\theoremstyle{plain} 
\newtheorem{theorem}{\indent\sc Theorem}[section]
\newtheorem{lemma}[theorem]{\indent\sc Lemma}
\newtheorem{corollary}[theorem]{\indent\sc Corollary}
\newtheorem{proposition}[theorem]{\indent\sc Proposition}
\newtheorem{claim}[theorem]{\indent\sc Claim}
\theoremstyle{definition} 
\newtheorem{definition}[theorem]{\indent\sc Definition}
\newtheorem{remark}[theorem]{\indent\sc Remark}
\newtheorem{example}[theorem]{\indent\sc Example}
\newcommand{\R}{\mathbb{R}}
\newcommand{\N}{\mathbb{N}}
\def\dim{\mathop{\mathrm{dim}}\nolimits}
\def\Im{\mathop{\mathrm{Im}}\nolimits}
\newcommand{\abs}[1]{\left\lvert#1\right\rvert}
\newcommand{\diam}{\mathrm{diam}}
\newcommand{\ngamma}{\mathtt{n}\gamma}
\crefname{theorem}{Theorem}{Theorems}
\crefname{lemma}{Lemma}{Lemmata}
\crefname{corollary}{Corollary}{Corollaries}
\crefname{proposition}{Proposition}{Propositions}
\crefname{remark}{Remark}{Remarks}
\crefname{example}{Example}{Examples}
\crefname{section}{Section}{Sections}
\crefname{definition}{Definition}{Definition}
\crefname{equation}{}{}
\crefname{claim}{Claim}{Claims}
\newcommand{\calO}{\mathcal{O}}
\newcommand{\OdX}{\calO \partial X}
\newcommand{\rp}[1]{\textup{\texttt{rp}}#1}
\newcommand{\Rp}{\R_{\geq 0}}
\newcommand{\ds}[1]{d_X(#1)}
\newcommand{\DA}{\Omega}
\newcommand{\rest}{\!\!\upharpoonright}
\newcommand{\rpgm}{\rp{\bar{\gamma}}}
\newcommand{\starNb}{\mathsf{star}}
\newcommand{\augn}[2]{A_{#2}(#1)}
\newcommand{\aug}[1]{A(#1)}
\newcommand{\Horo}{\mathcal{H}}
\newcommand{\cbHoro}{\mathcal{H}_\mathsf{comb}}
\newcommand{\limone}{\varprojlim{\!}^1}
\newcommand{\cd}{\operatorname{cd}}
\begin{document}

\title[Boundary of free products of metric spaces]{Boundary of free products of metric spaces} 

\author[T. Fukaya]{Tomohiro Fukaya$^*$} 

\author[T. Matsuka]{Takumi Matsuka} 



\renewcommand{\thefootnote}{\fnsymbol{footnote}}
\footnote[0]{2020\textit{ Mathematics Subject Classification}.
Primary 51F30; Secondary 20F65, 58B34.}

\keywords{ 
coarsely convex spaces, ideal boundary, cohomology, 
}

\thanks{ 
The first author was supported by JSPS KAKENHI Grant number JP19K03471. 
The second author was supported by JST, the establishment of university fellowships towards the creation of science technology innovations, Grant number JPMJFS2139.
}
\address{ Tomohiro Fukaya \endgraf
Department of Mathematics and Information Sciences,
Tokyo Metropolitan University,
Minami-osawa Hachioji, Tokyo, 192-0397, Japan
}
\email{tmhr@tmu.ac.jp}

\address{  Takumi Matsuka \endgraf
Department of Mathematics and Information Sciences,
Tokyo Metropolitan University,
Minami-osawa Hachioji, Tokyo, 192-0397, Japan
}
\email{takumi.matsuka1@gmail.com}


\begin{abstract}
In this paper, we compute (co)homologies of ideal boundaries of
free products of geodesic coarsely convex spaces in terms of those of each of the components. The (co)homology theories we consider are,
$K$-theory, 
Alexander-Spanier cohomology, 
$K$-homology, and
Steenrod homology.
These computations led to the computation of $K$-theory of the Roe algebra of free products of geodesic coarsely convex spaces via
the coarse Baum--Connes conjecture.
\end{abstract}

\maketitle

\section{Introduction} 
Coarsely convex spaces are coarse geometric analogues of 
simply connected Riemannian manifolds of nonpositive sectional curvature
introduced by Oguni and the first author. 
In \cite{FO-CCH}, 
ideal boundaries of coarsely convex spaces are constructed, and 
it was shown that coarsely convex spaces are coarsely homotopy equivalent
to the open cones over their ideal boundaries. Combining with 
Higson-Roe's work on the coarse Baum-Connes conjecture for open 
cones \cite{MR1388312}, it was shown that the coarse Baum-Connes
conjecture holds for proper coarsely convex spaces \cite{FO-CCH}.

The class of coarsely convex spaces includes geodesic Gromov hyperbolic 
spaces, CAT(0) spaces, Busemann spaces, 
and systolic complexes~\cite{bdrySytolic}. 
Descombes and Lang~\cite{convex-bicomb} showed that proper injective
metric spaces admit geodesic convex bicombings. This implies that
proper injective metric spaces are coarsely convex. 
Recently, 
Chalopin, Chepoi, Genevois, Hirai, and Osajda~\cite{chalopin2020helly-arXiv}
showed that Helly graphs are coarsely dense in their injective hulls, 
and these injective hulls are proper. It follows that
Helly graphs are coarsely convex.


In \cite{fukaya2023free}, we constructed free products of metric spaces,
which generalizes the construction by 
Bridson-Haefliger~\cite[Theorem II.11.18]{MR1744486}.
We showed that free products of geodesic coarsely convex spaces are
geodesic coarsely convex. In \cite{gcc-pair-preparation}, 
the authors introduced trees of spaces, 
which are generalizations of free products of metric spaces.
They showed that if each of the components of a tree of spaces is 
geodesic coarsely convex, then the tree of spaces is geodesic 
coarsely convex.
This generalized the main result of \cite{fukaya2023free}
for trees of spaces.

In this paper, we study the topology of ideal boundaries of trees of
spaces whose components are coarsely convex. For this purpose, we
introduce augmented spaces of trees of spaces. We determine
the topological type of the ideal boundaries of augmented spaces.
Then we compute $K$-theory, Alexander-Spanier cohomology, 
and $K$-homology of the ideal boundaries of the trees of spaces
in terms of ideal boundaries of augmented spaces, 
and ideal boundaries of the components of the trees of spaces.

By applying \cref{thm:cohomology,thm:homology} for free products,
we have the following.
\begin{theorem}
\label{thm:cohomology_free_prod}
 Let $X$ and $Y$ be proper geodesic coarsely convex spaces with nets. 
 Suppose that the net of $X$ and that of $Y$ are coarsely dense
 in $X$ and $Y$, respectively, and, both $X$ and $Y$ are unbounded.
 Let $X * Y$ be the free product of $X$ and $Y$.
 Let $\partial X$, $\partial Y$, and $\partial (X*Y), $ denote
 the ideal boundaries of $X$, $Y$, and $X*Y$, respectively.
 Let $\mathcal{C}$ denote the Cantor space.

 Let $M^*=(M^n)_{n\in \N}$ be the $K$-theory or the 
 Alexander-Spanier cohomology. Then,
 \begin{align*}
 \tilde{M}^p(\partial (X * Y)) 
 &\cong \tilde{M}^p(\mathcal{C}) \oplus 
  \bigoplus_{\N} \tilde{M}^p(\partial X)
  \oplus \bigoplus_{\N} \tilde{M}^p(\partial Y)
 \end{align*}

 Let $M_*=(M_n)_{n\in \N}$ be the $K$-homology or the Steenrod homology.
 Then,
\begin{align*}
  \tilde{M}_p(\partial (X * Y)) 
 &\cong \tilde{M}_p(\mathcal{C}) \times
  \prod_{\N} \tilde{M}_p(\partial X)
  \times
  \prod_{\N} \tilde{M}_p(\partial Y).
\end{align*} 
Here $\tilde{M}_p$ and $\tilde{M}^p$ denote 
the reduced (co)homology corresponding to $M_p$ and $M^p$.
\end{theorem}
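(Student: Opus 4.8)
The plan is to recognize $X*Y$ as a tree of spaces and then read the answer off from the general computations \cref{thm:cohomology} and \cref{thm:homology}; the only input specific to the free product will be the combinatorics of the underlying tree together with the homeomorphism type of the ideal boundary of the associated augmented space.

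\emph{Step 1 (the tree of spaces).} First I would recall that $X*Y$ is a tree of spaces $Z$ over a tree $T$ whose vertex set splits as $V(T)=V_X\sqcup V_Y$, with the component over a vertex in $V_X$ an isometric copy of $X$ and the component over a vertex in $V_Y$ an isometric copy of $Y$ (this is essentially the construction of \cite{fukaya2023free,gcc-pair-preparation}). Here I would note that $V_X$ and $V_Y$ are countably infinite: a uniformly discrete subset of a proper metric space is countable, and if such a subset is coarsely dense in an unbounded space it is infinite, so the given nets of $X$ and $Y$ are countably infinite; since in the free product construction the neighbours of a component of $T$ are indexed by the net of that component, each vertex of $T$ has countably infinitely many neighbours, and $V_X$, $V_Y$ are countable unions of countable sets, hence countably infinite. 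Thus, up to isometry, the family of components of $Z$ consists of one copy of $X$ for each element of $V_X\cong\N$ and one copy of $Y$ for each element of $V_Y\cong\N$.

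\emph{Step 2 (invoking the general theorems).} Next I would apply \cref{thm:cohomology} to $Z=X*Y$: for $M^*$ the $K$-theory or the Alexander--Spanier cohomology it produces a natural isomorphism of $\tilde M^p(\partial Z)$ with the direct sum of $\tilde M^p(\partial A)$ --- the reduced cohomology of the ideal boundary of the augmented space $A$ of $Z$ --- and the reduced cohomologies of the ideal boundaries of all the components; by Step 1 this latter sum is $\bigoplus_{\N}\tilde M^p(\partial X)\oplus\bigoplus_{\N}\tilde M^p(\partial Y)$. Applying \cref{thm:homology} likewise handles $M_*$ the $K$-homology or the Steenrod homology; the single change is that these theories are additive over the relevant strong wedge of compacta with a direct product in place of a direct sum, so that $\tilde M_p(\partial Z)\cong\tilde M_p(\partial A)\times\prod_{\N}\tilde M_p(\partial X)\times\prod_{\N}\tilde M_p(\partial Y)$. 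After this, all that remains is to identify $\partial A$ with the Cantor space $\mathcal{C}$.

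\emph{Step 3 (the boundary of the augmented space).} This is the step I expect to be the main obstacle. The augmented space $A$ is a proper geodesic coarsely convex space that is unbounded (since $X$, say, is), so $\partial A$ is a nonempty compact metrizable space by \cite{FO-CCH}; by Brouwer's characterization of the Cantor space it then suffices to prove that $\partial A$ is perfect and totally disconnected. For this I would specialize to $X*Y$ the description of the ideal boundary of the augmented space of a tree of spaces obtained earlier in the paper, according to which $\partial A$ is homeomorphic to the space of ends of the tree $T$. By Step 1 every vertex of $T$ has infinitely many neighbours, so $T$ branches at every vertex; hence its end space is ultrametrizable, which is totally disconnected, and has no isolated point, which is perfectness. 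This is exactly where the hypothesis that $X$ and $Y$ are \emph{both} unbounded enters: it excludes degenerate underlying trees --- for instance a ray or a finite tree --- whose end space would be a single point or a finite set rather than $\mathcal{C}$. Combining Steps 2 and 3 gives the two displayed isomorphisms.
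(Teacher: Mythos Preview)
Your Steps 1 and 2 are exactly the paper's strategy: recognize $X*Y$ as a tree of geodesic coarsely convex spaces, apply \cref{thm:cohomology} and \cref{thm:homology}, and note that the index set $K$ splits into countably many copies of $X$ and countably many copies of $Y$. The only thing left is the identification $\partial\aug{X*Y}\cong\mathcal{C}$, and this is where your argument diverges from the paper and contains a genuine error.

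You assert that the paper identifies $\partial A$ with the end space of the underlying tree $T$. It does not, and in fact this identification is false. A geodesic ray in the augmented space $A$ can do one of two things: it can pass through infinitely many components (and then it determines an end of $T$), or it can eventually enter a single horoball $\Horo(X_k)$ and head straight up, converging to the center $c_k$. Thus $\partial A$ contains, in addition to points coming from $\mathrm{Ends}(T)$, one extra point $c_k$ for every $k\in K$; these centers are dense in $\partial A$ but are not ends of $T$. (If you want an end-space description, you would need a different tree $T'$, namely $T$ with an infinite ray glued at each vertex of $K$, so that the attached rays account for the $c_k$'s.)

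The paper's actual argument for Step 3 is \cref{thm:Cantor}. \Cref{lem:cble-fam,lem:clopen} show directly that $\partial\aug{Z}$ has a countable basis of clopen sets, hence is zero-dimensional, and \cref{lem:c_k-isolated} identifies the isolated points of $\partial\aug{Z}$ as precisely those $c_k$ for which $X_k\cap\xi^{-1}(L^\circ)$ is bounded. In the free product, every $l\in L$ is adjacent to two vertices of $K$, so $L^\circ=L$, and $X_k\cap\xi^{-1}(L)$ is the image of the net in that copy of $X$ or $Y$; since the nets are coarsely dense and both $X$ and $Y$ are unbounded, these sets are unbounded, so there are no isolated points. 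Brouwer's theorem then gives $\partial\aug{X*Y}\cong\mathcal{C}$. Your intuition about where the unboundedness hypothesis enters is correct, but the mechanism is this isolated-point criterion rather than branching of $T$.
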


Let $Y$ be a metric space. We denote by $C^*(Y)$ 
the Roe algebra of $Y$.
The coarse Baum-Connes conjecture 
states that the $K$-theory of the Roe algebra $C^*(Y)$
is isomorphic to the coarse $K$-homology $KX_*(Y)$ of $Y$.
As mentioned above, it is proved in \cite{FO-CCH} that 
the coarse Baum-Connes conjecture holds for proper coarsely convex spaces.
It is also proved that the coarse $K$-homology of a coarsely convex space 
$Y$ is isomorphic to the reduced $K$-homology of the ideal boundary 
$\partial Y$ with sifting the degree by one \cite[Theorem 6.7.]{FO-CCH}. Therefore we can compute the $K$-theory of the Roe algebra of a free product coarsely convex spaces as follows.


For a $C^*$-algebra $A$, we denote by $K_p(A)$ the operator $K$-theory of $A$. 
We denote by $\tilde{K}_p(-)$ the reduced $K$-homology.
\begin{theorem}
\label{thm:K-Roe-alg-bdry}
 Let $X$ and $Y$ be proper geodesic coarsely convex spaces with nets. 
 Suppose that the net of $X$ and that of $Y$ are coarsely dense
 in $X$ and $Y$, respectively, and, both $X$ and $Y$ are unbounded.
 We have
 \begin{align*}
  K_p(C^*(X * Y)) 
 &\cong \tilde{K}_{p-1}(\mathcal{C}) \times
  \prod_{\N} \tilde{K}_{p-1}(\partial X)
  \times
  \prod_{\N} \tilde{K}_{p-1}(\partial Y).
\end{align*} 
\end{theorem}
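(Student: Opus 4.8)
The plan is to obtain the formula as a composition of three isomorphisms, all of whose ingredients are already available in the excerpt: the coarse Baum--Connes conjecture for proper coarsely convex spaces \cite{FO-CCH}, the identification \cite[Theorem 6.7.]{FO-CCH} of the coarse $K$-homology of a coarsely convex space with the reduced $K$-homology of its ideal boundary (with the degree shifted by one), and the computation of $\tilde K_*(\partial(X*Y))$ furnished by \cref{thm:cohomology_free_prod}.

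First I would recall from \cite{fukaya2023free} that the free product $X*Y$ of two proper geodesic coarsely convex spaces is again proper, geodesic, and coarsely convex, and that the hypotheses on $X$ and $Y$ (unbounded, equipped with coarsely dense nets) are exactly the running hypotheses of \cref{thm:cohomology_free_prod}, so nothing further needs to be assumed. Since $X*Y$ is proper coarsely convex, \cite{FO-CCH} gives
\[
 K_p(C^*(X*Y)) \cong KX_p(X*Y).
\]
Applying \cite[Theorem 6.7.]{FO-CCH} to $Z = X*Y$ yields
\[
 KX_p(X*Y) \cong \tilde K_{p-1}(\partial(X*Y)).
\]
Finally, invoking the homology half of \cref{thm:cohomology_free_prod} with $M_*$ taken to be $K$-homology identifies the right-hand side:
\[
 \tilde K_{p-1}(\partial(X*Y)) \cong \tilde K_{p-1}(\mathcal{C}) \times \prod_{\N} \tilde K_{p-1}(\partial X) \times \prod_{\N} \tilde K_{p-1}(\partial Y).
\]
Composing the three isomorphisms gives the theorem.

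The only genuine care needed is bookkeeping: one must check that the degree-shift convention in \cite[Theorem 6.7.]{FO-CCH} is consistent with the grading used in \cref{thm:cohomology_free_prod}, so that exactly one shift by one occurs and the answer lands in degree $p-1$; and one must confirm that $X*Y$ really satisfies the hypotheses of \cref{thm:cohomology_free_prod}, which reduces to the facts that a free product of unbounded spaces is unbounded and that coarse density of the nets in $X$ and $Y$ passes to a coarsely dense net in $X*Y$ — both immediate from the construction of the free product in \cite{fukaya2023free}. No new mathematical obstacle arises here; the substantive work is entirely contained in the proofs of \cref{thm:homology} and of the coarse Baum--Connes conjecture for coarsely convex spaces.
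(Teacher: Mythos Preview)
Your proof is correct and follows essentially the same route as the paper. The paper factors the argument through the more general tree-of-spaces statement (\cref{thm:K-Roe-alg-bdry-tree}), proving that first by combining the coarse Baum--Connes isomorphism and the transgression map with \cref{thm:homology}, and then specializing to free products via \cref{thm:Cantor}; you instead apply the Baum--Connes and transgression isomorphisms directly to $X*Y$ and invoke the already-specialized \cref{thm:cohomology_free_prod}, which amounts to the same composition in a different order. One minor remark: the hypotheses of \cref{thm:cohomology_free_prod} are on $X$ and $Y$, not on $X*Y$, and are identical to those assumed here, so the verification you mention at the end is vacuous.
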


\begin{corollary}
 \label{cor:K-Roe-alg}
 Let $X$ and $Y$ be proper geodesic coarsely convex spaces with nets
 satisfying the condition of \cref{thm:K-Roe-alg-bdry}.
 We have
 \begin{align*}
  K_p(C^*(X * Y)) 
 &\cong \tilde{K}_{p-1}(\mathcal{C}) \times
  \prod_{\N} K_{p}(C^*(X))
  \times
  \prod_{\N} K_{p}(C^*(Y)).
\end{align*} 
\end{corollary}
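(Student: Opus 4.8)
The plan is to derive \cref{cor:K-Roe-alg} from \cref{thm:K-Roe-alg-bdry} by simply rewriting each reduced $K$-homology factor on the right-hand side in terms of the $K$-theory of a Roe algebra. Concretely, I would invoke the coarse Baum--Connes conjecture for proper coarsely convex spaces (proved in \cite{FO-CCH}), which gives an isomorphism $K_p(C^*(Z)) \cong KX_p(Z)$ for any proper coarsely convex space $Z$, together with \cite[Theorem 6.7.]{FO-CCH}, which identifies the coarse $K$-homology $KX_p(Z)$ with the reduced $K$-homology $\tilde{K}_{p-1}(\partial Z)$ of the ideal boundary shifted in degree by one. Composing these two isomorphisms yields $\tilde{K}_{p-1}(\partial Z) \cong K_p(C^*(Z))$, natural enough that it applies simultaneously to $Z = X$ and $Z = Y$. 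Substituting these into the formula of \cref{thm:K-Roe-alg-bdry} for each of the countably many product factors produces the claimed isomorphism.

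The key steps, in order, are: (i) record that $X$ and $Y$ are proper coarsely convex spaces, so the hypotheses of \cite{FO-CCH} apply to each; (ii) apply the coarse Baum--Connes isomorphism and the boundary identification to get $\tilde{K}_{p-1}(\partial X) \cong K_p(C^*(X))$ and $\tilde{K}_{p-1}(\partial Y) \cong K_p(C^*(Y))$; (iii) observe that an isomorphism of abelian groups $A_n \cong B_n$ for each $n$ induces an isomorphism $\prod_{\N} A_n \cong \prod_{\N} B_n$ of the countable direct products, so the two product factors transform as desired; and (iv) leave the Cantor-space factor $\tilde{K}_{p-1}(\mathcal{C})$ untouched, since it already appears in the statement of \cref{cor:K-Roe-alg} in the same form. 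Assembling (iii) and (iv) with \cref{thm:K-Roe-alg-bdry} gives the result.

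I do not expect a serious obstacle here; this corollary is a formal consequence of the preceding theorem and cited results. The only point that warrants a sentence of care is the behavior of reduced $K$-homology and $K$-theory under the countably infinite products: one should note that these are genuine categorical products in the relevant category of abelian groups (or, in the $C^*$-setting, that $K$-theory need not commute with infinite products of $C^*$-algebras, which is precisely why the statement is phrased as a product of $K$-groups rather than the $K$-group of a product algebra — so no such interchange is being claimed). Once that is clear, the proof is a one-line substitution.

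\begin{proof}
 Since $X$ and $Y$ are proper geodesic coarsely convex spaces, the coarse
 Baum--Connes conjecture holds for each of them \cite{FO-CCH}, so
 $K_p(C^*(X)) \cong KX_p(X)$ and $K_p(C^*(Y)) \cong KX_p(Y)$. By
 \cite[Theorem 6.7.]{FO-CCH}, the coarse $K$-homology of a coarsely convex
 space is isomorphic to the reduced $K$-homology of its ideal boundary with
 the degree shifted by one; hence
 \begin{align*}
  \tilde{K}_{p-1}(\partial X) &\cong KX_p(X) \cong K_p(C^*(X)), &
  \tilde{K}_{p-1}(\partial Y) &\cong KX_p(Y) \cong K_p(C^*(Y)).
 \end{align*}
 An isomorphism of abelian groups in each coordinate induces an isomorphism
 of the corresponding countable direct products, so
 $\prod_{\N} \tilde{K}_{p-1}(\partial X) \cong \prod_{\N} K_p(C^*(X))$ and
 $\prod_{\N} \tilde{K}_{p-1}(\partial Y) \cong \prod_{\N} K_p(C^*(Y))$.
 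Substituting these into the isomorphism of \cref{thm:K-Roe-alg-bdry} and
 leaving the factor $\tilde{K}_{p-1}(\mathcal{C})$ unchanged yields
 \begin{align*}
  K_p(C^*(X * Y))
 &\cong \tilde{K}_{p-1}(\mathcal{C}) \times
  \prod_{\N} K_{p}(C^*(X))
  \times
  \prod_{\N} K_{p}(C^*(Y)),
 \end{align*}
 as claimed.
\end{proof}
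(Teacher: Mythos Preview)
Your proof is correct and follows the same approach as the paper: the paper packages the isomorphism $K_*(C^*(V))\cong \tilde{K}_{*-1}(\partial V)$ as \cref{eq:K-Roe-b-map} (combining the coarse assembly map and the transgression map from \cite{FO-CCH}) and then substitutes it into each factor of \cref{thm:K-Roe-alg-bdry}, exactly as you do. The only cosmetic difference is that the paper first states the tree-of-spaces version (\cref{cor:K-Roe-alg-tree}) and then specializes to free products, but the argument is identical.
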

It might be interesting to compare \cref{cor:K-Roe-alg} with a formula for 
$K$-theories of the group $C^*$ algebras of free products of groups 
\cite[10.11.11 (f) to (h)]{MR1656031}.

We also study the topological dimensions of the ideal boundaries
of trees of spaces. Applying \cref{thm:cd-freeprod-grp} for free products
of metric spaces, we obtain a formula of the dimension.

\begin{theorem}
\label{thm:dim_free-pd}
 Let $X$ and $Y$ be geodesic coarsely convex spaces with nets. 
 \begin{align*}
  \dim (\partial (X*Y)) = \max\{\dim \partial X, \dim \partial Y\}.
 \end{align*}
\end{theorem}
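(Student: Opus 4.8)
The plan is to deduce this from the general result for trees of spaces (the stated \cref{thm:cd-freeprod-grp}, which I am assuming gives a formula for $\dim$ of the ideal boundary of a tree of spaces in terms of the dimensions of the boundaries of its components and the augmented spaces) together with the topological description of the ideal boundary of the free product. The first step is to recall from the analysis of augmented spaces that the ideal boundary $\partial(X*Y)$ is built, up to homeomorphism, out of a Cantor set $\mathcal{C}$ worth of copies of $\partial X$ and $\partial Y$ attached along a tree-like pattern; more precisely, by \cref{thm:cohomology_free_prod}'s underlying geometric input, $\partial(X*Y)$ contains a dense open subset that is a disjoint union of countably many copies of $\partial X$ and $\partial Y$, with the Cantor set appearing as the set of ``ends'' of the Bass--Serre-type tree along which the copies are glued, and the copies shrink in diameter as one moves out along the tree.

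Granting that picture, the inequality $\dim(\partial(X*Y)) \geq \max\{\dim\partial X, \dim\partial Y\}$ is immediate, since $\partial(X*Y)$ contains homeomorphic copies of both $\partial X$ and $\partial Y$ as (closed) subspaces, and covering dimension is monotone under passage to closed subspaces of a normal (here compact metrizable) space. For the reverse inequality $\dim(\partial(X*Y)) \leq \max\{\dim\partial X,\dim\partial Y\}$, I would use a countable closed-cover argument: write $\partial(X*Y)$ as a countable union of closed sets, namely the closures of the individual copies of $\partial X$ and $\partial Y$ together with the Cantor set $\mathcal{C}$ of tree-ends, and then invoke the countable sum theorem for covering dimension (valid for metrizable, indeed for totally normal, spaces). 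Each closed piece coming from a copy of $\partial X$ or $\partial Y$ has dimension $\leq\max\{\dim\partial X,\dim\partial Y\}$; the Cantor set $\mathcal{C}$ has dimension $0$; hence the countable sum theorem gives $\dim(\partial(X*Y))\le\max\{\dim\partial X,\dim\partial Y\}$, provided $\max\{\dim\partial X,\dim\partial Y\}\ge 0$, which holds since both spaces are nonempty (as $X$ and $Y$ are unbounded). The degenerate cases where one of $\partial X$, $\partial Y$ is empty, or where a dimension is $0$, are handled separately but cause no trouble.

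The main obstacle is making the closed-cover rigorous: one must check that the decomposition of $\partial(X*Y)$ into the closures of the branch-copies plus $\mathcal{C}$ is genuinely a \emph{countable closed cover by sets of the claimed dimension}, i.e. that the closure of a copy of $\partial X$ sitting at a vertex of the tree does not pick up extra dimension from the points it limits onto (points of $\mathcal{C}$ or points of adjacent copies). This is where one needs the quantitative control from the augmented-space construction: the diameters of the copies indexed by longer and longer words tend to $0$, so the closure of a single copy adds at most a point (or a small controlled set) from $\mathcal{C}$, and dimension is not increased. Concretely I would organize this by indexing the pieces by finite reduced words $w$ in the free product, letting $F_w$ be the closed subset of $\partial(X*Y)$ consisting of boundary points whose geodesic rays eventually stay in the $w$-translate together with its limit point, verifying $\dim F_w\le\max\{\dim\partial X,\dim\partial Y\}$ from the homeomorphism type of $F_w$, and then applying the countable sum theorem to $\{F_w\}_w\cup\{\mathcal{C}\}$, which is a countable closed cover of $\partial(X*Y)$. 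This should reduce the whole statement to the cited \cref{thm:cd-freeprod-grp}; if that theorem is stated for trees of spaces in exactly the form ``$\dim$ of the boundary $=\max$ of $\dim$ of the components' boundaries and the augmented space's boundary'', then Theorem \ref{thm:dim_free-pd} follows once we note the augmented-space boundary here is $\mathcal{C}$, of dimension $0$, and the argument above is simply the unwinding of that specialization.
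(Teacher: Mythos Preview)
Your closing remark is exactly right: the paper simply applies the general tree-of-spaces result (Theorem~\ref{thm:dim_delZ}, which states $\dim\partial Z=\sup_k\dim\partial X_k$; the reference to \ref{thm:cd-freeprod-grp} in the introduction is a mislabel) to the free product, and nothing more is needed. So if you had written only that last paragraph, the proof would match the paper's.

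The bulk of your proposal, however, attempts a different direct argument via the countable sum theorem on $\partial(X*Y)$, and this has a real gap. You assert that $\{F_w\}_w\cup\{\mathcal{C}\}$ is a countable \emph{closed} cover of $\partial(X*Y)$, but the set $\mathcal{C}$ of tree-end points is \emph{not} closed in $\partial(X*Y)$. Indeed, take any $p\in\partial X_k$ and follow the geodesic $\gamma_e^p$; since the net is coarsely dense in $X_k$, there are branch points $v_n\in X_k$ with $d(e,v_n)\to\infty$ along this ray, and at each $v_n$ one can branch off into another component and then out to a tree end $x_n$. These tree ends satisfy $(x_n\mid p)_e\to\infty$, so $x_n\to p\notin\mathcal{C}$. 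Thus $\mathcal{C}$ is dense (not closed) whenever the components are unbounded with coarsely dense nets, and the countable sum theorem does not apply to your decomposition. Taking closures does not help, since $\overline{\mathcal{C}}=\partial(X*Y)$.

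The paper's route around this obstruction is worth noting: rather than decomposing $\partial Z$ itself, it writes $\partial Z=\varprojlim_n\partial A_n(Z)$ and bounds each finite stage. In $\partial A_n(Z)$ only finitely many $\partial X_{k(i)}$ appear, and the complement of their union (the analogue of your $\mathcal{C}$) is then genuinely clopen (Lemma~\ref{lem:upper_estimate} and the argument of Section~\ref{sec:ideal-bound-augm}), so a finite-cover refinement argument goes through at each stage; the projective-limit dimension inequality then finishes. This is exactly the step your direct approach is missing.
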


We remark that \cref{thm:dim_free-pd} is an analogue of 
the formula for the cohomological dimensions of the 
free products of groups.
Namely, in the setting of \cref{thm:dim_free-pd},
if we assume that both $X$ and $Y$ admits geometric action by
group $G$ and $H$, respectively, and both $G$ and $H$ admits
a finite model for the classifying space $BG$ and $BH$, respectively,
then the above formula follows from a well-known formula
on the cohomological dimensions of free products of groups.
See \cref{sec:relat-betw-cohom} for details.

\subsection{Outline}
In \cref{sec:comb-metr-horob}, based on the work of Groves and Manning,
we introduce combinatorial horoballs for metric spaces with lattices, and study 
the shape of geodesics in them.

In \cref{sec:coars-conv-bicomb,sec:ideal-boundary,sec:geod-coars-conv}, 
we review the coarsely convex bicombings and the construction of ideal boundaries.
In this paper, we only deal with geodesic coarsely convex space, so
in \cref{sec:geod-coars-conv}, we restrict results in previous sections 
to geodesic coarsely convex bicombings, and we improve some of them.

In \cref{sec:cont-at-infin}, we prepare some technical lemmata to show 
the continuity of certain retractions on ideal boundaries, 
which play key roles in the computations of (co)homologies.

In \cref{sec:trees-spaces,sec:augmented-space}, we introduce trees of spaces and 
augmented spaces. We construct geodesic coarsely convex bicombings on them assuming 
that each component admits such bicombings.

In \cref{sec:cohomology-ideal-boundary-tree}, we compute (co)homologies of ideal boundaries
of trees of geodesic coarsely convex spaces by using Mayer-Vietoris exact sequences.
Key tools are retractions on ideal boundaries.
We give proofs of \cref{thm:cohomology_free_prod,thm:K-Roe-alg-bdry,cor:K-Roe-alg-tree}. 

In \cref{sec:constr-retarct}, we construct the retractions 
used in \cref{sec:cohomology-ideal-boundary-tree} and show that they are continuous.
Here we use the results in \cref{sec:cont-at-infin}.

In \cref{sec:ideal-bound-augm}, we study the topological type of the ideal boundary
of the augmented spaces. 
We show (\cref{thm:Cantor}) that under certain conditions, the ideal boundaries of
the augmented spaces are homeomorphic to the Cantor space.

In \cref{sec:topol-dimens-ideal}, we show a formula for the topological dimensions of 
the ideal boundaries of trees of geodesic coarsely convex spaces in terms of 
the dimensions of each component. In \cref{sec:relat-betw-cohom}, we compare these
results with a well known formula for the cohomological dimensions of the free products
of groups.

\section{Combinatorial and metric horoballs}
\label{sec:comb-metr-horob}
All materials in this section are slight modifications of the work of Groves and Manning
\cite{MR2448064} on the combinatorial horoballs.

In the rest of this section, we assume that $X$ is a geodesic metric
space.

\begin{definition}
 Let $(X,d_X)$ be a metric space and $A\subset X$. Let $C\geq 0$.
 \begin{enumerate}[label=(\arabic*)]
  \item The subset $A$ is $C$-discrete if for all $a,a'\in A$ with $a\neq a'$, we have
        $d_Z(a,a')\geq C$.
  \item The subset $A$ is $C$-dense if for all $x\in X$, there exists $a\in A$ such that
        $d_Z(x,a)\leq C$.
 \end{enumerate}
\end{definition}

\begin{remark}
 A maximal $C$-discrete subset $A\subset X$ is $2C$-dense.
\end{remark}

\begin{definition}
\label{def:lattice}
 A metric space with a lattice is a pair $(X,X^{(0)})$ of metric space $(X,d_X)$ and
 1-discrete 2-dense subset $X^{(0)}\subset X$. 
 We call $X^{(0)}$ \emph{the lattice of} $X$.
\end{definition}

Groves and Manning defined an augmented space and introduced an equivalent definition of relatively hyperbolic groups \cite{MR2448064}.
The augmented space is constructed by gluing a ''combinatorial horoballs'' based on each $P \in \mathcal{P}$ onto the Cayley graph of $G$. 
They defined that $G$ is hyperbolic relative to $\mathcal{P}$ if the augmented space is hyperbolic. 
\par
The construction presented here is closely modeled on Groves–Manning’s combinatorial horoballs. 
They constructed horoballs based on an arbitrary connected graph, 
and showed that such a horoball is always $\delta$–hyperbolic. 
A minor modification of the Groves–Manning construction produces connected horoballs based on any metric space. 

\begin{definition}
\label{def:conb-horob}
Let $(X^{(0)},d_X)$ be a 1-discrete metric space.
A \textit{combinatorial horoball} $\cbHoro(X^{(0)})$ is a graph
with a set of vertices $\cbHoro(X^{(0)})^{(0)}$ and a set of edges
$\cbHoro(X^{(0)})^{(1)}$ defined as follows:
\begin{enumerate}
    \item $\cbHoro(X^{(0)})^{(0)} \coloneqq X^{(0)} \times (\mathbb{N} \cup \{0\})$.
    \item $\cbHoro(X^{(0)})^{(1)}$ contains the following two types of edges:
    \begin{enumerate}
        \item For each $x \in X^{(0)}$ and $l \in \mathbb{N} \cup \{0\}$, 
              there exists a \textit{vertical edge} connecting $(x,l)$ and $(x,l+1)$.
        \item For each $x,y \in X^{(0)}$ and $l \in \mathbb{N}$, 
              if $0 < d_X(x,y) \leq 2^l$ holds for some $l \in \mathbb{N}$, 
              there exists a \textit{horizontal edge} connecting $(x,l)$ and $(y,l)$.
    \end{enumerate}
\end{enumerate}
We endow $(X^{(0)},d_X)$ with a graph metric such that
each edge has length one, and we consider $(X^{(0)},d_X)$ 
as a geodesic space.
\end{definition}

Let $\cbHoro(X^{(0)})$ be a combinatorial horoball.
A \textit{horizontal segment} is a path consisting only of horizontal edges.
A \textit{vertical segment} is a path consisting only of vertical edges. 
Each $x \in \cbHoro(X^{(0)})^{(0)}$ can be identified with $x=(x_0,l)$, where $x_0 \in X^{(0)}$ and $l \in \mathbb{N} \cup \{0\}$. 
We say that the depth of $x$ is $l$, denoted by $D(x)=l$. 
Moreover, let $p$ be a horizontal segment.
When the depth of a point on the horizontal segment $p$ is $l$,
we say that the depth of the horizontal segment $p$ is $l$, denoted by $D(p)=l$.


\begin{definition}
\label{def:metr-horob}
 Let $(X, X^{(0)})$ be a metric space with a lattice.
An \textit{metric horoball} $\Horo(X,X^{(0)})$ is a
quotient space of $X \sqcup \cbHoro(X^{(0)})$ by the
 equivalent relation generated by 
\begin{align*}
  \cbHoro(X^{(0)})^{(0)}\ni (x_0,0) \sim x_0\in X^{(0)}\subset X 
\end{align*} 
for any $x_0 \in X^{(0)}$.
\end{definition}

Geodesics in combinatorial horoballs are particularly easy to understand.

\begin{proposition}[\cite{MR2448064}, Lemma 3.10]
\label{prop:geod}
Let $x,y \in \Horo(X,X^{(0)})$.  
There exists a geodesic segment on $\Horo(X,X^{(0)})$ from $x$ to $y$ which consists of at most two vertical
segments and a single horizontal segment of length at most $5$.
We say that this geodesic segment is a \textit{normal geodesic segment}, denoted by $\ngamma(x,y)$. 
Moreover, for each geodesic segment from $x$ to $y$, denoted by $\gamma(x,y)$, we have
\begin{align*}
    d_H(\mathrm{Im}(\gamma(x,y)), \mathrm{Im}(\ngamma(x,y)))\leq 4
\end{align*}
,where $d_H$ is the Hausdorff metric on $X$.
\end{proposition}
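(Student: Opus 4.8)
The plan is to reduce the statement to the graph-theoretic analysis of geodesics in the combinatorial horoball $\cbHoro(X^{(0)})$, since a metric horoball $\Horo(X,X^{(0)})$ is obtained from $X$ by gluing on such a horoball along $X^{(0)}\times\{0\}\cong X^{(0)}$. First I would observe that it suffices to treat the case where $x$ and $y$ both lie in $\cbHoro(X^{(0)})$: if one or both of $x,y$ lies in $X\setminus X^{(0)}$, any geodesic from $x$ to $y$ in $\Horo(X,X^{(0)})$ that is not entirely contained in $X$ must pass through $X^{(0)}$, and one splices together a geodesic segment inside $X$ (which contributes nothing of horoball-type, i.e. depth $0$) with a geodesic segment inside $\cbHoro(X^{(0)})$; the normal geodesic segment is then defined by doing the same splicing with the normal segment in the horoball. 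So the core is the statement for the combinatorial horoball, which is exactly \cite[Lemma 3.10]{MR2448064}.

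For the horoball case I would follow Groves--Manning's argument. The key combinatorial fact is that if $x=(x_0,k)$ and $y=(y_0,k)$ are at the same depth $k$ and are joined by a horizontal edge (so $0<d_X(x_0,y_0)\le 2^k$), then going up one level makes them joinable by a horizontal edge there as well, since $d_X(x_0,y_0)\le 2^k\le 2^{k+1}$; hence one can always "lift" a long horizontal segment. Quantitatively, a horizontal path of combinatorial length $m\ge 2$ at depth $k$ can be replaced by: one vertical edge up, a horizontal path of length $\lceil m/2\rceil$ at depth $k+1$, and one vertical edge down — a net change of $2 - (m - \lceil m/2\rceil) = 2 - \lfloor m/2\rfloor$ in length, which is a strict decrease once $m\ge 4$. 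Iterating, any geodesic can be pushed to one that first goes monotonically up along vertical edges, then runs a horizontal segment of length at most $3$ (the bound $5$ in the statement being a safe over-count absorbing the endpoints' positions), then goes monotonically down; this is the normal geodesic segment $\ngamma(x,y)$. That it is genuinely a geodesic follows by checking its length equals $d(x,y)$, using that the above rewriting never increases length and that no further shortening is possible.

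For the Hausdorff-distance bound, I would argue that any geodesic $\gamma(x,y)$ must itself be close to "monotone-up then across then monotone-down" in shape: a geodesic cannot descend and then re-ascend (that would create a shortcut via a higher horizontal segment), so its depth profile is unimodal, reaching some minimum depth, and its single horizontal portion there has bounded length. Comparing the depth profile of $\gamma(x,y)$ with that of $\ngamma(x,y)$ — both are unimodal, start and end at $D(x),D(y)$, and their minimum depths differ by a bounded amount — one checks that at each parameter the two curves are within combinatorial distance $4$, and the horizontal excursions (each of length $\le 5$, hence diameter $\le 5$) are controlled as well, yielding $d_H(\Im\gamma,\Im\ngamma)\le 4$.

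The main obstacle I expect is the bookkeeping in the Hausdorff estimate: one has to control simultaneously (i) how much the minimum depth attained by $\gamma$ can differ from that attained by $\ngamma$, (ii) the fact that the horizontal segments occur at possibly different depths and different horizontal positions in $X^{(0)}$, and (iii) that at the "shoulders" where the curve transitions between vertical and horizontal one might accumulate a few extra units of distance. Since all of this is essentially contained in \cite{MR2448064}, and our construction differs from theirs only in replacing the underlying graph by an arbitrary $1$-discrete metric space $X^{(0)}$ (which does not affect any of the vertical/horizontal edge combinatorics), I would present the proof as a reduction to the horoball case plus a citation of \cite[Lemma 3.10]{MR2448064}, spelling out only the splicing step in detail.
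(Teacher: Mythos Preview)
Your strategy is sound and close to the paper's, but the lifting arithmetic slips. Your own formula gives net change $2-\lfloor m/2\rfloor$, which is zero for $m\in\{4,5\}$ and first becomes negative at $m=6$, not $m=4$; hence the horizontal segment in a normal geodesic has length at most $5$, and this is the sharp bound from the lifting argument, not a ``safe over-count''. Also, with the convention that depth increases into the horoball (horizontal edges at depth $l$ join points at $d_X$-distance $\le 2^l$), the unimodal depth profile of a geodesic reaches a \emph{maximum}, not a minimum.

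The paper's argument differs in organization. Instead of iteratively lifting, it decomposes an arbitrary geodesic into alternating vertical and horizontal pieces and proves two structural claims: any horizontal segment strictly below the maximal depth has length exactly $1$, and no subpath has the pattern $h_1\ast v_1\ast h_2\ast v_2$. These force the alternating decomposition to have at most three horizontal pieces, $v_1\ast h_1\ast v_2\ast h_2\ast v_3\ast h_3\ast v_4$, after which a single explicit rewrite at the top depth produces the normal form $u\ast h\ast v$; the bound $|h|\le 5$ is then obtained by exactly the one-step lift you describe (ruling out $|h|=6$). Your splicing reduction for endpoints in $X\setminus X^{(0)}$ is reasonable and is something the paper glosses over; neither you nor the paper spells out the Hausdorff-distance estimate in detail.
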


\begin{proof}
Let $\gamma(x,y)$ be a geodesic segment from $x$ to $y$. 
We can put 
\begin{align*}
    \gamma(x,y)=p_1 \ast p_2 \ast \cdots \ast p_n
\end{align*}
such that for all $i$, we have that
if $p_i$ is a horizontal segment,
then $p_{i+1}$ is a vertical segment,
and if $p_i$ is a vertical segment, 
then $p_{i+1}$ is a horizontal segment.
We can assume that $p_1$ and $p_n$ are vertical segments.
Let $s(p_i)$ be an initial point of $p_i$ and let $t(p_1)$ be a terminal point of $p_i$. 
Note that $t(p_i)=s(p_{i+1})$, $s(p_1)=x$, and $t(p_n)=y$. 
\par
First, it is easily shown that there does not exist a vertical segment $p$ such that $D(s(p)) > D(t(p))$. 
We will show that as a consequence of lemmas.

\begin{claim}\label{cla:form1}
Let $D_M$ be the maximum depth of horizontal segments which compose $\gamma(x,y)$.
If there exists a horizontal segment $p$ which composes $\gamma(x,y)$ such that $D(p) < D_M$, 
then the length of $p$ is $1$. 
\end{claim}

Let $p_i$ be a horizontal segment with $D(p_i) < D_M$. 
Suppose the length of $p_i$ is $2$.
Let $s(p_i)=(x_i, D(p_i))$ and $t(p_i)=(y_i, D(p_i))$.
Note that $p_{i+1}$ is a vertical segment and $t(p_i)=s(p_{i+1})$. 
We denote the depth of $t(p_{i+1})$ by $D_1$.
Since $D(p_i) < D_1$, 
there exists a horizontal edge from $(x_i, D_1)$ to $(y_i,D_1)$, denoted by $q_2$.
Let $q_1$ be the vertical segment from $s(p_i)=(x_i, D(p_i))$ to $(x_i,D_1)$.
The path $q_1 \ast q_2$ is a geodesic segment from $s(p_i)$ to $t(p_{i+1})$. 
The length of $q_1 \ast q_2$ is shorter than $p_i \ast p_{i+1}$.
This is a contradiction. 

\begin{claim}\label{cla:form2}
There does not exist a subgeodesic $h_1 \ast v_1 \ast h_2 \ast v_2$ such that $h_i$ are horizontal segments and $v_i$ are vertical segments. 
\end{claim}

Suppose that there exists a subgeodesic $h_1 \ast v_1 \ast h_2 \ast v_2$ such that $h_i$ are horizontal segments and $v_i$ are vertical segments. 
Let $D(h_1)=D_1$, $D(h_2)=D_2$, and $D(t(v_2))=D$. 
By \cref{cla:form1}, the length of $h_1$ and $h_2$ are $1$. 
We can put $s(h_1)=(x_1,D_1)$, $s(h_2)=(x_2,D_2)$, and $t(v_2)=(x_3,D)$
\begin{align*}
    \ds{x_1,x_3} 
    &\leq \ds{x_1,x_2}+ \ds{x_2,x_3} \\
    &\leq 2^{D_1} + 2^{D_2} \\
    &\leq 2^{D_2+1} \leq 2^{D}. 
\end{align*}
Therefore, there exits a horizontal edge $q_2$ from $(x_1,D)$ to $t(v_2)$.
Now, we define a path $q_1$ to be a vertical segment from $s(h_1)=(x_1, D_1)$ to $(x_1, D)$. 
The length of the path $q_1 \ast q_2$ is smaller than $h_1 \ast v_1 \ast h_2 \ast v_2$.
This is a contradiction. 



Therefore, a geodesic segment $\gamma(x,y)$ has the following form:
\begin{align*}
    \gamma(x,y)=v_1 \ast h_1 \ast v_2 \ast h_2 \ast v_3 \ast h_3 \ast v_4,
\end{align*}
where $v_i$ are vertical segments and $h_i$ are horizontal segments. 
Note that $h_2$ is the highest horizontal segment. 
By \cref{cla:form1} and \cref{cla:form2}, we have that the length of $h_i$ is $1$ for $i \in \{1,3\}$. 
Put $s(h_1)=(x,D(h_1))$ and $t(h_3)=(y,D(h_3))$. 
We denote by $v'_1$, a vertical segment from $(x,D(h_1))$ to $(x,D_M)$.
and denote by $v'_2$, a vertical segment from $(y,D_M)$ and $t(h_3)=(y,D(h_3))$. 
There exists a geodesic segment from $(x,D_M)$ to $(y,D_M)$, denoted by $h_M$.
We replace $h_1 \ast v_2 \ast h_2 \ast v_3 \ast h_3$ with $v'_1 \ast h_M \ast v'_2$. 
The replaced path is also a geodesic segment from $x$ to $y$. 
Therefore, we obtain a normal geodesic segment from $x$ to $y$. 
\par
For $x,y \in \Horo(X,X^{(0)})$, 
let $\ngamma(x,y)=u \ast h \ast v$, where $u$ and $v$ are vertical segments 
and $h$ is a horizontal segment. 
Finally, we will show that the length of $h$ is at most $5$. 
Suppose that the length of $h$ is $6$.
Let $s(h)=(p,D(h))$ and $t(h)=(q,D(h))$. 
We can connect $(p,D(h)+1)$ and $(q,D(h)+1)$ with three horizontal edges.
Then, there exists a path of length $5$ connecting $s(h)$ and $t(h)$.
This is the contradiction. 
\par 
Moreover, for each geodesic segment from $x$ to $y$, denoted by $\gamma(x,y)$, we have
\begin{align*}
    d_H(\mathrm{Im}(\gamma(x,y)), \mathrm{Im}(\ngamma(x,y)))\leq 4. 
\end{align*}
This completes the proof.  
\end{proof}

\begin{proposition}[\cite{MR2448064}]
\label{prop:Horoball-hyp}
$\Horo(X,X^{(0)})$ is a Gromov hyperbolic space.
\end{proposition}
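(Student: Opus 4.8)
The plan is to show that $\Horo(X,X^{(0)})$ is $\delta$-hyperbolic for a universal constant $\delta$, imitating the Groves--Manning argument \cite{MR2448064} but exploiting the explicit description of geodesics already obtained in \cref{prop:geod}. Rather than verifying the four-point (Gromov product) inequality directly, I would verify that geodesic triangles are uniformly thin; since every pair of points is joined by a normal geodesic segment, it suffices to treat triangles all of whose sides are normal geodesic segments, and the slimness for arbitrary geodesics then follows from the Hausdorff bound $d_H(\Im(\gamma(x,y)),\Im(\ngamma(x,y)))\le 4$ in \cref{prop:geod}.

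The key steps, in order, are as follows. First, record the basic metric estimate in a combinatorial horoball: for $x_0,y_0\in X^{(0)}$ the distance in $\cbHoro(X^{(0)})$ between $(x_0,n)$ and $(y_0,n)$ is comparable to $2\log_2 d_X(x_0,y_0)$ (up to an additive constant and truncated below by $0$), because a horizontal edge at depth $l$ spans $X$-distance up to $2^l$, so to connect two points at $X$-distance $D$ one ascends to depth roughly $\log_2 D$, travels a bounded number of horizontal edges, and descends. More precisely, from \cref{prop:geod} the normal geodesic between $(x_0,n)$ and $(y_0,n)$ has the form (vertical up)$\ast$(horizontal of length $\le 5$)$\ast$(vertical down), with the horizontal part at depth $D_M$ where $D_M$ is essentially $n+\lceil\log_2 d_X(x_0,y_0)\rceil$ clamped appropriately. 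Second, given three points $x,y,z$, write down their pairwise normal geodesics and consider the ``highest'' horizontal segments $h_{xy},h_{yz},h_{xz}$ at depths $D_{xy},D_{yz},D_{xz}$. The point is that the three quantities $D_{xy},D_{yz},D_{xz}$ satisfy an ultrametric-type inequality: the smallest of the three $X$-distances governs the two geodesics that ``peel off'' together, so two of these depths differ by a bounded amount and the third is at least as deep. Third, using this, I would show that away from bounded neighborhoods of the three vertices, the three sides run through the same narrow band of vertices: below depth $\min(D_{xy},D_{yz},D_{xz})$ (roughly) two of the sides literally share vertical segments over the same points of $X^{(0)}$, and between the two relevant depth levels everything is within a bounded horizontal distance because the relevant points of $X^{(0)}$ are within $X$-distance $2^{D}$ of each other and hence within a bounded number of horizontal edges at depth $D$. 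Collecting these bounds yields a uniform slimness constant, and then invoking the $d_H \le 4$ comparison upgrades this to slimness of arbitrary geodesic triangles, proving hyperbolicity.

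The main obstacle I expect is step three, namely bookkeeping the case analysis for how two normal geodesics, emanating from a common point but to two different targets, fellow-travel: one must carefully compare their vertical portions and their (length $\le 5$) horizontal portions and bound the horizontal displacement between points at equal depth on the two sides. This is exactly the place where the explicit geodesic structure from \cref{prop:geod} does the heavy lifting, reducing an a priori delicate hyperbolicity estimate to a finite, if slightly tedious, combinatorial comparison; the constants that come out are crude but universal, which is all that is needed. A cleaner alternative, if the case analysis becomes unwieldy, is to verify the Gromov four-point condition directly using the distance formula from step one: for four points the inequality $\langle x,z\rangle \ge \min(\langle x,y\rangle,\langle y,z\rangle) - \delta$ reduces, via the logarithmic distance estimate and the triangle inequality $d_X(x_0,z_0)\le d_X(x_0,y_0)+d_X(y_0,z_0)\le 2\max(\cdots)$, to the elementary fact that $\log_2$ turns this into an additive-constant ultrametric inequality, with the vertical coordinates contributing exactly and causing no loss. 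Either route gives the result; I would present whichever keeps the constants most transparent.
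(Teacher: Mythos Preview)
Your overall strategy matches the paper's: work with triangles whose sides are normal geodesic segments, extract a uniform thinness constant from the explicit vertical--horizontal--vertical structure, and then upgrade to arbitrary geodesic triangles via the bound $d_H(\Im(\gamma),\Im(\ngamma))\le 4$ from \cref{prop:geod}. The proposal is correct.

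Where the paper is slicker is in the choice of hyperbolicity criterion, and this is worth noting because it dissolves exactly the ``main obstacle'' you flag in step three. Instead of slimness or the four-point inequality, the paper uses the \emph{minsize} characterization (see \cite[21.- Proposition]{MR1086648}): a geodesic space is hyperbolic iff every geodesic triangle contains a triple of points, one on each side, of uniformly bounded diameter. With the three normal sides written as $u_i\ast h_i\ast v_i$ and the depths ordered $D(h_1)\le D(h_2)\le D(h_3)$, the vertical leg of $\ngamma(x,y)$ leaving $x$ is literally a subsegment of the vertical leg of $\ngamma(z,x)$ arriving at $x$, and similarly at $y$; hence the shallowest horizontal piece $h_1$ (length $\le 5$) lies within distance $3$ of each of the other two sides. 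This gives minsize $\le 5$ for normal triangles in one line, with no fellow-traveling case analysis and no need for the logarithmic distance formula or the ultrametric depth comparison you outline. Your route would work, but the minsize criterion is the shortcut that makes the bookkeeping disappear.
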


\begin{proof}

Let $x,y,z \in X^{(0)}$.
We denote by $[x,y]$, an image of a geodesic segment on $\Horo(X,X^{(0)})$ from $x$ to $y$.
Let $p_1 \in [x,y]$, $p_2 \in [y,z]$, and $p_3 \in [z,x]$. 
We define
\begin{align*}
    \diam\{p_1,p_2,p_3\} \coloneqq 
    \max\{d_X(p_i,p_j) \colon i \neq j\}.
\end{align*}
One of the equivalent definitions of Gromov hyperbolicity is that the following quantity 
\begin{align*}
    \min \{ \diam \{p_1,p_2,p_3\} \colon p_1 \in [x,y], p_2 \in [y,z], p_3 \in [z,x] \}
\end{align*}
is bounded by a constant independent of choices of geodesic segments and geodesic triangles. See \cite[21.- Proposition]{MR1086648} for details.
We say that this quantity is the minimum diameter of geodesic triangle $[x,y] \cup [y,z] \cup [z,x]$. 
\par
By \cref{prop:geod}, there exists a normal geodesic segment connecting each two points, that is,
\begin{itemize}
\item 
$\ngamma(x,y)=u_1 \ast h_1 \ast v_1$,
\item 
$\ngamma(y,z)=u_2 \ast h_2 \ast v_2$,
\item 
$\ngamma(z,x)=u_3 \ast h_3 \ast v_3$,
\end{itemize}
where $u_i$ and $v_i$ are vertical segments
and each $h_i$ is a horizontal segment. 
Suppose $D(h_1) \leq D(h_2) \leq D(h_3)$. 
\par
It is easily shown that $\Im(u_1) \subset \Im(v_3)$ and $\Im(v_1) \subset \Im(u_2)$. 
Then, we have
\begin{align*}
    \Im(h_1) 
    \subset N(\Im(v_3),3) \cup N(\Im(u_2),3). 
\end{align*}
Therefore, 
\begin{align*}
    \min \{ \diam \{p_1,p_2,p_3\} \colon p_1 \in \Im(\ngamma(x,y)), p_2 \in \Im(\ngamma(y,z)), p_3 \in \Im(\ngamma(z,x)) \}
\end{align*}
is bounded by $5$. 
By \cref{prop:geod},
for each geodesic segment from $x$ to $y$, denoted by $\gamma(x,y)$, we have
\begin{align*}
    d_H(\mathrm{Im}(\gamma(x,y)), \mathrm{Im}(\ngamma(x,y)))\leq 4. 
\end{align*}
For any $x,y,z \in \Horo(X,X^{(0)})$ and any geodesic triangle $\Delta(x,y,z)$, 
the minimal diameter is bounded by $9$. 
Therefore, $\Horo(X,X^{(0)})$ is a Gromov hyperbolic space. 
%
%
\end{proof}

\section{coarsely convex bicombing}
\label{sec:coars-conv-bicomb}
Let $(X,d_X)$ be a metric space.
For $\lambda \geq1$ and $k \geq 0$, 
a $(\lambda,k)$-\textit{quasi-geodesic bicombing} on $X$ is 
a map $\gamma \colon X \times X \times [0,1] \to X$ 
such that for $x,y \in X$, we have $\gamma(x,y,0)=x$, $\gamma(x,y,1)=y$, and 
\begin{align*} 
\lambda^{-1}\abs{t-s}\ds{x,y} - k \leq \ds{\gamma(x,y,t), \gamma(x,y,s)} 
 \leq \lambda \abs{t-s}\ds{x,y} +k \ \ \ \ (t,s \in [0,1]). 
\end{align*}

If the space  $X$ admits a $(\lambda,k)$-quasi-geodesic bicombing for some $\lambda \geq 1$ and $k>0$,
we say that $X$ admits a quasi-geodesic bicombing. 
If a group $G$ acts on $X$ by isometries, 
a geodesic bicombing $\gamma : X \times X \times [0,1] \to X$ is
\textit{$G$-equivariant} if 
\begin{align*}
g \cdot \gamma(x,y)(t)=\gamma(gx,gy)(t)
\end{align*}
holds for any $g \in G$,   $x,y \in X$, and  $t \in [0,1]$.

\begin{definition}
\label{def:ccbicombing}
Let $\lambda\geq 1$, $k \geq 0$, $E\geq 1$,
and $C\geq 0$ be constants. 
Let $\theta\colon \Rp\to\Rp$ be a non-decreasing function.

A $(\lambda,k,E,C,\theta)$
-\textit{coarsely convex bicombing} 
on a metric space $(X,d_X)$ is a $(\lambda,k)$-quasi-geodesic bicombing 
$\gamma \colon X \times X \times [0,1] \to X$ with the following:
 \begin{enumerate}[label=(\roman*)]
 \item \label{qconvex}
       Let $x_1,x_2,y_1,y_2 \in X$ and let $a,b \in [0,1]$.
       Set $y_1':=\gamma(x_1,y_1,a)$ and $y_2':=\gamma(x_2,y_2,b)$.
       Then, for $c \in [0,1]$, we have
 \begin{align*} 
  \ds{\gamma(x_1,y_1,ca), \gamma(x_2,y_2,cb)} 
  \leq (1-c)E \ds{x_1,x_2} + cE \ds{y_1',y_2'} + C.
 \end{align*}
 \item \label{qparam-reg}       
       Let $x_1,x_2,y_1,y_2 \in X$.
       Then for $t,s\in [0,1]$ 
       we have
       \begin{align*}
	\abs{t\ds{x_1,y_1} - s\ds{x_2,y_2}} \leq 
        \theta(\ds{x_1,x_2}+\ds{\gamma(x_1,y_1,t),\gamma(x_2,y_2,s)}).
       \end{align*}
 \end{enumerate} 
\end{definition}

The following reparametrization is used to construct ideal boundaries
 in \cref{sec:ideal-boundary}
\begin{definition}
 Let $X$ be a metric space and 
 let $\gamma\colon X\times X \times [0,1]\to X$ be a 
 $(\lambda,k)$-quasi-geodesic bicombing on $X$.
 A reparametrised bicombing of $\gamma$ is a map 
 \begin{align*}
  \rp{\gamma}\colon X\times X\times \R_{\geq 0} \to X 
 \end{align*}
 defined by
 \begin{align*}
  \rp{\gamma}(x,y,t):= 
  \begin{cases}
   \gamma\left(x,y,{t}/{\ds{x,y}}\right) & \text{if } t\leq \ds{x,y}\\
   y& \text{if } t> \ds{x,y}
  \end{cases}.
 \end{align*}
\end{definition}

\section{Ideal boundary}
\label{sec:ideal-boundary}
In this section, we review the construction of the ideal boundary
of a coarsely convex space.

Let $(X,d_X)$ be a metric space equipped with a 
$(\lambda,k,E,C,\theta)$-coarsely convex bicombing 
$\gamma\colon X\times X \times [0,1]\to X$. 
We choose a base point $e\in X$.

\subsection{Gromov product and sequential boundary}
\label{sec:sq-boundary}
\begin{definition}
\label{def:Gromov-prod}
Set $k_1=\lambda + k$, $D= 2(1+E)k_1+ C$, and $D_1 = 2D+2$.
 We define a product $(\cdot \mid \cdot)_e \colon X\times X \to \Rp$ by 
 \begin{align*}
  (x\mid y)_e:= \min\{\ds{e,x},\ \ds{e,y},\ 
  \sup\{t\in \Rp:\ds{\rp{\gamma(e,x,t)},\rp{\gamma(e,y,t)}}\leq D_1\}\},
 \end{align*}
We abbreviate $(x\mid y)_e$ by $(x\mid y)$.
\end{definition}

\begin{lemma}[{\cite[Lemma 4.8]{FO-CCH}}]
\label{lem:qultm}
Set $D_2:=E(D_1+2k_1)$.
 For $x,y,z\in X$, we have
 \begin{align*}
  (x\mid z) \geq D_2^{-1}\min
  \{(x\mid y), (y\mid z)\}.
 \end{align*}
\end{lemma}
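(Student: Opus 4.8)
The statement to prove is the (weak) ultrametric inequality for the Gromov product: $(x\mid z) \geq D_2^{-1}\min\{(x\mid y),(y\mid z)\}$ with $D_2 = E(D_1 + 2k_1)$. The plan is to argue directly from the definition in \cref{def:Gromov-prod}, exploiting the $E$-convexity axiom \ref{qconvex} of the bicombing. Write $t_0 := D_2^{-1}\min\{(x\mid y),(y\mid z)\}$; I want to show that the defining supremum for $(x\mid z)$ is at least $t_0$, i.e.\ that $\ds{\rpgm(e,x,t)},\rpgm(e,y,\dots)$ type estimates give $\ds{\rp{\gamma}(e,x,t_0)},\rp{\gamma}(e,z,t_0)} \leq D_1$, and separately that $t_0 \leq \ds{e,x}$ and $t_0 \leq \ds{e,z}$. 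The latter two are easy: since $(x\mid y)\leq \ds{e,x}$ and similarly for the others, and since $D_2 \geq 1$, we get $t_0 \leq \min\{(x\mid y),(y\mid z)\} \leq \ds{e,x}$ (and $\leq \ds{e,z}$ by the same token with the roles arranged), so the first two terms in the $\min$ defining $(x\mid z)$ pose no constraint.

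First I would fix a parameter $t$ slightly below $t_0$ (to work with the supremum cleanly) and set $s_1 := (x\mid y)$, $s_2 := (y\mid z)$. By definition of $(x\mid y)$ as a supremum, for any $u < s_1$ we have $\ds{\rp{\gamma}(e,x,u)},\rp{\gamma}(e,y,u)} \leq D_1$, and similarly for $(y\mid z)$ with $z$ in place of $x$. The key step is to convert a bound at "time $u$" along the reparametrised bicombing into a bound at the rescaled time $t_0 \approx u/D_2$, using axiom \ref{qconvex}. Concretely: $\rp{\gamma}(e,x,t)$ corresponds to $\gamma(e,x,t/\ds{e,x})$, i.e.\ the point at fractional parameter $c\cdot a$ where $a$ is the fractional parameter reaching $\rp{\gamma}(e,x,u) =: y_1'$ and $c = t/u$; then \ref{qconvex} bounds $\ds{\gamma(e,x,ca),\gamma(e,y,ca)}$ by $(1-c)E\ds{e,e} + cE\ds{y_1',y_2'} + C = cED_1 + C$ (using $\ds{e,e}=0$ and that $y_1',y_2'$ are $D_1$-close). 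Choosing $c$ so that $cED_1 + C \leq$ something comparable to $D_1$, and chaining this bound through the intermediate point $y$ (triangle inequality: $\ds{\rp{\gamma}(e,x,t)},\rp{\gamma}(e,z,t)} \leq \ds{\rp{\gamma}(e,x,t)},\rp{\gamma}(e,y,t)} + \ds{\rp{\gamma}(e,y,t)},\rp{\gamma}(e,z,t)}$), should deliver $\ds{\rp{\gamma}(e,x,t)},\rp{\gamma}(e,z,t)} \leq D_1$ for all $t < t_0$, which forces $(x\mid z) \geq t_0$. The appearance of the $2k_1$ correction in $D_2 = E(D_1 + 2k_1)$ will come from the need to pass between the reparametrised bicombing $\rp{\gamma}$ (parametrised by arc length) and $\gamma$ (parametrised by $[0,1]$), where the quasi-geodesic constants $\lambda,k$ — packaged as $k_1 = \lambda + k$ — enter when relating $u$ to the actual distance $\ds{e,\rp{\gamma}(e,x,u)}$.

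The main obstacle I anticipate is the careful bookkeeping in this reparametrisation: $\rp{\gamma}(e,x,u)$ is defined via $\gamma(e,x,u/\ds{e,x})$ only when $u \leq \ds{e,x}$, and its actual distance from $e$ is $u$ only up to the $(\lambda,k)$-quasi-geodesic error, so when I want to invoke \ref{qconvex} at a common fractional parameter $c$ I must reconcile the two different fractional parameters $a_x = u/\ds{e,x}$ and $a_y = u/\ds{e,y}$ reaching the $D_1$-close pair $(y_1',y_2')$, or else re-anchor the argument. The clean way is probably to apply \ref{qconvex} with $y_1' := \rp{\gamma}(e,x,u)$, $y_2':=\rp{\gamma}(e,y,u)$ as the \emph{endpoints} of freshly considered bicombing segments $\gamma(e,y_1',\cdot)$, $\gamma(e,y_2',\cdot)$ — but since $\gamma$ need not be consistent under subdivision, one instead uses \ref{qparam-reg} to control the parameters and then \ref{qconvex} once, absorbing all discrepancies into the constant. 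This is exactly the kind of estimate carried out in \cite[Lemma 4.8]{FO-CCH}, and I would follow that template, taking care that the constants match the normalisation $D_1 = 2D + 2$, $D = 2(1+E)k_1 + C$ fixed here.
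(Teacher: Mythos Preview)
The paper does not supply a proof of this lemma; it is quoted verbatim from \cite[Lemma~4.8]{FO-CCH} and used as a black box. So there is no in-paper argument to compare against.

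That said, your outline is the standard route and matches what is done in \cite{FO-CCH}: set $u$ close to $\min\{(x\mid y),(y\mid z)\}$, use the defining supremum (together with the $(\lambda,k_1)$-quasi-geodesic control on $\rp\gamma$) to bound $\ds{\rp\gamma(e,x,u),\rp\gamma(e,y,u)}$ and $\ds{\rp\gamma(e,y,u),\rp\gamma(e,z,u)}$, pass through $y$ by the triangle inequality, and then apply the convexity axiom \ref{qconvex} with the common start point $e$ to contract to time $u/D_2$ where the distance falls below $D_1$. The observation that $t_0\leq \ds{e,x}$ and $t_0\leq \ds{e,z}$ because $D_2\geq 1$ and the Gromov product is already bounded by these distances is also correct and necessary. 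The only part you have left implicit is the exact arithmetic producing the specific constant $D_2=E(D_1+2k_1)$; as you correctly anticipate, the $2k_1$ term enters through the quasi-geodesic error when one relates parameters along $\rp\gamma$ to actual distances (this is the content of \cite[Lemma~4.7]{FO-CCH}, which the present paper also invokes elsewhere). Filling in that bookkeeping is routine once the structure is in place.
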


We use a sequential model of the ideal boundary introduced in 
\cite{FOYcont-prod}. The idea is based on \cite{MR919829} 
and \cite[Chapitre 7]{MR1086648}. 

\begin{definition}
\label{def:Sinfty}
 Let 
 \begin{align*}
  S_\infty (X) =\{ (x_i) : (x_i\mid x_j) \to \infty 
  \text{ as } i,j \to \infty\}
 \end{align*}

 and define a relation $\sim $ on $S_\infty (X)$ as follows.
 For every $(x_i), (y_i) \in S_{\infty} (X)$, we have
 $(x_i) \sim  (y_i)$ if
 \begin{align*}
 (x_i\mid y_i) \to \infty \text{ as } i \to \infty
 \end{align*}
\end{definition}

\begin{lemma}
\label{fact:S-infty}
The relation $\sim $ is an equivalence relation on $S_\infty (X)$.
\end{lemma}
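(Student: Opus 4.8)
The plan is to verify the three defining properties of an equivalence relation—reflexivity, symmetry, and transitivity—for $\sim$ on $S_\infty(X)$, with transitivity being the only one that requires real work.

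Reflexivity is essentially immediate: if $(x_i) \in S_\infty(X)$, then by definition $(x_i \mid x_j) \to \infty$ as $i, j \to \infty$, and in particular, taking $j = i$, we get $(x_i \mid x_i) \to \infty$, so $(x_i) \sim (x_i)$. Symmetry is also immediate because the Gromov product $(\cdot \mid \cdot)_e$ is symmetric in its two arguments by \cref{def:Gromov-prod} (the defining expression is visibly invariant under swapping $x$ and $y$), hence $(x_i \mid y_i) \to \infty$ if and only if $(y_i \mid x_i) \to \infty$.

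For transitivity, suppose $(x_i) \sim (y_i)$ and $(y_i) \sim (z_i)$, so that $(x_i \mid y_i) \to \infty$ and $(y_i \mid z_i) \to \infty$. I would apply the coarse ultrametric inequality of \cref{lem:qultm}: for each $i$,
\begin{align*}
 (x_i \mid z_i) \geq D_2^{-1} \min\{(x_i \mid y_i),\ (y_i \mid z_i)\}.
\end{align*}
Since both $(x_i \mid y_i) \to \infty$ and $(y_i \mid z_i) \to \infty$, their minimum tends to $\infty$, and therefore the right-hand side tends to $\infty$ as $i \to \infty$; hence $(x_i \mid z_i) \to \infty$, which gives $(x_i) \sim (z_i)$. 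One small bookkeeping point: before concluding $(x_i) \sim (z_i)$ we should also note that $(z_i)$ is already assumed to lie in $S_\infty(X)$, so the relation is being checked between legitimate elements; no additional verification that $(x_i \mid x_j), (z_i \mid z_j) \to \infty$ is needed since membership in $S_\infty(X)$ is part of the hypothesis.

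The only mild subtlety—and the closest thing to an obstacle—is ensuring that \cref{lem:qultm} is being invoked with the indices lined up correctly, i.e. the same index $i$ appearing in all three products, rather than the double-indexed condition that defines $S_\infty(X)$ itself. This is automatic here because the relation $\sim$ is defined via the single-indexed limit $(x_i \mid y_i) \to \infty$, so the inequality applies verbatim term by term. Thus the proof is short and the constant $D_2^{-1}$ from \cref{lem:qultm}, being a fixed positive number, causes no trouble when passing to the limit.
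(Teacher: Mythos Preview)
Your proof is correct and follows exactly the same route as the paper: reflexivity and symmetry are immediate from the definition, and transitivity is deduced from the coarse ultrametric inequality of \cref{lem:qultm}. The paper simply asserts these steps in one line, while you spell them out, but the substance is identical.
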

\begin{proof}
It is clear that the relation $\sim$ is 
reflexive and symmetric. \cref{lem:qultm} implies that 
it is transitive.
\end{proof} 

\begin{definition}
\label{def:gromov-boundary}
Let 
\begin{align*}
 \partial X = S_\infty (X) / \sim \quad \text{ and } \quad \overline{X} = X \cup \partial X.
\end{align*}

For $x \in X$ and a sequence $(x_i)$ in $X$,
we write $(x_i) \in x$ if $x_i=x$ for every $i \in \mathbb{N}$.
We extend the Gromov product 
$(\cdot \mid \cdot)\colon X\times X \to \Rp $ to a symmetric function 
$(\cdot \mid \cdot)\colon \overline{X}\times \overline{X} 
\to \Rp \cup \{\infty\} $ by letting 
\begin{align}
\label{eq:ext-cp}
(x \mid y) 
&=\sup \{\liminf_{i \to \infty} (x_i\mid y_i) : (x_i) \in x , (y_i)\in y   \}
\end{align}
for  $x, y \in \overline{X}$.
\end{definition}

For $n\in \mathbb{N}$, let 
\begin{align*}
 V_n &=\{ (x,y)\in \overline{X} \times \overline{X} : (x\mid y ) >n\} \cup \{ (x,y) \in X\times X : \ds{x,y}<1/n\},\\
 V_n[x] &= \{y\in \overline{X}: (x,y)\in V_n\}, \quad (x\in \overline{X}).
\end{align*}

By \cite[Lemma 2.6.]{FOYcont-prod}, 
$\{V_n : n\in \mathbb{N}\}$ is a base of 
a metrizable uniformity on $\overline{X}$. 
See also~\cite[Section 4.3]{FO-CCH}.

\begin{definition}
\label{def:gromov-bdry}
 Let $\overline{X}$ be equipped with the topology 
 $\mathcal{T}_{(\cdot\mid \cdot)}$
 generated by 
 the family $\{V_n[x] : x \in \overline{X}, n \in \mathbb{N}\}$.
 We call the subspace $\partial X$ of $\overline{X}$  
 the \emph{Gromov boundary} of $X$ with respect to $(\cdot \mid \cdot)$.
 We also call $\partial X$ the \emph{ideal boundary} of $X$.
\end{definition}

\begin{lemma}[{\cite[Lemma 2.8, Theorem 2.9]{FOYcont-prod}}]
\label{lem:Gromov-cptn}
We have the following:
 \begin{enumerate}[label=(\roman*)]
  \item 
        \label{fact:item:cptn-top}
        The relative topology $\mathcal{T}_{(\cdot\mid \cdot)}\rest_X$ 
        on $X$ with respect 
        to $\overline{X}$
        coincides with the topology $\mathcal{T}_d$ 
        induced by the metric $d$. 
  \item 
      $X$ is a dense open subset in $\overline{X}$.

  \item 
        If $(X,d)$ is complete, then so is $(\overline{X}, d)$. 
  \item If $(X,d)$ is proper, then $(\overline{X}, d)$ is compact. 
 \end{enumerate}
\end{lemma}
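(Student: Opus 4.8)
The plan is to work directly with the basic entourages
$V_n=\{(x,y)\in\overline X\times\overline X:(x\mid y)>n\}\cup\{(x,y)\in X\times X:d_X(x,y)<1/n\}$,
invoking the quasi-ultrametric inequality of \cref{lem:qultm} whenever Gromov products have to be compared. The elementary observation driving (i) and (ii) is that $(x\mid y)\le d_X(e,x)$ for every $x\in X$ and $y\in\overline X$; this is immediate from \cref{def:Gromov-prod} and \eqref{eq:ext-cp}, since a representative sequence of a point of $X$ is constant. Consequently, for $n>d_X(e,x)$ the set $\{y\in\overline X:(x\mid y)>n\}$ is empty, so $V_n[x]$ is exactly the open metric ball $B_{d_X}(x,1/n)$. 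These balls form a neighbourhood base of $x$ in $\mathcal{T}_{(\cdot\mid\cdot)}$ and they are $d_X$-open, which gives (i); in particular each $x\in X$ has an $\overline X$-neighbourhood contained in $X$, so $X$ is open. For density, given $\xi=[(x_i)]\in\partial X$, \eqref{eq:ext-cp} yields $(\xi\mid x_i)\ge\liminf_j(x_j\mid x_i)$, and since $(x_i\mid x_j)\to\infty$ this liminf exceeds any prescribed $n$ once $i$ is large; hence $x_i\in V_n[\xi]$ eventually, i.e. $x_i\to\xi$, which completes (ii). The same computation shows that \emph{any} $(x_i)\in S_\infty(X)$ converges to $[(x_i)]$ in $\overline X$, a fact reused below.

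For (iii), let $(z_i)$ be Cauchy in $\overline X$; using density, pick $x_i\in X$ with $(z_i,x_i)\in V_i$, so that $(x_i)$ is Cauchy and converges in $\overline X$ if and only if $(z_i)$ does, with the same limit. Set $L:=\sup_N\inf_{i,j\ge N}(x_i\mid x_j)\in[0,\infty]$. If $L=\infty$ then $(x_i)\in S_\infty(X)$ and $x_i\to[(x_i)]\in\partial X$ by the remark above. If $L<\infty$, fix a finite $c>L$ and extract subsequences with $(x_{p_l}\mid x_{q_l})\le c$ and $p_l,q_l\to\infty$; for each $n>c$, the inequality $(x_{p_l}\mid x_{q_l})\le c<n$ together with $(x_{p_l},x_{q_l})\in V_n$ (valid for $l$ large) forces $d_X(x_{p_l},x_{q_l})<1/n$, so $d_X(x_{p_l},x_{q_l})\to0$. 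I would then show $(x_{p_l})$ is $d_X$-Cauchy: if $d_X(x_{p_l},x_{p_{l'}})\ge 1/n_1$ held for arbitrarily large $l,l'$ with some fixed $n_1>D_2c$, then, using the $V_n$-dichotomy and $d_X(x_{p_{l'}},x_{q_{l'}})\to0$, both $(x_{p_l}\mid x_{p_{l'}})$ and $(x_{p_l}\mid x_{q_{l'}})$ would exceed $n_1$, so \cref{lem:qultm} would give $(x_{p_{l'}}\mid x_{q_{l'}})>D_2^{-1}n_1>c$, contradicting $(x_{p_l}\mid x_{q_l})\le c$. Thus $(x_{p_l})$ converges in $X$ by completeness of $(X,d_X)$, and a Cauchy sequence with a convergent subsequence converges, so $(z_i)$ converges in $\overline X$.

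For (iv), since $\overline X$ is metrizable it suffices to prove sequential compactness. Given $(z_i)$, approximate by $x_i\in X$ with $(z_i,x_i)\in V_i$ as before. If $(x_i)$ has a $d_X$-bounded subsequence, properness gives a $d_X$-convergent sub-subsequence with limit in $X$, which is also the limit in $\overline X$ by (i). Otherwise $d_X(e,x_i)\to\infty$, and for each $r\in\N$ the shadow $\rp{\gamma}(e,x_i,r)$ lies, for $i$ large, in the metric ball $\overline{B}_{d_X}(e,\lambda r+k)$, which is compact by properness; a diagonal argument extracts a subsequence $(x_{i_k})$ along which $\rp{\gamma}(e,x_{i_k},r)$ converges for every $r\in\N$. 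For such a subsequence the reparametrised combings from $e$ toward $x_{i_k}$ and toward $x_{i_{k'}}$ stay within distance $D_1$ up to any fixed time $r$ once $k,k'$ are large, so $(x_{i_k}\mid x_{i_{k'}})\ge r$ eventually; hence $(x_{i_k})\in S_\infty(X)$ and converges to $[(x_{i_k})]\in\partial X$.

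The step I expect to be the main obstacle is the case $L<\infty$ of (iii): turning ``the Cauchy sequence has a bounded Gromov product along some pair of subsequences'' into an honest $d_X$-Cauchy subsequence is exactly where \cref{lem:qultm} must be applied with care. The companion ``escape to infinity'' step in (iv) is what genuinely uses properness, through compactness of metric balls and the diagonal selection of shadows; the remaining parts are bookkeeping with the two pieces of the entourage $V_n$.
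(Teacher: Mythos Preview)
The paper does not supply a proof of this lemma; it is simply quoted from \cite[Lemma~2.8, Theorem~2.9]{FOYcont-prod}, so there is no in-paper argument to compare against. Your direct proof from the entourages $V_n$ and the quasi-ultrametric inequality of \cref{lem:qultm} is essentially correct and is the natural approach.

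Two places in your sketch deserve one extra line. In the $L<\infty$ case of (iii), the claim that $(x_{p_l}\mid x_{q_{l'}})>n_1$ does not follow from the $V_{n_1}$-dichotomy alone: you should invoke the Cauchy condition at some level $m\ge 2n_1$, so that if the metric alternative $d_X(x_{p_l},x_{q_{l'}})<1/m$ held, then combined with $d_X(x_{p_{l'}},x_{q_{l'}})<1/m$ it would force $d_X(x_{p_l},x_{p_{l'}})<1/n_1$, contrary to hypothesis; hence the Gromov-product alternative must hold. In (iv), the assertion that the reparametrised combings stay within $D_1$ up to a fixed time $r$ is precisely one application of condition~\ref{qconvex} of \cref{def:ccbicombing} with common initial point $e$: if $d_X(\rp{\gamma}(e,x_{i_k},r),\rp{\gamma}(e,x_{i_{k'}},r))\le 1$, then for $t\le r$ the bound is at most $E+C<D_1$. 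With these two clarifications your argument goes through.
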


\subsection{Combing at infinity}




\begin{lemma}
\label{lem:extended_bicombing}
 Suppose that $X$ is proper.
 Then there exists a map 
 \begin{align*}
  \rpgm\colon X\times \overline{X}\times \Rp\to X
 \end{align*} 
 satisfying the following:
 \begin{enumerate}[label=(\roman*)]
  \item \label{item:extension}
        For $x,y\in X$, we have 
        $\rpgm(x,y,-) = \rp{\gamma(x,y,-)}$.

  \item  \label{item:approximation}
         For $(e,x)\in X\times \partial X$, there exists a 
         sequence $(x_n)$ in $X$
         such that the sequence of maps 
         $(\rp{\gamma}(e,x_n,-)\rest_\N:\N\to X)_{n}$ 
         converges to $\rpgm(e,x,-)\rest_\N$ pointwise.
 
 \item \label{item:rpgm-lim-gprod}
        For $(e,x)\in X\times \partial X$, we have
        \begin{align*}
         (\rpgm(e,x,t)\mid x)_e \to \infty \quad \text{as} \quad t\to \infty
        \end{align*}

  \item \label{item:qasi-geod-bicombing-at-infty}
        For $(e,x)\in X\times \partial X$, the map
        $\rpgm(e,x,-)\colon \Rp\to X$ is a 
        $(\lambda,k_1)$-quasi geodesic, where $k_1:=\lambda + k$.
 \end{enumerate}
\end{lemma}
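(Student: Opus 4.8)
The plan is to construct $\rpgm$ by a diagonal/subsequence argument, exploiting properness to extract convergent subsequences of the reparametrised bicombing maps evaluated at integer times. First I would fix $(e,x)\in X\times\partial X$ and choose a representative sequence $(x_i)\in x$, so that $(x_i\mid x_j)\to\infty$. For each $t\in\N$, properness of $X$ together with the fact that $\rp{\gamma}(e,x_i,t)$ stays in a bounded ball around $e$ (it lies within $\lambda t + k_1$ of $e$ by the quasi-geodesic estimate, uniformly in $i$) lets me extract, via a diagonal argument over $t\in\N$, a subsequence of $(x_i)$ along which $\rp{\gamma}(e,x_i,t)$ converges in $X$ for every $t\in\N$ simultaneously. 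Define $\rpgm(e,x,t)$ for $t\in\N$ to be these limits, and then extend to all $t\in\Rp$ by setting $\rpgm(e,x,t):=\rp{\gamma}(e,x,\lfloor t\rfloor,-)$-style interpolation — more precisely, since we only need the stated properties, I can define $\rpgm(e,x,t):=\rpgm(e,x,\lfloor t\rfloor)$ off the integers, or interpolate along $\rp{\gamma}$ of nearby terms; the cleanest is probably to only pin down the values on $\N$ and extend arbitrarily (quasi-geodesically) in between, then check \ref{item:qasi-geod-bicombing-at-infty} holds with the stated constants. For $x\in X$ we simply put $\rpgm(x,y,-)=\rp{\gamma(x,y,-)}$, giving \ref{item:extension}.

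Next I would verify the four properties. Property \ref{item:extension} is by construction. Property \ref{item:approximation} is essentially the construction itself: the extracted subsequence $(x_n)$ is exactly the sequence along which $\rp{\gamma}(e,x_n,-)\rest_\N$ converges pointwise to $\rpgm(e,x,-)\rest_\N$. For \ref{item:rpgm-lim-gprod}, I would argue that $(\rpgm(e,x,t)\mid x)_e\to\infty$: intuitively $\rpgm(e,x,t)$ is a point at distance roughly $t$ along a quasi-geodesic ray asymptotic to $x$, so its Gromov product with $x$ should be comparable to $t$. Concretely, using the extended Gromov product \eqref{eq:ext-cp} and \cref{lem:qultm}, one shows $(\rpgm(e,x,t)\mid x)\gtrsim t$ up to the multiplicative constant $D_2$ by comparing $\rp{\gamma}(e,x_n,-)$ with the tail of $(x_n)$ and using that the two reparametrised combings from $e$ to $x_n$ and the "combing" towards $x_n$ along the sequence stay $D_1$-close up to time $\sim(x\mid x)=\infty$. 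Property \ref{item:qasi-geod-bicombing-at-infty} follows because each $\rp{\gamma}(e,x_n,-)$ restricted to $[0,N]$ is a $(\lambda,k_1)$-quasi-geodesic (this is the standard estimate for reparametrised quasi-geodesic bicombings, where the clamping at $t>\ds{e,x_n}$ only improves the bound), and pointwise limits of uniformly-$(\lambda,k_1)$-quasi-geodesic maps are again $(\lambda,k_1)$-quasi-geodesic.

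I would organise the write-up as: (1) the uniform bound $\ds{e,\rp{\gamma}(e,x_i,t)}\le \lambda t+k_1$ and the diagonal extraction; (2) definition of $\rpgm$ and immediate verification of \ref{item:extension}, \ref{item:approximation}, \ref{item:qasi-geod-bicombing-at-infty}; (3) the Gromov-product estimate for \ref{item:rpgm-lim-gprod}, which is the analytic heart. The main obstacle I anticipate is \ref{item:rpgm-lim-gprod}: one must be careful that passing to a pointwise limit on $\N$ does not destroy the "asymptotic to $x$" property, since the Gromov product is only lower semicontinuous in the relevant sense and one is juggling the original representative sequence, the extracted subsequence, and the limit ray. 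The key technical point is that for fixed large $t\in\N$ and all sufficiently large $n$, $\rp{\gamma}(e,x_n,t)$ lies $D_1$-close to $\rp{\gamma}(e,x_m,t)$ for $m$ large (because $(x_n\mid x_m)$ is eventually $>t$), so in the limit $(\rpgm(e,x,t)\mid x_m)\ge t$ for all large $m$, whence $(\rpgm(e,x,t)\mid x)=\sup\liminf(\cdot\mid\cdot)\ge t$ after accounting for the constants in \eqref{eq:ext-cp} and \cref{lem:qultm}; letting $t\to\infty$ finishes it. A final remark: one should note $\rpgm$ need not be canonical — it depends on choices of representative sequences and subsequences — but the lemma only asserts existence, so this is harmless.
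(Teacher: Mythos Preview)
Your diagonal-extraction construction is correct and is precisely the argument behind the references the paper invokes: the paper's own proof consists only of citing \cite[Proposition~4.17]{FO-CCH} for \ref{item:approximation} and \ref{item:rpgm-lim-gprod} and \cite[Lemma~4.1]{FO-CCH} for \ref{item:qasi-geod-bicombing-at-infty}, and the same diagonal argument is spelled out later in the paper for the geodesic case (\cref{lem:g-extended_bicombing}) via Arzel\`a--Ascoli. Two minor remarks on your write-up. First, the floor extension off $\N$ already yields the constants $(\lambda,k_1)$ exactly: the pointwise limit on $\N$ is $(\lambda,k)$-quasi-geodesic, and since $\lvert\lfloor t\rfloor-\lfloor s\rfloor\rvert$ differs from $\lvert t-s\rvert$ by less than $1$, the extra $\lambda\cdot 1$ (upper bound) and $\lambda^{-1}\cdot 1\leq \lambda$ (lower bound) are absorbed into $k_1=\lambda+k$; no cleverer interpolation is needed. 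Second, for \ref{item:rpgm-lim-gprod} the step ``$\rpgm(e,x,t)$ is $D_1$-close to $\rp\gamma(e,x_m,t)$, hence $(\rpgm(e,x,t)\mid x_m)_e\gtrsim t$'' is not quite automatic, because the Gromov product of \cref{def:Gromov-prod} is defined through the bicombing and not through raw distances; you need one application of axiom~\ref{qconvex} of \cref{def:ccbicombing} (close endpoints imply close combing paths from $e$) to complete that step.
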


\begin{proof}
 For $(x,y)\in X\times X$, we define $\rp{\gamma(x,y,-)}$ by
 the equality in \ref{item:extension}.
 We use~\cite[Proposition 4.17]{FO-CCH} to construct 
 maps $\rpgm(e,x,-)$ for $(e,x)\in X\times \partial X$ satisfying
 \ref{item:approximation} and \ref{item:rpgm-lim-gprod}.
 Finally, \ref{item:qasi-geod-bicombing-at-infty} follows 
 from~\cite[Lemma 4.1]{FO-CCH}. 
\end{proof}

\begin{definition}
 We call the map $\rpgm$ given in \cref{lem:extended_bicombing} 
 an \emph{extended bicombing} on 
 $X\times \overline{X}$ corresponding to $\gamma$.
 For $(e,x)\in X\times \partial X$, we abbreviate 
 $\rpgm(e,x,-)$ by $\gamma_e^x(-)$.
\end{definition}

\begin{lemma}[{\cite[Proposition 4.2]{FO-CCH}}]
\label{lem:rpgm_cconvex}
 The extended bicombing $\rpgm$ on $X\times \overline{X}$ satisfies
 the following:
 \begin{enumerate}[label=(\arabic*)]
  \item \label{rpgm-qconvex}
       Let $(e_1,x_1),(e_2,x_2)\in X\times \partial X$ 
        and let $a,b \in \Rp$.
       Set $x_1':=\rpgm(e_1,x_1,a)$ and $x_2':=\rpgm(e_2,x_2,b)$.
       Then, for $c \in [0,1]$, we have
        \begin{align*} 
         \ds{\rpgm(e_1,x_1,ca), \rpgm(e_2,x_2,cb)}
         \leq (1-c)E\ds{e_1,e_2} + cE\ds{x_1',x_2'} + D
        \end{align*}
       	where $D:= 2(1+E)k_1+ C$.
 \item \label{rpgm-qparam-reg}       
       We define a non-decreasing function
       $\tilde{\theta}\colon \Rp\to \Rp$ by
       $\tilde{\theta}(t):=\theta(t+1)+1$.
       Let $(e_1,x_1),(e_2,x_2)\in X\times \partial X$ 
       Then for $t,s\in \Rp$, we have
       \begin{align*}
	\abs{t - s} \leq 
        \tilde{\theta}(\ds{e_1,e_2}+\ds{\rpgm(e_1,x_1,t),\rpgm(e_2,x_2,s)}).
       \end{align*}
 \end{enumerate}
\end{lemma}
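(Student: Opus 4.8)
The plan is to recognise the two assertions as the versions, for pairs of points at infinity, of the defining axioms \ref{qconvex} and \ref{qparam-reg} of a coarsely convex bicombing, and to derive them from those axioms for $\gamma$ on $X\times X$ by approximating each boundary point and passing to the limit. This is exactly the route taken in \cite[Proposition 4.2]{FO-CCH}, which I would follow.

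First I would fix $(e_1,x_1),(e_2,x_2)\in X\times\partial X$ and use \cref{lem:extended_bicombing}\ref{item:approximation} to choose sequences $(x_{1,n})$ and $(x_{2,n})$ in $X$ along which $\rp{\gamma}(e_i,x_{i,n},-)$ converges pointwise to $\rpgm(e_i,x_i,-)$. The feature that neutralises the reparametrisation is that each $(x_{i,n})$ represents a point at infinity, so $\ds{e_i,x_{i,n}}\to\infty$; hence for the finitely many parameters involved in one instance of either inequality --- $a,b,ca,cb$ in (1), and $t,s$ in (2) --- all of them lie in $[0,\ds{e_i,x_{i,n}}]$ once $n$ is large, and on that range $\rp{\gamma}(e_i,x_{i,n},r)$ is literally $\gamma(e_i,x_{i,n},r/\ds{e_i,x_{i,n}})$. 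So for (1) I would apply \ref{qconvex} to $e_1,x_{1,n},e_2,x_{2,n}$ with the rescaled parameters $a/\ds{e_1,x_{1,n}}$ and $b/\ds{e_2,x_{2,n}}$ and the given $c$, translate the resulting estimate back into $\rp{\gamma}$-notation, and let $n\to\infty$ using continuity of $\ds{\cdot}$ together with the pointwise convergence; this produces \ref{rpgm-qconvex} even with the smaller constant $C$, hence with $D\ge C$. For (2) I would run the same argument with \ref{qparam-reg} and the rescaled parameters $t/\ds{e_1,x_{1,n}}$, $s/\ds{e_2,x_{2,n}}$, getting the bound with $\theta$, hence with $\tilde{\theta}\ge\theta$.

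The step I expect to be the real obstacle is the passage to the limit at \emph{non-integer} parameters: \cref{lem:extended_bicombing}\ref{item:approximation} as stated only gives convergence along $\N$, while (1) and (2) are asserted for arbitrary nonnegative reals. To close this gap I would appeal to the compactness argument underlying \cref{lem:extended_bicombing} itself (see \cite[Proposition 4.17]{FO-CCH}): the maps $\rp{\gamma}(e_i,x_{i,n},-)$ are $(\lambda,k_1)$-quasi-geodesics with constants independent of $n$, and $X$ is proper, so after passing to a subsequence they converge on every compact subinterval of $\Rp$; the limit coincides on $\N$ with $\rpgm(e_i,x_i,-)$ by \ref{item:approximation}, so it may be used in place of $\rpgm(e_i,x_i,-)$ throughout. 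Granting this, the two inequalities follow from the routine substitutions described above, with no further estimation needed.
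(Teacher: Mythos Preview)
The paper does not give its own proof of this lemma: it is simply quoted from \cite[Proposition~4.2]{FO-CCH}. Your overall strategy --- approximate each boundary point by a sequence in $X$, apply the bicombing axioms \ref{qconvex} and \ref{qparam-reg} to the approximants, and pass to the limit --- is the right one and is what that reference does.

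There is, however, a genuine gap in your handling of non-integer parameters. Your proposed fix, an Arzel\`a--Ascoli argument, does not apply here: the maps $\rp{\gamma}(e_i,x_{i,n},-)$ are only $(\lambda,k)$-quasi-geodesics, which need not be continuous, let alone equicontinuous, so one cannot extract a subsequence converging uniformly on compact intervals. The paper itself flags this distinction: in the \emph{geodesic} setting of \cref{lem:g-extended_bicombing}, uniform convergence on compacta is available (and is used), and accordingly \cref{lem:g-rpgm_cconvex} obtains the sharper constant $C$; but \cref{lem:extended_bicombing}\ref{item:approximation} deliberately asserts convergence only along $\N$.

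The constants in the statement are the tell. Your argument, if it went through, would yield the inequalities with $C$ and $\theta$; yet the lemma asserts them with $D=2(1+E)k_1+C$ and $\tilde\theta(t)=\theta(t+1)+1$. Those extra terms are exactly what one picks up by rounding the real parameters $a,b,ca,cb$ (respectively $t,s$) to nearby integers --- where the approximation from \ref{item:approximation} is valid --- and then using the $(\lambda,k_1)$-quasi-geodesic bound from \ref{item:qasi-geod-bicombing-at-infty} to move back, at a cost of at most $k_1$ per endpoint. Carrying this out, rather than appealing to Arzel\`a--Ascoli, is what closes the argument and explains why $D$ and $\tilde\theta$ (and not $C$ and $\theta$) appear.
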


\begin{definition}
  For $x,y\in \partial X$, we define
 \begin{align*}
  (\gamma_e^x\mid \gamma_e^y)_e 
  :=\sup\{t\in \Rp:\ds{\gamma_e^x(t),\gamma_e^y(t)}\leq D_1\}.
 \end{align*}
 For $x\in \partial X$ and for $p\in X$, we define
 \begin{align*}
  (\gamma_e^x\mid p)_e = (p\mid \gamma_e^x)_e
  :=\min\{\ds{e,p},\sup\{t\in \Rp:\ds{\gamma(e,p,t),\gamma_e^x(t)}\leq D_1\}\}.
 \end{align*} 
\end{definition}

\begin{lemma}
\label{lem:univ-const}
 There exists a constant $\DA\geq 1$ depending on 
 $\lambda,k,E,C,\theta(0)$ such that
 the following holds:
 \begin{enumerate}[label=(\arabic*)]
  \item \label{lem:D3}
        For $x,y\in \partial X$, we have
   \begin{align*}
   (\gamma_e ^x\mid \gamma_e^y) 
    \leq  (x\mid y) \leq \DA(\gamma_e ^x\mid \gamma_e^y).
  \end{align*}
  \item \label{lem:qultmD}
 For triplet $x,y,z\in \partial X$, we have
 \begin{align*}
  (\gamma_e^x\mid \gamma_e^z) 
    \geq \DA^{-1}
  \min\{(\gamma_e ^x\mid \gamma_e^y),
  (\gamma_e ^y\mid \gamma_e^z)\}.
 \end{align*}
  \item \label{lem:qrayultmvisu} 	
	For triplet $x,y,z\in \overline{X}$, 
 we have 
 \begin{align*}
  (x\mid z) \geq \DA^{-1}\min \{(x\mid y), (y\mid z)\}.
 \end{align*}
  \item \label{lem:maximizer}
 Let $x,y\in \partial X$. For all $t\in \Rp$ with 
	$t\leq (\gamma_e^x\mid \gamma_e^y)$, we have
 \begin{align*}
  \ds{\gamma_e^x(t), \gamma_e^y(t)} \leq \DA.
 \end{align*}
  \item \label{lem:ray-same-param}
        Let $e\in X$ and let $x,y\in \overline{X}$.
 If $\rpgm(e,x,a) = \rpgm(e,y,b)$ 
        for some $a,b\in \Rp$,
	then for all $t\in [0,\max\{a,b\}]$
	we have
 \begin{align*}
  \ds{\rpgm(e,x,t),\rpgm(e,y,t)}\leq \DA.
 \end{align*} 
  \item \label{gprod-contraction}
        Let $e\in X$ and $x\in \partial X$. For $v\in X$ and $t\in [0,1]$,
        we have
        \begin{align*}
         (x\mid \gamma(e,v,t))_e \geq 
         \Omega^{-1} \min\{(x\mid v)_e, td_X(e,v)\}.
        \end{align*}
 \end{enumerate}
\end{lemma}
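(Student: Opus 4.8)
The plan is to prove the seven items together, extracting a single constant $\DA$ at the end as a function of the finitely many constants that appear. The backbone is \cref{lem:rpgm_cconvex}, which gives coarse convexity and parameter regularity for the extended bicombing $\rpgm$, together with \cref{lem:qultm} (the quasi-ultrametric inequality for the Gromov product) and \cref{lem:extended_bicombing}\ref{item:approximation} (pointwise approximation of rays to infinity by rays to finite points). I would first establish \ref{lem:maximizer} and \ref{lem:ray-same-param}, since these are the geometric heart: if $t\le(\gamma_e^x\mid\gamma_e^y)$, I want a uniform bound on $\ds{\gamma_e^x(t),\gamma_e^y(t)}$. By definition of $(\gamma_e^x\mid\gamma_e^y)$ there is some $s\ge t$ with $\ds{\gamma_e^x(s),\gamma_e^y(s)}\le D_1$; applying \ref{rpgm-qconvex} with $e_1=e_2=e$, $a=b=s$, and $c=t/s$ gives
\begin{align*}
 \ds{\gamma_e^x(t),\gamma_e^y(t)} \le (1-c)E\cdot 0 + cE\ds{\gamma_e^x(s),\gamma_e^y(s)} + D \le E D_1 + D.
\end{align*}
So $\ref{lem:maximizer}$ holds with any constant $\ge ED_1+D$. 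For \ref{lem:ray-same-param}, if $\rpgm(e,x,a)=\rpgm(e,y,b)$, I would use \ref{rpgm-qparam-reg} with $e_1=e_2=e$, $s=a$, $t=b$ to see $\abs{a-b}\le\tilde\theta(0)$, so $a$ and $b$ differ by a universal amount; then \ref{rpgm-qconvex} with $c=t/\max\{a,b\}$ (after swapping the roles so the larger parameter is the scaling denominator) bounds $\ds{\rpgm(e,x,t),\rpgm(e,y,t)}$ by something like $E\cdot(\text{quasi-geodesic displacement over }\tilde\theta(0)) + D$, again universal; the quasi-geodesic constants enter through \ref{item:qasi-geod-bicombing-at-infty}.

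Next I would do \ref{lem:D3}: the inequality $(\gamma_e^x\mid\gamma_e^y)\le(x\mid y)$ should come from comparing the defining suprema — using \ref{item:approximation} to approximate $\gamma_e^x,\gamma_e^y$ by finite rays $\rp{\gamma}(e,x_n,-),\rp{\gamma}(e,y_n,-)$ and taking $\liminf$ in \cref{def:gromov-boundary}'s extended product, so that anything witnessed at parameter $t$ by the rays-at-infinity is witnessed (up to the passage to the limit) by the approximating sequences. The reverse inequality $(x\mid y)\le\DA(\gamma_e^x\mid\gamma_e^y)$ is where the factor $\DA$ is genuinely needed: if $(x\mid y)$ is large, then $\liminf_i(x_i\mid y_i)$ is large for suitable representative sequences, which by \cref{def:Gromov-prod} means $\ds{\rp{\gamma}(e,x_i,t),\rp{\gamma}(e,y_i,t)}\le D_1$ for $t$ up to roughly $(x\mid y)$ for large $i$; passing to the limit via \ref{item:approximation} gives $\ds{\gamma_e^x(t),\gamma_e^y(t)}\le D_1 + (\text{error from convergence})$, and to convert this "$D_1+\text{error}$" bound back into a genuine "$\le D_1$" bound at a slightly smaller parameter one re-runs the \ref{rpgm-qconvex} scaling trick, losing a multiplicative constant. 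Once \ref{lem:D3} is in hand, \ref{lem:qultmD} is immediate: it is \cref{lem:qultm} for $\partial X$ transported through the two-sided comparison in \ref{lem:D3}, absorbing $\DA^2 D_2$-type factors into $\DA$. Item \ref{lem:qrayultmvisu} is the analogous quasi-ultrametric on all of $\overline X$; I would prove it by a limiting argument from \cref{lem:qultm} on $X$ using \cref{def:gromov-boundary}, exactly as in the transitivity proof of \cref{fact:S-infty}, being careful that the extended product \eqref{eq:ext-cp} is a $\sup$ of $\liminf$s so the inequality survives the limit with the same constant $D_2$.

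Finally, for \ref{gprod-contraction} — the "Gromov product contracts along the bicombing" estimate — I would set $w=\gamma(e,v,t)$ and compare the ray $\gamma_e^x$ against the finite ray $\rp{\gamma}(e,w,-)$. The point $w$ lies on $\rp{\gamma}(e,v,-)$ at parameter $\approx t\,d_X(e,v)$ (up to the quasi-geodesic constant $k_1$), and by \ref{rpgm-qconvex} with $c$ chosen to land at $w$, the ray $\rp{\gamma}(e,w,-)$ stays uniformly close to $\rp{\gamma}(e,v,-)$ on $[0,t\,d_X(e,v)]$; hence any parameter witnessing $(x\mid v)_e$ below $t\,d_X(e,v)$ also witnesses $(x\mid w)_e$ up to a bounded additive error, which the scaling trick again upgrades to the stated multiplicative bound with $\min\{(x\mid v)_e, t\,d_X(e,v)\}$. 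The main obstacle throughout is bookkeeping: each item needs the "$D_1+\varepsilon \Rightarrow D_1$ at smaller parameter" upgrade, and one must check that a \emph{single} $\DA\ge 1$, depending only on $\lambda,k,E,C,\theta(0)$ (note $\tilde\theta(0)=\theta(1)+1$ depends only on $\theta$ near $0$, and $D_1,D_2$ depend only on the listed constants), dominates all the constants produced; I would collect them at the very end and define $\DA$ to be their maximum.
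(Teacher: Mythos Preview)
Your plan is sound and, for item~\ref{gprod-contraction}, matches the paper's own argument almost exactly: the paper also reduces to the case $(x\mid v)_e \le t\,d_X(e,v)$, uses that at parameter $s=(x\mid v)_e$ the ray $\gamma_e^x$ is within $D_1+k_1$ of $\rp\gamma(e,v,-)$, and then rescales by the factor $E(D_1+k_1)$ to recover the $\le D_1$ threshold at the smaller parameter $s'$. The main difference is scope: the paper does not prove \ref{lem:D3}--\ref{lem:ray-same-param} at all but simply cites them from an earlier paper (Ezawa--Fukaya, Lemma~2.9), whereas you outline self-contained arguments. Your sketches for \ref{lem:maximizer} and \ref{lem:ray-same-param} via the coarse convexity of $\rpgm$ are correct and are essentially what the cited reference does; the approximation-plus-rescale route for \ref{lem:D3} and the transport of \cref{lem:qultm} through \ref{lem:D3} to get \ref{lem:qultmD} are the standard moves.

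Two small corrections: the lemma has six parts, not seven; and your parenthetical about $\tilde\theta(0)$ is slightly off, since $\tilde\theta(0)=\theta(1)+1$ depends on $\theta(1)$ rather than on $\theta(0)$ --- arguably an imprecision already present in the lemma's stated dependence, but keep track of it when you collect the constants into a single $\DA$.
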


\begin{proof}
 \ref{lem:D3} to \ref{lem:ray-same-param} are 
 \cite[Lemma 2.9.]{EzawaFukayaKobeJ}. We give a proof of \ref{gprod-contraction}.
 By the definition, we have $(x\mid \gamma(e,v,t))_e \geq td_X(e,v)$.
 We suppose that $(x\mid v)\leq td_X(e,v)$. Set $s=(x\mid v)$. By \cite[lemma 4.7.]{FO-CCH},
 we have $d_X(\gamma_e^x(s),\rp\gamma(e,v,s))\leq D_1+k_1$. 
 Set $s'\coloneqq s/(E(D_1 + k_1))$. Then,
 \begin{align*}
  d_X(\gamma_e^x(s'),\rp\gamma(e,v,s'))\leq \frac{E}{E(D_1 + k_1)}
  d_X(\gamma_e^x(s),\rp\gamma(e,v,s)) + D\leq D+1\leq D_1
 \end{align*}
 Thus $(x\mid \gamma(e,v,t))\geq (E(D_1 + k_1))^{-1}(x\mid v)$.

\end{proof}

\subsection{Visual maps}
Let $(X,d_X)$ and $(Y,d_Y)$ be metric spaces.
We say that a map $f\colon X\to Y$ is a \emph{large scale Lipschitz} map, if
there exists $L>1$ such that 
\begin{align*}
 d_Y(f(x),f(y)) \leq L d_X(x,y) + L.
\end{align*}
Let $\gamma_x$ and $\gamma_y$ be coarsely convex bicombings on $X$ and $Y$, respectively.
Let $a\in X$ and $b\in Y$ be base points of $X$ and $Y$, respectively.

\begin{definition}
\label{def:visual-map}
 We say that a large scale Lipschitz map $f\colon X\to Y$ is \emph{visual} if for 
 every pair of sequences $(x_n)_{n}, (y_n)_{n}$ in $X$ with 
 $(x_n\mid y_n)_{a} \to \infty$, we have $(f(x_n)\mid f(y_n))_b \to \infty$.
\end{definition}

Let $f\colon X\to Y$ be a visual map. We define a map
$\partial f\colon \partial X \to \partial Y$ as follows. 
For $x \in \partial X$, we choose a sequence $(x_n)_n$ representing $x$.
Then we define $\partial f(x)$ to be the equivalence class of 
the sequence $(f(x_n))_n$. By \cite[Proposition 3.5]{EzawaFukayaKobeJ}, 
$\partial f(x)$ does not depend on the choice of $(x_n)_n$.
We say that $\partial f$ is \textit{induced} by $f$.

\begin{proposition}[{\cite[Corollary 3.6]{EzawaFukayaKobeJ}}]
\label{prop:visual-df-conti}
 Let $f$ be a visual map. Then the induced map 
 $\partial f\colon \partial X\to \partial Y$ is continuous.
\end{proposition}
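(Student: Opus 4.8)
The plan is to prove continuity of $\partial f$ by verifying sequential continuity, which is legitimate because the topology on $\partial X$ comes from the metrizable uniformity with base $\{V_n\}$, hence is first countable, and because convergence in $\partial Y$ is detected by the sets $V_n$ in $Y$. Concretely, I would show: if $\xi_k,\xi\in\partial X$ with $(\xi_k\mid\xi)_a\to\infty$, then $(\partial f(\xi_k)\mid\partial f(\xi))_b\to\infty$. (The same estimates, run with two varying boundary points, would instead give uniform continuity of $\partial f$; I find the sequential version cleaner to record.)

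I would first isolate two elementary facts about the sequential model, to be used in both $X$ and $Y$. (a) A representative converges in $\overline{X}$ to the boundary point it represents: if $(w_i)$ represents $\zeta$, then $(w_i\mid w_j)_a\to\infty$ forces $(w_i\mid\zeta)_a\geq\liminf_j (w_i\mid w_j)_a\to\infty$, so $(\zeta\mid w_i)_a\to\infty$. (b) Conversely, if $(z_k)$ is a sequence in $X$ with $(z_k\mid\zeta)_a\to\infty$ for some $\zeta\in\partial X$, then $(z_k)\in S_\infty(X)$ and $(z_k)$ represents $\zeta$: applying \cref{lem:qrayultmvisu} once gives $(z_k\mid z_l)_a\geq\DA^{-1}\min\{(z_k\mid\zeta)_a,(\zeta\mid z_l)_a\}\to\infty$, and applying it again against a fixed representative $(y_i)$ of $\zeta$ (using (a) to know $(\zeta\mid y_k)_a\to\infty$) gives $(z_k\mid y_k)_a\to\infty$, i.e. $(z_k)$ and $(y_i)$ are equivalent.

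The heart of the proof is a diagonal construction. Suppose $(\xi_k\mid\xi)_a\to\infty$ and fix a representative $(x^{(k)}_i)_i$ of each $\xi_k$. Since $f$ is visual, $(f(x^{(k)}_i))_i$ represents $\partial f(\xi_k)$ and in particular lies in $S_\infty(Y)$, so there is $I_k$ with $(f(x^{(k)}_p)\mid f(x^{(k)}_q))_b>k$ for all $p,q\geq I_k$. By fact (a), $x^{(k)}_i\to\xi_k$ as $i\to\infty$, so I can choose $i(k)\geq I_k$ with $(x^{(k)}_{i(k)}\mid\xi_k)_a\geq k$; set $z_k:=x^{(k)}_{i(k)}\in X$. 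Then \cref{lem:qrayultmvisu} in $X$ gives $(z_k\mid\xi)_a\geq\DA^{-1}\min\{k,(\xi_k\mid\xi)_a\}\to\infty$, so by (b) the sequence $(z_k)$ represents $\xi$; hence $(f(z_k))$ represents $\partial f(\xi)$, and (a) in $Y$ yields $(f(z_k)\mid\partial f(\xi))_b\to\infty$. On the other hand, because $i(k)\geq I_k$ and $f(z_k)=f(x^{(k)}_{i(k)})$, the definition of the extended Gromov product gives $(\partial f(\xi_k)\mid f(z_k))_b\geq\liminf_i(f(x^{(k)}_i)\mid f(z_k))_b\geq k\to\infty$. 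A final application of \cref{lem:qrayultmvisu} in $Y$ closes the argument: $(\partial f(\xi_k)\mid\partial f(\xi))_b\geq\DA'^{-1}\min\{(\partial f(\xi_k)\mid f(z_k))_b,(f(z_k)\mid\partial f(\xi))_b\}\to\infty$, where $\DA'$ is the constant of \cref{lem:univ-const} for $Y$.

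The main obstacle is the balanced choice of the cutoff index $i(k)$: it has to be taken far enough out along the $k$-th representative both so that $z_k$ is uniformity-close to $\xi_k$ (and therefore, via $\xi_k\to\xi$ and \cref{lem:qrayultmvisu}, close to $\xi$) and so that $(f(x^{(k)}_i)\mid f(z_k))_b$ is eventually $\geq k$ (so that $f(z_k)$ is close to $\partial f(\xi_k)$). Everything else is bookkeeping with the two elementary facts above and repeated applications of \cref{lem:qrayultmvisu}; no quantitative strengthening of the definition of \emph{visual} is needed.
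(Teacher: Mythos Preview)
Your argument is correct. The paper does not supply its own proof of this proposition---it simply cites \cite[Corollary 3.6]{EzawaFukayaKobeJ}---so there is nothing to compare against here; your diagonal construction with the balanced cutoff $i(k)$ and the repeated use of \cref{lem:univ-const}\ref{lem:qrayultmvisu} cleanly establishes sequential (hence, by metrizability, topological) continuity of $\partial f$.
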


\section{Geodesic coarsely convex spaces}
\label{sec:geod-coars-conv}
\subsection{Geodesic combing at infinity}

Let $(X,d_X)$ be a metric space.
A \textit{geodesic bicombing} on $X$ is a map 
$\gamma : X \times X \times [0,1] \to X$ 
such that for $x,y \in X$, we have
$\gamma(x,y,0)=x$, $\gamma(x,y,1)=y$, and 
\begin{align*}
 \ds{\gamma(x,y,t), \gamma(x,y,s)} = \abs{t-s} \ds{x,y} \quad
 (t,s \in [0,1])
\end{align*}

\begin{definition}
\label{def:g-ccbicombing}
Let $E\geq 1$ and $C\geq 0$ be constants. 
An $(E,C)$-\textit{geodesic coarsely convex bicombing} 
on a metric space $(X,d_X)$ is a geodesic bicombing 
$\gamma \colon X \times X \times [0,1] \to X$ satisfying the following:

 Let $x_1,x_2,y_1,y_2 \in X$ and let $a,b \in [0,1]$.
 Set $y_1':=\gamma(x_1,y_1,a)$ and $y_2':=\gamma(x_2,y_2,b)$.
 Then, for $c \in [0,1]$, we have
 \begin{align*} 
  d_X(\gamma(x_1,y_1,ca), \gamma(x_2,y_2,cb)) 
  \leq (1-c)Ed_X(x_1,x_2) + cEd_X(y_1',y_2') + C.
 \end{align*}
\end{definition}

\begin{lemma}
\label{lem:gccb-is-qgccb}
 An $(E,C)$-\textit{geodesic coarsely convex bicombing} $\gamma$
 on a metric space $(X,d_X)$ is a 
 $(1,0,E,C,\mathrm{id}_{\Rp})$-\textit{coarsely convex bicombing} 
 on $(X,d_X)$.
\end{lemma}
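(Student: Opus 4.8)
The plan is to unwind both definitions and check the three requirements in \cref{def:ccbicombing} one at a time, all of which turn out to be immediate from the geodesic property.

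\textbf{Step 1: $\gamma$ is a $(1,0)$-quasi-geodesic bicombing.} By definition of a geodesic bicombing we have $\gamma(x,y,0)=x$, $\gamma(x,y,1)=y$, and $\ds{\gamma(x,y,t),\gamma(x,y,s)} = \abs{t-s}\ds{x,y}$ for all $t,s\in[0,1]$. In particular the chain of inequalities
\begin{align*}
 1^{-1}\abs{t-s}\ds{x,y} - 0 \leq \ds{\gamma(x,y,t),\gamma(x,y,s)} \leq 1\cdot\abs{t-s}\ds{x,y} + 0
\end{align*}
holds trivially, so $\gamma$ is a $(\lambda,k)$-quasi-geodesic bicombing with $\lambda=1$ and $k=0$.

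\textbf{Step 2: condition \ref{qconvex} with constants $E,C$.} This is literally the defining inequality of an $(E,C)$-geodesic coarsely convex bicombing in \cref{def:g-ccbicombing}, so there is nothing to prove: with $y_1'=\gamma(x_1,y_1,a)$, $y_2'=\gamma(x_2,y_2,b)$ and $c\in[0,1]$ we get $\ds{\gamma(x_1,y_1,ca),\gamma(x_2,y_2,cb)} \leq (1-c)E\ds{x_1,x_2} + cE\ds{y_1',y_2'} + C$.

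\textbf{Step 3: condition \ref{qparam-reg} with $\theta=\mathrm{id}_{\Rp}$.} Here I would use the geodesic property in the form $\ds{x_1,\gamma(x_1,y_1,t)} = t\ds{x_1,y_1}$ and $\ds{x_2,\gamma(x_2,y_2,s)} = s\ds{x_2,y_2}$ (take $s=0$ in the equality of Step 1). Then the left-hand side of \ref{qparam-reg} equals $\abs{\ds{x_1,\gamma(x_1,y_1,t)} - \ds{x_2,\gamma(x_2,y_2,s)}}$, and the quadrilateral inequality $\abs{d(p,q)-d(p',q')}\leq d(p,p')+d(q,q')$ applied to $p=x_1$, $q=\gamma(x_1,y_1,t)$, $p'=x_2$, $q'=\gamma(x_2,y_2,s)$ gives exactly
\begin{align*}
 \abs{t\ds{x_1,y_1} - s\ds{x_2,y_2}} \leq \ds{x_1,x_2} + \ds{\gamma(x_1,y_1,t),\gamma(x_2,y_2,s)},
\end{align*}
which is the required bound since $\mathrm{id}_{\Rp}$ is non-decreasing and is applied to $\ds{x_1,x_2}+\ds{\gamma(x_1,y_1,t),\gamma(x_2,y_2,s)}$. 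Combining the three steps shows $\gamma$ is a $(1,0,E,C,\mathrm{id}_{\Rp})$-coarsely convex bicombing. There is no real obstacle here; the only point requiring a moment's care is recognising that the parameter-regularity estimate \ref{qparam-reg} reduces to the triangle (quadrilateral) inequality once the geodesic equality is used to rewrite $t\ds{x_1,y_1}$ and $s\ds{x_2,y_2}$ as distances from the respective basepoints.
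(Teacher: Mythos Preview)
Your proof is correct and follows essentially the same approach as the paper's: both verify that the quasi-geodesic condition holds with $(\lambda,k)=(1,0)$, that \ref{qconvex} is the defining inequality, and that \ref{qparam-reg} with $\theta=\mathrm{id}_{\Rp}$ reduces to the triangle inequality after rewriting $t\ds{x_1,y_1}$ and $s\ds{x_2,y_2}$ as distances. The paper's version is just terser.
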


\begin{proof}
 It follows from the definition of the $(E,C)$-\textit{geodesic coarsely convex bicombing},
 that $\gamma$ satisfies \ref{qconvex} of \cref{def:ccbicombing}.

 Let $x_1,x_2,y_1,y_2 \in X$. Then for $t,s\in [0,1]$, we have 
 \begin{align*}
  \abs{t \ds{x_1,y_1} - s \ds{x_2,y_2}} \leq 
  \ds{x_1,x_2}+\ds{\gamma(x_1,y_1,t),\gamma(x_2,y_2,s)}.
 \end{align*}
 by the triangle inequality.
 Therefore $\gamma$ satisfies \ref{qparam-reg} of \cref{def:ccbicombing}
 with $\theta = \textrm{id}_{\Rp}$.
\end{proof}

\begin{example}
\label{eg:hypsp-canbicomb}
 Let $(X,d_X)$ be a geodesic $\delta$-hyperbolic space. For $x,y\in X$,
 we choose a geodesic $\gamma_{x,y}\colon [0,\ds{x,y}] \to X$ such that
 $\gamma_{x,y}(0)=x$ and $\gamma_{x,y}(\ds{x,y})=y$. We define
 $\gamma\colon X\times X\times [0,1] \to X$ by
 \begin{align*}
  \gamma(x,y,t):=\gamma_{x,y}(t/d_X(x,y)).
 \end{align*}
 Then $\gamma$ is a $(1,8\delta)$-geodesic coarsely convex bicombing.
 See \cite[Chapitre 2. 25.- Proposition]{MR1086648} for details. 
\end{example}

Let $X$ be a space equipped with an
$(E,C)$-geodesic coarsely convex bicombing 
$\gamma\colon X\times X \times [0,1]\to X$.

By \cref{lem:gccb-is-qgccb} and \cref{lem:extended_bicombing}, we have an extended bicombing on
$X\times \overline{X}$. Since $\gamma$ is a geodesic bicombing, we 
can upgrade it.

\begin{lemma}
\label{lem:g-extended_bicombing}
 Suppose that $X$ is proper and $\gamma$ is an 
 $(E,C)$-geodesic coarsely convex bicombing on $X$.
 Then there exists a map 
 \begin{align*}
  \rpgm\colon X\times \overline{X}\times \Rp\to X
 \end{align*} 
 satisfying the following:
 \begin{enumerate}[label=(\arabic*)]
  \item \label{item:g-extension}
        For $x,y\in X$, we have 
        $\rpgm(x,y,-) = \rp{\gamma(x,y,-)}$.

  \item  \label{item:g-approximation}
         For each $(e,x)\in X\times \partial X$, there exists a 
         sequence $(x_n)$ in $X$
         such that the sequence of maps 
         $(\rp{\gamma}(e,x_n,-):\Rp\to X)_{n}$ 
         converges to $\rpgm(e,x,-)$ uniformly on compact sets.

 \item \label{item:g-rpgm-lim-gprod}
        For each $(e,x)\in X\times \partial X$, we have
        \begin{align*}
         (\rpgm(e,x,t)\mid x)_e \to \infty \quad \text{as} \quad t\to \infty
        \end{align*}

  \item \label{item:geod-bicombing-at-infty}
        For each $(e,x)\in X\times \partial X$, the map
        $\rpgm(e,x,-)\colon \Rp\to X$ is a geodesic.
 \end{enumerate}
\end{lemma}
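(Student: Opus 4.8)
The plan is to upgrade the extended bicombing $\rpgm$ produced by \cref{lem:extended_bicombing} (which we may apply because \cref{lem:gccb-is-qgccb} shows $\gamma$ is a $(1,0,E,C,\mathrm{id})$-coarsely convex bicombing, so $\lambda=1$, $k=0$, and hence $k_1=1$) by replacing the pointwise-on-$\N$ control with uniform-on-compacts control, and the quasi-geodesic conclusion with a genuine geodesic conclusion. Properties \ref{item:g-extension}, \ref{item:g-rpgm-lim-gprod} are literally \ref{item:extension}, \ref{item:rpgm-lim-gprod}, so the work is entirely in \ref{item:g-approximation} and \ref{item:geod-bicombing-at-infty}, and the two are linked: once the approximating maps $\rp\gamma(e,x_n,-)$ are genuine unit-speed paths converging uniformly on compacta to $\rpgm(e,x,-)$, the limit is automatically $1$-Lipschitz and, being a pointwise limit of isometric embeddings on intervals, is itself an isometric embedding, i.e. a geodesic ray (truncated if $x$ were not at infinity, but here $x\in\partial X$ and \ref{item:rpgm-lim-gprod} forces the ray to be infinite).

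First I would fix $(e,x)\in X\times\partial X$ and take a sequence $(x_n)$ in $X$ with $x_n\to x$ in $\overline X$; since $X$ is proper, for each $n$ the reparametrised combing $\rp\gamma(e,x_n,-)\colon\Rp\to X$ is a $1$-Lipschitz map that is unit-speed on $[0,d_X(e,x_n)]$ and constant thereafter, and all these maps send $0$ to $e$. Because $X$ is proper and these maps are uniformly Lipschitz with a common starting point, the Arzelà–Ascoli theorem gives a subsequence converging uniformly on compact subsets of $\Rp$ to some map $\beta\colon\Rp\to X$ with $\beta(0)=e$. I would then check that $\beta$ represents the same data as $\rpgm(e,x,-)$: by \ref{item:approximation} some sequence of combings converges to $\rpgm(e,x,-)$ pointwise on $\N$, and by \cref{lem:ray-same-param} (or directly by the uniform-convexity estimate \ref{rpgm-qconvex} of \cref{lem:rpgm_cconvex}, applied with both base points $e$ and both endpoints in $\partial X$) two such limit rays that agree at integer times stay within a bounded distance; combined with \ref{item:rpgm-lim-gprod} and \cref{lem:D3}, the Gromov-product/visual estimates identify the boundary points, so one may simply redefine $\rpgm(e,x,-):=\beta$ without disturbing \ref{item:g-extension} (which only concerns $x\in X$, where nothing changes) or \ref{item:g-rpgm-lim-gprod}. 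Uniform convergence on compacta then transfers the unit-speed property: for $s,t$ in a fixed compact interval, $d_X(\beta(s),\beta(t))=\lim_n d_X(\rp\gamma(e,x_n,s),\rp\gamma(e,x_n,t))$, and for $n$ large enough that $d_X(e,x_n)\ge\max\{s,t\}$ this equals $|t-s|$, giving $d_X(\beta(s),\beta(t))=|t-s|$ for all $s,t$, which is exactly \ref{item:geod-bicombing-at-infty}.

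The one point requiring care — and the main obstacle — is making sure the uniform-on-compacta limit really can be taken to be the \emph{same} map $\rpgm(e,x,-)$ already fixed in \cref{lem:extended_bicombing}, rather than merely a map at bounded Hausdorff distance from it; this matters if later sections invoke both lemmas for the same $\rpgm$. There are two clean ways around this. One is to observe that the construction in \cref{lem:extended_bicombing} goes through \cite[Proposition 4.17]{FO-CCH}, and re-run that construction in the geodesic setting using Arzelà–Ascoli in place of the pointwise diagonal argument, so that the map produced is from the outset a uniform-on-compacta limit; then all four properties hold for that single map by construction. The other, if one prefers a black-box argument, is to note that for $\lambda=1,k=0$ a $1$-Lipschitz map $\Rp\to X$ that is unit-speed on an initial interval is determined up to reparametrisation by its image, and the coincidence-of-rays estimates (\cref{lem:ray-same-param}, \ref{item:rpgm-lim-gprod}, \cref{lem:D3}) force the two candidates to coincide exactly once one normalises by unit speed from $e$. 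Either route makes \ref{item:g-approximation} and \ref{item:geod-bicombing-at-infty} immediate, and \ref{item:g-extension}, \ref{item:g-rpgm-lim-gprod} are inherited verbatim from \cref{lem:extended_bicombing}.
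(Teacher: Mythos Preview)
Your proposal is correct and lands on the same core argument as the paper: Arzel\`a--Ascoli applied to the $1$-Lipschitz paths $\rp\gamma(e,x_n,-)$ starting at $e$ in the proper space $X$, with the limit inheriting unit speed from the approximants. The paper simply takes your first ``clean way around'' the consistency issue---it constructs $\rpgm(e,x,-)$ directly via Arzel\`a--Ascoli and a diagonal argument (citing the analogous construction for hyperbolic spaces in \cite{MR1086648}), without first passing through \cref{lem:extended_bicombing} at all, so your worry about reconciling two candidate maps never arises.
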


\begin{proof}
 We can use the same construction of geodesics as the one described in
 \cite[5-25 Th\'eor\`em]{MR1086648}. 
 Here we only give a sketch of the proof.

 Let $(e,x)\in X\times \partial X$. We choose 
 $(x_n)\in S_\infty(X)$ such that $(x_n)\in x$. 
 By Arzel\`{a}–Ascoli Theorem and standard diagonal arguments,
 We can take a subsequence $x_{n_k}$ such that the sequence of maps
 $(\rp{\gamma}(e,x_{n_k},-):\Rp\to X)_k$ 
 converges uniformly on compact sets. 
 This defines a map $\rpgm(e,x,-)\colon \Rp\to X$. 
 From the construction, it is easy to see that 
 this map is a geodesic, and 
 $(\rpgm(e,x,t)\mid x)_e \to \infty$ as $t\to \infty$.
\end{proof}

\begin{definition}
 We call the map $\rpgm$ given in \cref{lem:g-extended_bicombing} 
 an \emph{extended geodesic bicombing} on 
 $X\times \overline{X}$ corresponding to $\gamma$.
 For $(e,x)\in X\times \partial X$, we abbreviate 
 $\rpgm(e,x,-)$ by $\gamma_e^x$.
\end{definition}

\begin{lemma}
\label{lem:g-rpgm_cconvex}
 The extended geodesic bicombing $\rpgm$ on $X\times \overline{X}$ satisfies
 the following:
        Let $(e_1,x_1),(e_2,x_2)\in X\times \partial X$ 
        and let $a,b \in \Rp$.
       Set $x_1':=\rpgm(e_1,x_1,a)$ and $x_2':=\rpgm(e_2,x_2,b)$.
       Then, for $c \in [0,1]$, we have
        \begin{align*} 
         d_X(\rpgm(e_1,x_1,ca), \rpgm(e_2,x_2,cb)) 
         \leq (1-c)Ed_X(e_1,e_2) + cEd_X(x_1',x_2') + C.
        \end{align*}
\end{lemma}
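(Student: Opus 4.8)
The plan is to upgrade the coarse convexity inequality from \cref{lem:rpgm_cconvex}\ref{rpgm-qconvex}, which carries an additive error $D = 2(1+E)k_1 + C$, to the sharper inequality with additive error exactly $C$, by exploiting that the underlying bicombing $\gamma$ is genuinely geodesic rather than merely quasi-geodesic. The point is that the error $D$ in the quasi-geodesic case comes entirely from the slippage $\lambda^{-1}|t-s|d - k \le d(\gamma(\cdot,t),\gamma(\cdot,s)) \le \lambda|t-s|d + k$ between the parameter and arclength; when $\lambda = 1$ and $k = 0$, as in \cref{lem:gccb-is-qgccb}, that slippage vanishes and one recovers the clean constant.

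First I would reduce to the finite case. Fix $(e_1,x_1),(e_2,x_2) \in X\times\partial X$ and $a,b \in \Rp$, and set $x_1' = \rpgm(e_1,x_1,a)$, $x_2' = \rpgm(e_2,x_2,b)$. Using \cref{lem:g-extended_bicombing}\ref{item:g-approximation}, choose sequences $(p_n)$ in $X$ with $\rp{\gamma}(e_1,p_n,-) \to \rpgm(e_1,x_1,-)$ and $(q_n)$ in $X$ with $\rp{\gamma}(e_2,q_n,-) \to \rpgm(e_2,x_2,-)$, both uniformly on compact subsets of $\Rp$. On the compact set $[0,\max\{a,b,ca,cb\}]$, all four evaluation points $\rp{\gamma}(e_1,p_n,a)$, $\rp{\gamma}(e_2,q_n,b)$, $\rp{\gamma}(e_1,p_n,ca)$, $\rp{\gamma}(e_2,q_n,cb)$ converge to the corresponding points for the extended bicombing. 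So it suffices to prove the inequality with error $C$ for $\rp{\gamma}$ on pairs of honest points $(e_1,p),(e_2,q) \in X\times X$, and then pass to the limit (the inequality is closed under limits since $d_X$ is continuous).

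Next I would establish the inequality for $\rp{\gamma}$ on $X\times X$. Write $a' = \min\{a, d_X(e_1,p)\}$ and $b' = \min\{b, d_X(e_2,q)\}$; by the definition of the reparametrised bicombing, $\rp{\gamma}(e_1,p,t) = \gamma(e_1,p,t/d_X(e_1,p))$ for $t \le d_X(e_1,p)$ and equals $p$ thereafter, and since $\gamma$ is geodesic, $d_X(e_1,\rp\gamma(e_1,p,t)) = \min\{t,d_X(e_1,p)\}$ exactly; similarly on the other side. Now apply \cref{def:g-ccbicombing} directly with the parameters $a'/d_X(e_1,p) \in [0,1]$ and $b'/d_X(e_2,q) \in [0,1]$ in the roles of $a,b$ and the same $c$: the points $\gamma(e_1,p, ca'/d_X(e_1,p))$ and $\rp\gamma(e_1,p,ca)$ agree (and likewise on the other side, using that the reparametrisation is by arclength so multiplying the parameter by $c$ corresponds to multiplying the arclength parameter by $c$), while $y_1' := \gamma(e_1,p,a'/d_X(e_1,p)) = \rp\gamma(e_1,p,a) = x_1'$-analogue and $y_2'$ similarly. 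This yields precisely $d_X(\rp\gamma(e_1,p,ca),\rp\gamma(e_2,q,cb)) \le (1-c)E d_X(e_1,e_2) + cE d_X(\rp\gamma(e_1,p,a),\rp\gamma(e_2,q,b)) + C$.

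\textbf{The main obstacle} is the bookkeeping around the clamping in the reparametrisation when $a > d_X(e_1,p)$ (or $b > d_X(e_2,q)$), i.e.\ when the parameter overshoots the length of the geodesic. In that regime $\rp\gamma(e_1,p,a) = p = \rp\gamma(e_1,p,ca)$ for all $c$ making $ca > d_X(e_1,p)$, but for smaller $c$ the point $\rp\gamma(e_1,p,ca)$ lies strictly inside the geodesic — so the naive substitution $c \leftrightarrow $ scaled parameter does not quite line up, and one has to check the inequality separately in the sub-regime $ca \le d_X(e_1,p)$ versus $ca > d_X(e_1,p)$ on each side, treating the four combinations. Each case is a direct application of \cref{def:g-ccbicombing} (or the triangle inequality when the relevant endpoint has been clamped to $p$ or $q$), but one must verify that the constant never degrades beyond $C$. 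Alternatively — and this is probably cleaner — one can invoke the proof of \cref{lem:rpgm_cconvex}\ref{rpgm-qconvex} from \cite{FO-CCH} verbatim, simply tracking that every appearance of $k_1 = \lambda + k$ becomes $0$ under \cref{lem:gccb-is-qgccb}, so that $D = 2(1+E)k_1 + C$ collapses to $C$; that reduces the whole proof to the single remark that geodesic coarse convexity is the $\lambda=1,k=0$ specialisation and the error terms in the extension argument were entirely built out of $k_1$.
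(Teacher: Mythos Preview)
Your approach is correct and matches the paper's one-line proof, which simply invokes \cref{lem:g-extended_bicombing}\ref{item:g-approximation}: pass the $(E,C)$-inequality from finite geodesics to the limit via uniform-on-compacta convergence. Your ``main obstacle'' about clamping is a non-issue here, since $x_1,x_2\in\partial X$ forces $d_X(e_1,p_n)\to\infty$ and $d_X(e_2,q_n)\to\infty$, so for fixed $a,b$ the approximating geodesics are eventually long enough that no clamping occurs.
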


\begin{proof}
 The statement follows immediately 
 from~\ref{item:approximation} of \cref{lem:g-extended_bicombing}.
\end{proof}


\begin{remark}
 All statements in \cref{lem:univ-const} hold for $\gamma_e^x$.
\end{remark}

\begin{lemma}
\label{lem:gprod-nearpts}
 For $e,x,y\in X$, we have
 \begin{align*}
  (x\mid y)_e\geq \frac{\min\{d_X(e,x),d_X(e,y)\}}{2Ed_X(x,y)}.
 \end{align*}
\end{lemma}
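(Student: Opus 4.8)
The plan is to unwind \cref{def:Gromov-prod} and bound from below the three quantities appearing in the minimum that defines $(x\mid y)_e$. Since both $(x\mid y)_e$ and the claimed right-hand side are symmetric in $x$ and $y$, I would assume without loss of generality that $\ds{e,x}\leq\ds{e,y}$ and write $r:=\ds{e,x}$, $s:=\ds{x,y}$, and $\tau:=r/(2Es)$. The inequality $\ds{e,x}\geq\tau$ is straightforward (and then $\ds{e,y}\geq\ds{e,x}\geq\tau$ as well), so the core of the argument is to show
\begin{align*}
\sup\{t\in\Rp : \ds{\rp{\gamma(e,x,t)},\rp{\gamma(e,y,t)}}\leq D_1\}\geq \tau,
\end{align*}
i.e. that the two reparametrised geodesics stay within $D_1$ of each other for all $t\leq\tau$.

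First I would fix $t\leq\tau$ (so in particular $t\leq r\leq\ds{e,y}$), so that $\rp{\gamma(e,x,t)}=\gamma(e,x,t/r)$ and $\rp{\gamma(e,y,t)}=\gamma(e,y,t/\ds{e,y})$; note these are evaluated at the \emph{different} scales $1/r$ and $1/\ds{e,y}$. To compare them I would insert the point $\gamma(e,y,t/r)$ and use the triangle inequality:
\begin{align*}
\ds{\rp{\gamma(e,x,t)},\rp{\gamma(e,y,t)}}
&\leq \ds{\gamma(e,x,t/r),\gamma(e,y,t/r)} \\
&\quad + \ds{\gamma(e,y,t/r),\gamma(e,y,t/\ds{e,y})}.
\end{align*}
For the first summand I would invoke coarse convexity, property \ref{qconvex} of \cref{def:ccbicombing} (available by \cref{lem:gccb-is-qgccb}), with $x_1=x_2=e$, $y_1=x$, $y_2=y$, $a=b=1$ and $c=t/r\in[0,1]$; this gives $\ds{\gamma(e,x,t/r),\gamma(e,y,t/r)}\leq (t/r)Es+C$. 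For the second summand I would use that $\gamma$ is a geodesic bicombing together with the triangle inequality $\ds{e,y}-\ds{e,x}\leq\ds{x,y}=s$, obtaining $\ds{\gamma(e,y,t/r),\gamma(e,y,t/\ds{e,y})}=t(\ds{e,y}-r)/r\leq ts/r$. Combining the two estimates,
\begin{align*}
\ds{\rp{\gamma(e,x,t)},\rp{\gamma(e,y,t)}}\leq \frac{ts(E+1)}{r}+C.
\end{align*}

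Finally I would substitute $t\leq\tau=r/(2Es)$ and use $E\geq 1$, so that $E+1\leq 2E$: the right-hand side is then at most $(E+1)/(2E)+C\leq 1+C\leq D_1$, where the last inequality holds since $D_1=2D+2\geq 2C+2\geq 1+C$. Hence $\ds{\rp{\gamma(e,x,t)},\rp{\gamma(e,y,t)}}\leq D_1$ for every $t\leq\tau$, so the supremum above is at least $\tau$, and therefore $(x\mid y)_e\geq\tau$, as required. The step I expect to be the main obstacle is precisely the comparison of $\rp{\gamma(e,x,-)}$ and $\rp{\gamma(e,y,-)}$ at different reparametrisation scales: one cannot feed these two points directly into \ref{qconvex}, and the correction term produced by the rescaling is exactly what makes $\ds{x,y}$ enter the denominator of the bound.
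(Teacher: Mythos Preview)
Your proof is correct and follows essentially the same route as the paper's. The only cosmetic difference is that the paper applies \ref{qconvex} once with the asymmetric parameters $a=1$, $b=u/\ds{e,y}$ (where $u=\min\{\ds{e,x},\ds{e,y}\}$), so that the estimate $\ds{\rp\gamma(e,x,u),\rp\gamma(e,y,u)}\leq 2\ds{x,y}$ already absorbs the reparametrisation mismatch before contracting, whereas you apply convexity with $a=b=1$ and handle the mismatch separately via the triangle inequality; both routes land on the same bound $1+C\leq D_1$.
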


\begin{proof}
 Set $u:=\min\{d_X(e,x),d_X(e,y)\}$. We have
 \begin{align*}
  d_X(\rp\gamma(e,x,u),\rp\gamma(e,y,u))\leq 
  d_X(x,y) + \abs{d_X(e,x)-d_X(e,y)} \leq 2d_X(x,y).
 \end{align*}
 Set $c:=1/(2Ed_X(x,y))$, we have
 \begin{align*}
  d_X(\rp\gamma(e,x,cu),\rp\gamma(e,y,cu))\leq 
  cEd_X(\rp\gamma(e,x,u),\rp\gamma(e,y,u)) +C
  \leq 1+C.
 \end{align*}
 Therefore 
 \begin{align*}
  (x\mid y)_e\geq cu = \frac{\min\{d_X(e,x),d_X(e,y)\}}{2Ed_X(x,y)}.
 \end{align*}
\end{proof}

\subsection{$\gamma$-convex subspaces}

\begin{definition}
 Let $X$ be a metric space with $(E,C)$-geodesic coarsely convex bicombing 
 $\gamma\colon X\times X \times [0,1]\to X$. We say that a subspace $Y\subset X$
 is $\gamma$-convex if for all $x,y\in Y$ and all $t\in [0,1]$, we have $\gamma(x,y,t)\in Y$.
\end{definition}

\begin{proposition}
\label{prop:gamma-convex-subsp}
 Let $X$ be a metric space with $(E,C)$-geodesic coarsely convex bicombing 
 and $Y\subset X$ be a $\gamma$-convex subspace. 
 Then the restriction $\gamma_Y$ of $\gamma$ on 
 $\colon Y\times Y \times [0,1]$ is a $(E,C)$-geodesic coarsely convex 
 bicombing on $Y$.
 Moreover, the inclusion $\iota\colon Y\hookrightarrow X$ extends to a topological embedding 
 $\bar{\iota}\colon \overline{Y}\hookrightarrow \overline{X}$.
\end{proposition}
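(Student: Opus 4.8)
The plan is to verify the two assertions separately, both being essentially bookkeeping once the definitions are unwound. For the first part, let $Y \subset X$ be $\gamma$-convex and set $\gamma_Y := \gamma|_{Y \times Y \times [0,1]}$. Since $Y$ is $\gamma$-convex, $\gamma_Y$ indeed takes values in $Y$, so it is a well-defined map $Y \times Y \times [0,1] \to Y$. The endpoint conditions $\gamma_Y(x,y,0)=x$, $\gamma_Y(x,y,1)=y$ and the geodesic equality $d_X(\gamma_Y(x,y,t),\gamma_Y(x,y,s)) = |t-s|\,d_X(x,y)$ hold because they hold for $\gamma$ and $Y$ carries the restricted metric $d_Y = d_X|_{Y\times Y}$; in particular $d_Y(x,y) = d_X(x,y)$ for $x,y\in Y$, so $\gamma_Y$ is a geodesic bicombing on $(Y,d_Y)$. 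The coarse convexity inequality of \cref{def:g-ccbicombing} is likewise inherited: given $x_1,x_2,y_1,y_2 \in Y$ and $a,b\in[0,1]$, the points $y_1' = \gamma(x_1,y_1,a)$ and $y_2' = \gamma(x_2,y_2,b)$ lie in $Y$ by $\gamma$-convexity, and the inequality
\begin{align*}
 d_X(\gamma(x_1,y_1,ca),\gamma(x_2,y_2,cb)) \leq (1-c)E d_X(x_1,x_2) + cE d_X(y_1',y_2') + C
\end{align*}
holds in $X$ with all the quantities already being intrinsic distances in $Y$. Hence $\gamma_Y$ is an $(E,C)$-geodesic coarsely convex bicombing on $Y$.

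For the second part, I would first check that the Gromov product on $Y$ (with respect to $\gamma_Y$ and a base point $e \in Y$) agrees with the restriction of the Gromov product on $X$. This follows because for $x,y \in Y$ the reparametrised combing $\rp{\gamma_Y}(e,x,-)$ is literally $\rp{\gamma}(e,x,-)$ (both being $\gamma$ followed by the same rescaling, and $d_Y(e,x)=d_X(e,x)$), so the defining expression in \cref{def:Gromov-prod} — the constants $k_1, D, D_1$ depending only on $\lambda=1,k=0,E,C$ — gives $(x\mid y)_e^Y = (x\mid y)_e^X$. Consequently $S_\infty(Y) = \{(x_i) \in S_\infty(X) : x_i \in Y\}$ with the induced equivalence relation, and the inclusion $\iota$ induces an injection $\partial\iota\colon \partial Y \hookrightarrow \partial X$; together with the isometric embedding $Y \hookrightarrow X$ this gives a bijection $\bar\iota\colon \overline Y \to \iota(Y) \cup \partial\iota(\partial Y) \subset \overline X$ onto its image.

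It remains to show $\bar\iota$ is a topological embedding, i.e. that the subspace topology on $\overline Y$ inherited from $\overline X$ coincides with $\mathcal{T}_{(\cdot\mid\cdot)}$ on $\overline Y$ built from the intrinsic product. Since the topologies on both spaces are generated (as uniformities) by the sets $V_n$ of \cref{def:gromov-bdry}, and since the intrinsic product on $\overline Y$ is the restriction of the one on $\overline X$ — here I would invoke \cref{eq:ext-cp} and the agreement of products on $Y\times Y$, noting that a sequence in $Y$ converging to a boundary point of $X$ automatically represents a point of $\partial Y$ — the basic entourages satisfy $V_n^Y = V_n^X \cap (\overline Y \times \overline Y)$, and thus the neighbourhood bases $V_n^Y[x]$ are exactly the traces of $V_n^X[x]$. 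This yields that $\bar\iota$ is a homeomorphism onto its image. The main obstacle — and the step deserving the most care — is this last topological comparison: one must be sure that the extension formula \eqref{eq:ext-cp} for the product of boundary points does not "see" sequences leaving $Y$, i.e. that $(x\mid y)^{\overline Y}_e = (x\mid y)^{\overline X}_e$ for $x,y\in\overline Y$; for boundary points this requires checking that the supremum over representing sequences in $X$ is attained (up to the multiplicative constant $\Omega$ from \cref{lem:univ-const}) already by sequences in $Y$, which one gets from $\gamma$-convexity by pushing any representing sequence onto the combing rays $\gamma_e^x$, which stay in $Y$.
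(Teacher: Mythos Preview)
The paper states \cref{prop:gamma-convex-subsp} without proof, treating it as routine, so there is no argument to compare against directly. Your proposal is correct and isolates precisely the one nontrivial point.

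The first assertion is pure restriction, and your verification is complete. For the second, you correctly observe that the Gromov products on $Y\times Y$ coincide with their restrictions from $X$ (same bicombing, same metric, same constants $k_1,D,D_1$), which immediately gives the set-theoretic injection $\overline{Y}\hookrightarrow\overline{X}$. The only genuinely delicate step is the one you flag: comparing the \emph{extended} products \cref{eq:ext-cp} on $\overline{Y}$ with those on $\overline{X}$. Your resolution is right: for $x,y\in\overline{Y}$ one has $(x\mid y)_e^Y \leq (x\mid y)_e^X$ since the supremum in \cref{eq:ext-cp} ranges over fewer sequences, while the reverse inequality up to a factor of $\Omega^2$ follows by taking the $Y$-combing rays $t\mapsto\rpgm(e,x,t)$, $t\mapsto\rpgm(e,y,t)$ (constructed via \cref{lem:g-extended_bicombing} applied to $Y$, hence lying in $Y$) as representing sequences and applying \cref{lem:univ-const}\ref{lem:qrayultmvisu} twice. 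This makes the entourage bases $\{V_n^Y\}$ and $\{V_n^X\cap(\overline{Y}\times\overline{Y})\}$ mutually cofinal, so the uniformities and hence the topologies agree. Your sketch already contains this; only the bookkeeping of the $\Omega$-factors is left implicit.
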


\begin{corollary}
\label{cor:sub-sp-closure}
 Let $X$ be a metric space with $(E,C)$-geodesic coarsely convex bicombing 
 and $Y\subset X$ be a $\gamma$-convex subspace. Then the ideal boundary of $Y$ with respect to
 $\gamma_Y$ is homeomorphic to $\overline{\iota(Y)}\setminus \iota(Y)$, 
 where $\overline{\iota(Y)}$ is the closure of $\iota(Y)$ in $\overline{X}$.
\end{corollary}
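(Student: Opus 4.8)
The plan is to obtain the corollary as a direct consequence of \cref{prop:gamma-convex-subsp}. That proposition supplies a topological embedding $\bar\iota\colon\overline{Y}\hookrightarrow\overline{X}$ extending the inclusion $\iota\colon Y\hookrightarrow X$; and restricting a topological embedding to a subspace yields again a topological embedding, so $\bar\iota$ restricts to a homeomorphism of $\partial Y$ onto $\bar\iota(\partial Y)$, the latter carrying the subspace topology inherited from $\overline{X}$. Hence the entire task reduces to the set-level identity $\bar\iota(\partial Y)=\overline{\iota(Y)}\setminus\iota(Y)$, with the closure taken in $\overline{X}$.

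First I would fix the base point $e\in Y$ and record that, for a $\gamma$-convex $Y$, the Gromov product $(\cdot\mid\cdot)^Y_e$ is simply the restriction of $(\cdot\mid\cdot)^X_e$ to $Y\times Y$: every point $\rp{\gamma(e,x,t)}$ occurring in the defining formula lies in $Y$ by $\gamma$-convexity, the metric on $Y$ is the restricted metric, and the constant $D_1$ depends only on $\lambda,k,E,C$, which are unchanged. In particular $S_\infty(Y)\subseteq S_\infty(X)$ with the two equivalence relations agreeing on $S_\infty(Y)$, so $\bar\iota$ maps $\partial Y$ into $\partial X$; alternatively, any $w\in\partial Y$ is represented by $(y_i)$ with $(y_i\mid y_j)^Y_e\to\infty$, whence $d_X(e,y_i)=d_Y(e,y_i)\to\infty$, so $(\iota(y_i))$ cannot converge in $X$, while $\iota(y_i)\to\bar\iota(w)$ in $\overline{X}$ (by continuity of $\bar\iota$ and density of $Y$ in $\overline{Y}$ from \cref{lem:Gromov-cptn}), forcing $\bar\iota(w)\in\partial X$. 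In particular $\bar\iota(\partial Y)\cap\iota(Y)=\emptyset$, so, since $\bar\iota$ is injective with $\bar\iota(Y)=\iota(Y)$, we get $\bar\iota(\partial Y)=\bar\iota(\overline{Y})\setminus\iota(Y)$.

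Next I would identify $\bar\iota(\overline{Y})$ with $\overline{\iota(Y)}$. The inclusion $\bar\iota(\overline{Y})\subseteq\overline{\iota(Y)}$ is immediate from continuity of $\bar\iota$ together with density of $Y$ in $\overline{Y}$. For the reverse inclusion, $\overline{Y}$ is compact by \cref{lem:Gromov-cptn} (the space $X$, hence $Y$, being proper in the setting of this section), so $\bar\iota(\overline{Y})$ is compact, hence closed in the metrizable, in particular Hausdorff, space $\overline{X}$; as $\iota(Y)$ is dense in it, $\overline{\iota(Y)}\subseteq\bar\iota(\overline{Y})$. Combining, $\bar\iota(\partial Y)=\bar\iota(\overline{Y})\setminus\iota(Y)=\overline{\iota(Y)}\setminus\iota(Y)$, and the homeomorphism $\partial Y\to\bar\iota(\partial Y)$ of the first paragraph becomes a homeomorphism $\partial Y\to\overline{\iota(Y)}\setminus\iota(Y)$, proving the corollary.

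The main obstacle is the closedness of $\bar\iota(\overline{Y})$ in $\overline{X}$: this is exactly where compactness of $\overline{Y}$ is needed, which in turn rests on $Y$ being proper. One should either verify that properness of $X$ together with $\gamma$-convexity of $Y$ forces $Y$ to be proper, or add the hypothesis (harmless in the applications) that $Y$ be closed, for which properness of $X$ passes immediately to $Y$. A minor secondary check is the compatibility of base points and structure constants making $(\cdot\mid\cdot)^Y_e$ the genuine restriction of $(\cdot\mid\cdot)^X_e$, which is what renders the inclusion $\partial Y\hookrightarrow\partial X$ transparent.
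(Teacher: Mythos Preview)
The paper states both \cref{prop:gamma-convex-subsp} and \cref{cor:sub-sp-closure} without proof, so there is no argument in the paper to compare against. Your derivation of the corollary from the proposition is the natural one and is correct: once $\bar\iota\colon\overline{Y}\hookrightarrow\overline{X}$ is a topological embedding, the restriction to $\partial Y$ is a homeomorphism onto its image, and the set-theoretic identification $\bar\iota(\partial Y)=\overline{\iota(Y)}\setminus\iota(Y)$ follows exactly as you outline, via the coincidence of the two Gromov products on $Y\times Y$ and the density of $Y$ in $\overline{Y}$.

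The caveat you raise in your final paragraph is genuine and well spotted. Without properness of $Y$ (or at least closedness of $Y$ in $X$ together with completeness), the image $\bar\iota(\overline{Y})$ need not be closed in $\overline{X}$, and indeed $\overline{\iota(Y)}\setminus\iota(Y)$ could then contain points of $X\setminus Y$ which cannot lie in $\bar\iota(\partial Y)\subset\partial X$. In the paper's actual applications (\cref{rem:dX_k-emb} and the material on $\augn{Z}{n}$), each $X_k=\xi^{-1}(\starNb(k))$ is a closed subspace of a proper space $Z$, hence itself proper, so the hypothesis is met; but you are right that the corollary as stated is slightly underspecified, and your suggested fix (assume $Y$ closed in the proper space $X$) is exactly what is used downstream.
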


\section{Continuous at infinity}
\label{sec:cont-at-infin}
We study maps between geodesic coarsely convex spaces
which are not necessarily continuous, although, induce continuous maps
between ideal boundaries. The main result of this section will be 
used to prove \cref{prop:retractionAn}, which is a key proposition 
for results in \cref{sec:cohomology-ideal-boundary-tree}

\begin{definition}
\label{def:conti-at-infty}
 Let $X$ be a space with a geodesic coarsely convex bicombing.
 Let $A\subset X$ be a subspace and $\overline{A}$ be the closure of 
 $A$ in $\overline{X}$.
 Let $U$ be a topological space.
 We say that a map $F\colon \overline{A}\to U$ is \emph{continuous at infinity} 
 if $F$ is continuous at all $x\in \overline{A}\setminus A$.
\end{definition}

Let $(X,d_X)$ be a space equipped with an
$(E,C)$-geodesic coarsely convex bicombing 
$\gamma\colon X\times X \times [0,1]\to X$.

\begin{lemma}
\label{lem:qiemb-conti-at-infinity}
 Let $A\subset X$ be a subspace and $\overline{A}$ be the closure of 
 $A$ in $\overline{X}$. 
Let 
 $f\colon \overline{A} \to \overline{X}$ be a map such that:
 \begin{enumerate}[label=(\arabic*)]
  \item $\sup\{d_X(x,f(x)):x\in A\}<\infty$,  and
  \item for $x\in \partial X\cap \overline{A}$, $f(x)=x$.
 \end{enumerate}
 Then $f$ is continuous at infinity.
\end{lemma}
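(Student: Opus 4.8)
The plan is to show that for a boundary point $\xi \in \partial X \cap \overline{A}$ and a net (or sequence, since $\overline{X}$ is metrizable) $(a_n)$ in $A$ with $a_n \to \xi$, we have $f(a_n) \to \xi = f(\xi)$; combined with the behavior on boundary points approaching $\xi$ (which is handled by the same estimate, since $f$ fixes $\partial X \cap \overline{A}$ pointwise), this gives continuity of $f$ at $\xi$. The key quantitative input is that the topology on $\overline{X}$ is controlled by the extended Gromov product $(\cdot \mid \cdot)_e$ via the neighborhood basis $\{V_n[x]\}$, so it suffices to prove that $(f(a_n) \mid \xi)_e \to \infty$ whenever $(a_n \mid \xi)_e \to \infty$ and $d_X(a_n, f(a_n))$ stays bounded, say by $R := \sup\{d_X(x,f(x)) : x \in A\} < \infty$.

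First I would reduce to controlling $(a_n \mid f(a_n))_e$: by \cref{lem:qrayultmvisu} (the quasi-ultrametric inequality \ref{lem:qrayultmvisu} of \cref{lem:univ-const}, valid on $\overline{X}$),
\begin{align*}
 (f(a_n) \mid \xi)_e \geq \DA^{-1} \min\{(f(a_n) \mid a_n)_e,\ (a_n \mid \xi)_e\},
\end{align*}
so since $(a_n \mid \xi)_e \to \infty$ it is enough to show $(a_n \mid f(a_n))_e \to \infty$. Now the two points $a_n$ and $f(a_n)$ lie within distance $R$ of each other, and both are points of $X$; the Gromov product of two nearby points, measured from $e$, is large when both are far from $e$. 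Concretely, $d_X(e, a_n) \to \infty$ (since $a_n \to \xi \in \partial X$, which forces $d_X(e,a_n)\to\infty$ by \cref{lem:Gromov-cptn}), and then \cref{lem:gprod-nearpts} gives
\begin{align*}
 (a_n \mid f(a_n))_e \geq \frac{\min\{d_X(e,a_n),\ d_X(e,f(a_n))\}}{2E\, d_X(a_n,f(a_n))} \geq \frac{d_X(e,a_n) - R}{2ER} \longrightarrow \infty.
\end{align*}
(If $d_X(a_n, f(a_n)) = 0$ the product is $+\infty$ by convention, so we may assume the denominator is positive; and $d_X(e, f(a_n)) \geq d_X(e,a_n) - R$.) This establishes $(f(a_n)\mid \xi)_e \to \infty$, i.e. eventually $f(a_n) \in V_m[\xi]$ for every fixed $m$, which is exactly convergence $f(a_n)\to\xi$ in $\overline X$.

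It remains to handle boundary points converging to $\xi$: if $(\eta_n)$ is a sequence in $\partial X \cap \overline{A}$ with $\eta_n \to \xi$, then $f(\eta_n) = \eta_n \to \xi = f(\xi)$ trivially, and for mixed sequences one splits into the two cases above and uses again \ref{lem:qrayultmvisu} to combine the estimates. Since $\overline{X}$ is metrizable (hence sequential), sequential continuity at each point of $\overline{A}\setminus A$ yields continuity at infinity. I expect the only mildly delicate point to be bookkeeping the base point $e$ relative to the base point implicit in \cref{lem:gprod-nearpts} and making sure the "nearby points far from $e$" estimate is applied with the right constants; the conceptual content is entirely contained in \cref{lem:gprod-nearpts} and the quasi-ultrametric inequality, so there is no serious obstacle.
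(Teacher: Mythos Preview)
Your proof is correct and follows essentially the same route as the paper: reduce via the quasi-ultrametric inequality to showing $(a_n\mid f(a_n))_e\to\infty$, then use coarse convexity of the bicombing to bound this product below by a constant multiple of $\min\{d_X(e,a_n),d_X(e,f(a_n))\}$. The only cosmetic difference is that the paper redoes the computation of \cref{lem:gprod-nearpts} inline (with $2K$ in place of $2d_X(a_n,f(a_n))$), whereas you simply invoke \cref{lem:gprod-nearpts}.
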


\begin{proof}
 Let $x\in \partial X\cap \overline{A}$. 
 We show that $f$ is continuous at $x$. 
 Set $K:=\sup\{d_X(x,f(x)):x\in A\}$.
 Let $(x_n)$ be a sequence in $\overline{A}$
 converging to $x$. We can assume
 without loss of generality that $x_n\in A$ for all $n$.
 Set $t_n:=\min\{d_X(e,f(x_n)),d_X(e,x_n)\})$.
 Since $\abs{d_X(e,f(x_n))-d_X(e,x_n)}\leq K$, we have
\begin{align*}
 d_X(\rp\gamma(e,f(x_n),t_n),\rp\gamma(e,x_n,t_n))\leq 
 K + d_X(x_n,f(x_n)) \leq 2K.
\end{align*} 
Set
\begin{align*}
  p_n&:= \rp\gamma(e,f(x_n),(1/(2KE)t_n)),\\
  q_n&:= \rp\gamma(e,x_n,(1/(2KE)t_n)).
 \end{align*}
 We have
 \begin{align*}
  d_X(p_n,q_n)
  &\leq \frac{1}{2KE}E
  d_X(\rp\gamma(e,f(x_n),t_n),\rp\gamma(e,x_n,t_n))
  + C \leq C+1
 \end{align*}
 So we have $(x_n\mid f(x_n))_{e} \geq 1/(2KE)t_n$.
 Then,
 \begin{align*}
  {(x\mid f(x_n))}_e\geq 
  \Omega^{-1}\min\{{(x\mid x_n)}_{e}, {(x_n\mid f(x_n))}_{e}\}
  \to \infty
 \end{align*}
 It follows that $(f(x_n))$ converges to $x$.
\end{proof}

\subsection{Image of the exponential map}
We fix a base point $e\in X$.
We define a map $\exp \colon \partial X \times \Rp \to X$ by
$\exp(x,t):= \rpgm(e,x,t)$. 
Set 
\begin{align*}
 \exp(\OdX) &:= \{\rpgm(e,x,t):x\in \partial X_k, t\in \Rp\}\\
 \overline{\exp(\OdX)} &:= \exp(\OdX)\cup \partial X_k.
\end{align*}
We remark that $\exp(\OdX)$ is not necessarily coarsely dense
in $X$. In~\cite[Section 5.5]{FO-CCH}, 
a coarsely equivalence map $\tilde{\varphi}\colon X \to \exp(\OdX_k)$ is 
contracted. We review the construction.

Let $X^{(0)}$ be a subset of $X$ such that
$X^{(0)}$ is 2-dense in $X$ and 1-discrete, that is,
for all $v,w\in X^{(0)}$, if $v\neq w$
then $d_X(v,w)\geq 1$, 
and, for all $v\in X$, there exists $v'\in X^{(0)}$ with $\ds{v,v'}\leq 2$.
Set $D_6:=2D_1E+D$ and $Y:= B_{D_6}(\exp(\OdX))$. 

Let $\chi\colon \Rp\to \Rp$ is a proper map given 
in~\cite[Section 5.5]{FO-CCH}. By the construction of $\chi$, for
$v\in X^{(0)}$, we have $\rp\gamma(e,v,\chi(d_X(e,v)))\in Y$,
so we define a map
$\varphi\colon X^{(0)}\to Y$ by
\begin{align*}
 \varphi(v):=\rp \gamma(e,v,\chi(d_X(e,v))).
\end{align*}

We define a map 
$\overline{\varphi}\colon \overline{X^{(0)}}\to \overline{Y}$ by
\begin{align*}
 \overline{\varphi}(v):=
 \begin{cases}
  \varphi(v) & \text{ if } v\in X^{(0)}\\
  v & \text{ if } v\in \partial X
 \end{cases}.
\end{align*}

Here we remark that $\overline{X^{(0)}}\setminus X^{(0)} = \partial X$
since $X^{(0)}$ is 2-dense in $X$.

\begin{lemma}
\label{lem:varphi-conti}
 The map $\overline{\varphi}\colon \overline{X^{(0)}}\to \overline{Y}$
 is continuous at infinity.
\end{lemma}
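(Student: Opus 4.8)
The goal is to show that $\overline{\varphi}\colon \overline{X^{(0)}}\to\overline{Y}$ is continuous at infinity, i.e.\ continuous at each $x\in\overline{X^{(0)}}\setminus X^{(0)}=\partial X$. The natural strategy is to reduce this to \cref{lem:qiemb-conti-at-infinity}, which already packages exactly the conclusion we want: if a map on $\overline{A}$ moves points of $A$ a bounded distance and fixes boundary points, then it is continuous at infinity. Here $A=X^{(0)}$, and $\overline{\varphi}$ already fixes $\partial X$ by definition, so the only thing to verify is the first hypothesis, namely $\sup\{d_X(v,\varphi(v)):v\in X^{(0)}\}<\infty$. However, this displacement bound is generally \emph{false} — the point $\varphi(v)=\rp\gamma(e,v,\chi(d_X(e,v)))$ lies on the ray from $e$ through $v$ at the reparametrised time $\chi(d_X(e,v))$, and since $\chi$ is merely a proper map (not the identity), $\varphi(v)$ need not be close to $v$. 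So \cref{lem:qiemb-conti-at-infinity} cannot be applied verbatim to $\overline{\varphi}$ itself.

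The fix is to observe that what matters for continuity at infinity is not closeness of $v$ to $\varphi(v)$ in $X$, but rather that $v$ and $\varphi(v)$ have large Gromov product as $v\to\partial X$. Concretely, I would first show directly that for a sequence $(v_n)$ in $X^{(0)}$ converging to a boundary point $x\in\partial X$, one has $(v_n\mid\varphi(v_n))_e\to\infty$. This follows because $\varphi(v_n)=\rp\gamma(e,v_n,\chi(d_X(e,v_n)))$ sits on the ray $\rp\gamma(e,v_n,-)$; by \ref{gprod-contraction} of \cref{lem:univ-const} (or rather its analogue for pairs of points, via \cref{lem:qrayultmvisu} and \cref{lem:gprod-nearpts}), the Gromov product $(v_n\mid\rp\gamma(e,v_n,t))_e$ is bounded below by $\Omega^{-1}\min\{d_X(e,v_n),\, t\}$ up to constants, and since $\chi$ is proper while $d_X(e,v_n)\to\infty$, both arguments of the $\min$ tend to $\infty$. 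Hence $(v_n\mid\varphi(v_n))_e\to\infty$. Then, exactly as in the last lines of the proof of \cref{lem:qiemb-conti-at-infinity}, the quasi-ultrametric inequality \ref{lem:qrayultmvisu} gives
\begin{align*}
 (x\mid\varphi(v_n))_e \geq \Omega^{-1}\min\{(x\mid v_n)_e,\,(v_n\mid\varphi(v_n))_e\}\to\infty,
\end{align*}
so $\varphi(v_n)=\overline{\varphi}(v_n)\to x$ in $\overline{Y}$, which is the desired continuity at $x$.

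One further point needs care: the topology on $\overline{Y}$ is the one coming from $Y$'s own coarsely convex bicombing (the restriction $\gamma_Y$ via \cref{prop:gamma-convex-subsp}), whereas the computation above is carried out in $\overline{X}$. By \cref{cor:sub-sp-closure}, since $Y=B_{D_6}(\exp(\OdX))$ and $\exp(\OdX)$ is the image of the exponential map, $Y$ is (up to the coarse-density adjustment) $\gamma$-convex, and the closure $\overline{Y}$ embeds topologically into $\overline{X}$ with $\overline{Y}\setminus Y=\partial X$; convergence in $\overline{Y}$ is therefore detected by convergence in $\overline{X}$, so the Gromov-product estimate transfers. The main obstacle, then, is not any single hard estimate but rather correctly handling the reparametrisation $\chi$: one must use only that $\chi$ is proper, and feed this into the contraction property of Gromov products along rays to replace the (false) bounded-displacement hypothesis of \cref{lem:qiemb-conti-at-infinity} with the (true) divergence of $(v_n\mid\varphi(v_n))_e$. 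Everything else is a routine reprise of the argument already given for \cref{lem:qiemb-conti-at-infinity}.
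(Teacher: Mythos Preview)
Your approach is correct, but you have taken an unnecessary detour. You first estimate $(v_n\mid\varphi(v_n))_e$ (which requires you to invoke an ``analogue of \ref{gprod-contraction} for pairs of points'' that is not actually stated in the paper), and only then combine this with $(x\mid v_n)_e$ via the quasi-ultrametric inequality \ref{lem:qrayultmvisu}. The paper skips the intermediate step entirely: since $x\in\partial X$ and $\varphi(x_n)=\gamma(e,x_n,t_n)$ with $t_n d_X(e,x_n)=\chi(d_X(e,x_n))$, item~\ref{gprod-contraction} of \cref{lem:univ-const} applies \emph{directly} to give
\[
 (x\mid \varphi(x_n))_e \;\geq\; \Omega^{-1}\min\{(x\mid x_n)_e,\ \chi(d_X(e,x_n))\}\ \longrightarrow\ \infty,
\]
using only that $(x\mid x_n)_e\to\infty$ and that $\chi$ is proper. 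In other words, you overlooked that the boundary point $x$ you already have in hand is precisely the first argument that \ref{gprod-contraction} asks for; there is no need to manufacture a point-to-point analogue and then glue via the quasi-ultrametric inequality.

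Your side remark about the topology on $\overline{Y}$ is a fair caution, but in the paper $\overline{Y}$ is simply the closure of $Y$ inside $\overline{X}$ (not an independently constructed bordification), so convergence in $\overline{X}$ is all that is needed; no appeal to \cref{prop:gamma-convex-subsp} or $\gamma$-convexity of $Y$ is required.
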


\begin{proof}
 Let $x\in \partial X$. We show that $\overline{\varphi}$ is continuous
 at $x$. Let $(x_n)$ be a sequence in $\overline{X^{(0)}}$
 converging to $x$. We can assume
 without loss of generality that $x_n\in X^{(0)}$ for all $n$.
 We remark $(x\mid x_n)_e\to \infty$.
 By \ref{gprod-contraction} of \cref{lem:univ-const}, we have
 \begin{align*}
  (x\mid \varphi(x_n))_e\geq \Omega 
  \min\{(x\mid x_n), \chi(d_X(e,x_n))\}
  \to \infty
 \end{align*}
 since $\chi$ is proper. It follows that $(\varphi(x_n))$ 
 converges to $x$.
\end{proof}

Let $\iota\colon X\to X^{(0)}$ be a restriction
such that for $x\in X$, we have 
$d_X(x,\iota(x))\leq 2$. 
Let $j\colon Y\to \exp(\OdX)$ be a retraction
such that, for $x\in Y$, we have 
$d_X(x,j(x))\leq D_6$. Here we do not require that $i$ and $j$ are continuous.
We define a map 
$\overline{\iota}\colon \overline{X}\to \overline{X^{(0)}}$ and 
$\overline{j}\colon \overline{Y}\to \overline{\exp(\OdX)}$ 
by
\begin{align*}
 \overline{\iota}(v):=
 \begin{cases}
  \iota(v) & \text{ if } v\in X^{(0)}\\
  v & \text{ if } v\in \partial X
 \end{cases} 
 \quad \text{and} \quad
 \overline{j}(v):=
 \begin{cases}
  j(v) & \text{ if } v\in Y\\
  v & \text{ if } v\in \partial X 
 \end{cases}.                   %
\end{align*}

By \cref{lem:qiemb-conti-at-infinity},
the maps $\overline{\iota}$ and $\overline{j}$ 
are continuous at infinity.
We define a map $\Phi\colon \overline{X}\to \overline{\exp(\OdX)}$ by
\begin{align*}
 \Phi(x):=\begin{cases}
           \overline{j}\circ \overline{\varphi}\circ \overline{\iota}(x)
           & \text{if } x\in X\\
           x &  \text{if } x\in \partial X
          \end{cases}.
\end{align*}
\begin{lemma}
\label{prop:deform-conti}
 The map $\Phi\colon \overline{X}\to \overline{\exp(\OdX)}$ 
 is continuous at infinity.
\end{lemma}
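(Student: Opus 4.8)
The plan is to deduce \cref{prop:deform-conti} directly from the continuity-at-infinity statements already established for the three constituent maps, together with the fact that a composition of maps that are continuous at infinity is again continuous at infinity, provided the intermediate maps behave correctly on boundary points. Concretely, I would first record the elementary but crucial observation that $\overline{\iota}$, $\overline{\varphi}$, and $\overline{j}$ all restrict to the identity on the relevant piece of the ideal boundary: $\overline{\iota}(x)=x$, $\overline{\varphi}(x)=x$, $\overline{j}(x)=x$ for $x\in\partial X$. This is immediate from the case definitions of these maps, and it is exactly what makes $\Phi$ well-defined and compatible with the decomposition into $X$ and $\partial X$.

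Next I would isolate the composition lemma: if $A\subset X$, $g\colon\overline{A}\to\overline{B}$ is continuous at infinity with $g(\partial X\cap\overline A)\subset\partial X$ and $g$ the identity there, and $h\colon\overline{B}\to\overline{C}$ is continuous at infinity with the analogous boundary behaviour, then $h\circ g$ is continuous at infinity. The proof is the usual one: given $x\in\partial X\cap\overline A$ and a sequence $x_n\to x$ in $\overline A$, continuity at infinity of $g$ gives $g(x_n)\to g(x)=x\in\partial X$; then continuity at infinity of $h$ at the point $x\in\partial X\cap\overline B$ gives $h(g(x_n))\to h(x)=x$. One only needs that $g(x)$ lands in the boundary so that ``continuity at infinity'' of $h$ applies at that point — and that is guaranteed by the identity-on-boundary property.

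Then I would simply apply this twice. We already know from \cref{lem:qiemb-conti-at-infinity} that $\overline{\iota}\colon\overline{X}\to\overline{X^{(0)}}$ and $\overline{j}\colon\overline{Y}\to\overline{\exp(\OdX)}$ are continuous at infinity, and from \cref{lem:varphi-conti} that $\overline{\varphi}\colon\overline{X^{(0)}}\to\overline{Y}$ is continuous at infinity; all three are the identity on $\partial X$. Chaining $\overline{\iota}$ with $\overline{\varphi}$, and then with $\overline{j}$, shows $\overline{j}\circ\overline{\varphi}\circ\overline{\iota}$ is continuous at infinity. Since $\Phi$ agrees with this composition on $X$ and with the identity on $\partial X$, and the composition already equals the identity on $\partial X\cap\overline X=\partial X$, we get $\Phi$ continuous at all $x\in\partial X$, i.e.\ continuous at infinity.

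I do not expect a genuine obstacle here; the one point requiring a line of care is that continuity at infinity is a statement about approach to boundary points, so in the composition argument one must verify that the intermediate image $g(x_n)$ really does converge to the boundary point $x$ (not merely stays close in some coarse sense) before invoking continuity of the next map — which is exactly why the identity-on-$\partial X$ hypothesis is used, and why I would state the composition lemma with that hypothesis explicitly rather than trying to prove something more general. A secondary bookkeeping point is to note that the relevant boundary pieces match up ($\overline{X^{(0)}}\setminus X^{(0)}=\partial X$, $\overline{\exp(\OdX)}\setminus\exp(\OdX)=\partial X$), which was already remarked in the text.
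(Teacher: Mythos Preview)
Your proposal is correct and matches the paper's approach: the paper states \cref{prop:deform-conti} without proof, treating it as an immediate consequence of the continuity at infinity of $\overline{\iota}$, $\overline{\varphi}$, and $\overline{j}$ just established. Your write-up simply makes explicit the composition argument the paper leaves to the reader, including the point that each of the three maps is the identity on $\partial X$, which is exactly what is needed.
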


\subsection{Angle map}
\label{sec:angle-map}
We define a map $\arg\colon \overline{\exp(\OdX)}\to \partial X$ 
as follows:
For $x\in \exp(\OdX_k)$, we choose $(p_x,t_x)\in \partial X_k\times \Rp$ such that
$x=\rpgm(e,p_x,t_x)$.
Then we define
\begin{align*}
 \arg(x) = p_x.
\end{align*}

\begin{lemma}
\label{lem:arg-conti}
 The map $\arg\colon \exp(\OdX_k)\to \partial X_k$ is 
 continuous at infinity.
\end{lemma}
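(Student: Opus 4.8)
The plan is to show that $\arg$ is continuous at every $x\in \partial X$, regarding $x$ as a point of $\overline{\exp(\OdX)}$ via $\arg(x)=x$. So fix $x\in\partial X$ and a sequence $(z_n)$ in $\overline{\exp(\OdX)}$ converging to $x$; as in the proofs of \cref{lem:varphi-conti,lem:qiemb-conti-at-infinity} we may assume $z_n\in\exp(\OdX)$ for all $n$, so $z_n=\rpgm(e,p_n,t_n)$ with $p_n:=\arg(z_n)\in\partial X$ and $t_n\in\Rp$. We must show $(p_n)\to x$ in $\partial X$, i.e.\ $(x\mid p_n)_e\to\infty$. The key point is that $z_n$ lies on the geodesic ray $\gamma_e^{p_n}$ emanating from $e$, so $z_n$ and $p_n$ ``point in the same direction'' from $e$ and hence have large Gromov product with each other; combined with $(x\mid z_n)_e\to\infty$ and the quasi-ultrametric inequality \ref{lem:qrayultmvisu} of \cref{lem:univ-const}, this forces $(x\mid p_n)_e\to\infty$.

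First I would estimate $(z_n\mid p_n)_e$ from below. Since $z_n=\gamma_e^{p_n}(t_n)$ lies on the geodesic ray $\gamma_e^{p_n}$, and by \ref{item:g-rpgm-lim-gprod} of \cref{lem:g-extended_bicombing} we have $(\rpgm(e,p_n,t)\mid p_n)_e\to\infty$ as $t\to\infty$, the obstacle is that $t_n$ need not go to infinity uniformly. However, $z_n\to x\in\partial X$ forces $d_X(e,z_n)\to\infty$, hence (as $\gamma_e^{p_n}$ is a geodesic from $e$) $t_n=d_X(e,z_n)\to\infty$. Now, for fixed $n$, the rays $t\mapsto\rpgm(e,z_n,t)$ and $t\mapsto\gamma_e^{p_n}(t)$ agree on $[0,t_n]$ (the former is the geodesic to $z_n$, which is an initial segment of the latter, using \ref{item:g-extension} and the geodesic property), so by \ref{lem:ray-same-param} of \cref{lem:univ-const} applied with $a=b=t_n$ we get $d_X(\rpgm(e,z_n,t),\gamma_e^{p_n}(t))\le\DA$ for all $t\le t_n$; taking $t_n\to\infty$ yields $(z_n\mid p_n)_e\ge t_n-\text{const}\to\infty$. (One has to be slightly careful: $\rpgm(e,z_n,-)$ is the extended bicombing toward the interior point $z_n$, which for $t\le d_X(e,z_n)$ is just $\rp\gamma(e,z_n,-)$ by \ref{item:g-extension}, and this is the initial segment of $\gamma_e^{p_n}$ because geodesics behave consistently; this is where the geodesic hypothesis is essential.)

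Having both $(x\mid z_n)_e\to\infty$ (from $z_n\to x$) and $(z_n\mid p_n)_e\to\infty$, I would apply \ref{lem:qrayultmvisu} of \cref{lem:univ-const} to the triple $x,z_n,p_n\in\overline{X}$ to conclude
\begin{align*}
 (x\mid p_n)_e \ge \DA^{-1}\min\{(x\mid z_n)_e,(z_n\mid p_n)_e\}\to\infty,
\end{align*}
which is exactly the statement that $(p_n)=(\arg(z_n))$ converges to $x$ in $\partial X$. Hence $\arg$ is continuous at $x$, and since $x\in\partial X$ was arbitrary, $\arg$ is continuous at infinity.

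The main obstacle, as flagged above, is the bookkeeping in the second paragraph: making precise that an initial segment of the geodesic ray $\gamma_e^{p_n}$ toward the ideal point $p_n$ coincides (up to the universal constant $\DA$, via \ref{lem:ray-same-param}) with the geodesic from $e$ to the interior point $z_n$, so that $(z_n\mid p_n)_e$ really does grow like $d_X(e,z_n)$. Everything else is a direct application of the quasi-ultrametric inequality and the fact that convergence to a boundary point forces the distance from $e$ to blow up.
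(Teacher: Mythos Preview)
The paper states this lemma without proof, so there is no reference argument to compare against; your outline is the natural one and fills the gap the authors left.

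Two places need tightening. The parenthetical claim that $\rp\gamma(e,z_n,-)$ is literally an initial segment of $\gamma_e^{p_n}$ is not justified, since a geodesic bicombing need not be consistent. Fortunately you do not actually use it: part~\ref{lem:ray-same-param} of \cref{lem:univ-const}, which you correctly invoke with $a=b=t_n$ (legitimate because $\rpgm(e,z_n,t_n)=z_n=\rpgm(e,p_n,t_n)$), already gives $d_X(\rp\gamma(e,z_n,t),\gamma_e^{p_n}(t))\le\Omega$ on $[0,t_n]$ without any consistency hypothesis. More substantively, this bound does not directly yield $(z_n\mid p_n)_e\ge t_n-\mathrm{const}$: the product in \cref{def:Gromov-prod} is defined with threshold $D_1$, which may be smaller than $\Omega$. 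The fix is a contraction step of the same kind used in the proofs of \cref{lem:qiemb-conti-at-infinity,lem:varphi-conti}. Since the two paths actually coincide at time $t_n$, approximate $\gamma_e^{p_n}$ by interior segments via \cref{lem:g-extended_bicombing}\ref{item:g-approximation} and apply the convexity in \cref{def:g-ccbicombing} to obtain $d_X(\rp\gamma(e,z_n,ct_n),\gamma_e^{p_n}(ct_n))\le C\le D_1$ for all $c\in[0,1]$; passing to a representing sequence for $p_n$ along $\gamma_e^{p_n}$ then gives $(z_n\mid p_n)_e\ge t_n/\mathrm{const}$ rather than $t_n-\mathrm{const}$. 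This still tends to infinity with $t_n=d_X(e,z_n)$, and your final application of~\ref{lem:qrayultmvisu} of \cref{lem:univ-const} goes through exactly as written.
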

We define a map $\Psi\colon \overline{X}\to \partial X$ 
\begin{align*}
 \Psi(x):=\begin{cases}
           \arg\circ \Phi(x)
           & \text{if } x\in X\\
           x &  \text{if } x\in \partial X
          \end{cases}
\end{align*}
By~\cref{prop:deform-conti,lem:arg-conti}, we have the following.
\begin{proposition}
 \label{prop:Psi-conti-at-infty}
 The map $\Psi\colon \overline{X}\to \partial X$ is 
 continuous at infinity.
\end{proposition}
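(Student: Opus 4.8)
The plan is to deduce \cref{prop:Psi-conti-at-infty} by chaining the two facts already established, \cref{prop:deform-conti} (the map $\Phi$ is continuous at infinity) and \cref{lem:arg-conti} (the map $\arg$ is continuous at infinity), once one notices that $\Phi$ fixes $\partial X$ pointwise and sends every point of $X$ into $\exp(\OdX)$, so that the codomain of $\Phi$ and the domain of $\arg$ match up with their boundaries aligned. Since the uniformity $\{V_n\}$ on $\overline{X}$ is metrizable (\cref{lem:Gromov-cptn}), it suffices to check sequential continuity of $\Psi$ at each $x \in \partial X = \overline{X}\setminus X$; on $\partial X$ itself $\Psi$ is the identity, which is trivially continuous, so the only content is continuity at infinity.

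First I would fix $x \in \partial X$ and a sequence $(x_n)$ in $\overline{X}$ with $x_n \to x$. Passing to the subsequence of terms lying in $\partial X$, where $\Psi$ is the identity and hence those terms map to points converging to $x = \Psi(x)$, I may assume without loss of generality that $x_n \in X$ for all $n$. For such $x_n$, unwinding the definitions gives $\overline{\iota}(x_n) = \iota(x_n) \in X^{(0)}$, then $\overline{\varphi}(\iota(x_n)) = \varphi(\iota(x_n)) \in Y$, and then $\overline{j}(\varphi(\iota(x_n))) = j(\varphi(\iota(x_n))) \in \exp(\OdX)$; hence $\Phi(x_n) \in \exp(\OdX)$ and $\Psi(x_n) = \arg(\Phi(x_n))$ with $\arg$ evaluated on its genuine domain $\exp(\OdX)$, not on the identity extension to $\partial X$.

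Next I would apply \cref{prop:deform-conti}: $\Phi$ is continuous at $x$ and $\Phi(x) = x$, so $\Phi(x_n) \to x$ in $\overline{\exp(\OdX)}$. The limit point $x$ belongs to $\overline{\exp(\OdX)} \setminus \exp(\OdX) = \partial X$, so \cref{lem:arg-conti} applies at $x$ and gives $\arg(\Phi(x_n)) \to \arg(x) = x$. Therefore $\Psi(x_n) = \arg(\Phi(x_n)) \to x = \Psi(x)$, which establishes continuity of $\Psi$ at every point of $\partial X$, i.e.\ continuity at infinity. This argument is essentially the formal statement ``a composition of maps continuous at infinity is continuous at infinity''; the only points that need care — and they are mild — are that $\Phi$ must carry the ideal boundary into the ideal boundary so the two continuity-at-infinity statements compose, and that a convergent sequence possibly having terms in $\partial X$ be split off at the start. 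I do not expect any genuine obstacle here, since the substance has already been absorbed into \cref{prop:deform-conti,lem:arg-conti}.
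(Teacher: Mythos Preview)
Your proposal is correct and is exactly the paper's approach: the paper simply cites \cref{prop:deform-conti,lem:arg-conti} without further detail, and your write-up just spells out the composition argument that this citation encodes.
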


\section{Trees of spaces}
\label{sec:trees-spaces}
\subsection{Trees of spaces}
\label{sec:tree-spaces}
\begin{definition}
\label{def:tree-of-spaces}
 Let $(Z,d_Z)$ be a geodesic metric space and $T$ be a bipartite tree.
 Set $V(T):= K \sqcup L$. 
 Let $\xi\colon Z\to T$ be a continuous map.
 We suppose the following:
 \begin{enumerate}[label=(\arabic*)]
  \item For each $l\in L$, the inverse image $\xi^{-1}(l)$ consists of a
        single point.
  \item For each $k\in K$, the inverse image $X_k:=\xi^{-1}(\starNb(k))$
        is a geodesic space.
 \end{enumerate}
 Then we say that $Z$ is a \emph{tree of spaces
 associated with the map $\xi\colon Z\to T$ and the bipartite
 structure $V(T)=K \sqcup L$}.
\end{definition}

\begin{definition}
\label{def:tree-of-gcc}
 Let $(Z,d_Z)$ be a tree of spaces
 associated with the map $\xi\colon Z\to T$ and the bipartite
 structure $V(T)=K \sqcup L$.
 
 Let $E'\geq 1$ and $C'\geq 0$ be constants.  We
 suppose that for each $k\in K$, the inverse image $X_k:=\xi^{-1}(\starNb (k))$
 admits an $(E',C')$-geodesic coarsely convex bicombing 
 \begin{align*}
  \gamma_k\colon X_k\times X_k\times [0,1]\to X_k.
 \end{align*} 
 Then we say that $Z$ is a 
 \emph{$(\xi,K,L,\{\gamma_k\}_{k\in K})$-tree of geodesic coarsely convex spaces}. 
\end{definition}

\begin{definition}
\label{def:tree-of-gcc-nolabel}
 We say that a metric space $Z$ is \emph{a tree of 
 geodesic coarsely convex spaces} if 
 $Z$ is $(\xi,K,L,\{\gamma_k\}_{k\in K})$-tree of 
 geodesic coarsely convex space for some $(\xi,K,L,\{\gamma_k\}_{k\in K})$.
\end{definition}


\begin{theorem}[\cite{gcc-pair-preparation}]
\label{thm:bicombing-on-Z}
 Let $(Z,d_Z)$ be a $(\xi,K,L,\{\gamma_k\}_{k\in K})$-tree 
 of geodesic coarsely convex space.
 For $k\in K$, set $X_k:=\xi^{-1}(\starNb (k))$.

 Then the space $Z$ admits an  
 $(E,C)$-geodesic coarsely convex bicombing 
 \begin{align*}
  \gamma\colon Z\times Z\times [0,1]\to Z
 \end{align*}
 where $E=E'+2$ and $C=6C'$. 
 The restriction of $\gamma$ to $X_k\times X_k\times [0,1]$ is 
 equal to $\gamma_k$.
\end{theorem}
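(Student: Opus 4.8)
The plan is to construct $\gamma$ explicitly from the local bicombings $\gamma_k$ by concatenating along the tree structure, and then to verify the two axioms of a geodesic coarsely convex bicombing. First I would set up the combinatorial backbone: for $z,z'\in Z$, the tree $T$ gives a unique geodesic $[\xi(z),\xi(z')]$ passing through a finite sequence of vertices $k_0,l_1,k_1,l_2,\dots,l_m,k_m$ alternating in $K$ and $L$; each cut point $l_i$ has a single preimage point $p_i\in Z$ (by condition (1) of \cref{def:tree-of-spaces}), and consecutive points $p_i,p_{i+1}$ (with $p_0:=z$, $p_{m+1}:=z'$) both lie in $X_{k_i}$. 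I would then define $\gamma(z,z',-)$ to be the concatenation of the geodesics $\gamma_{k_i}(p_i,p_{i+1},-)$, reparametrized on $[0,1]$ proportionally to the lengths $d_Z(p_i,p_{i+1})$, and check first that this is a geodesic bicombing: the key point is that $d_Z(z,z') = \sum_i d_Z(p_i,p_{i+1})$ because the cut points $p_i$ must be crossed, which is where properness of the tree structure and the fact that the $X_k$ are $\gamma_k$-convex in $Z$ enter. The restriction statement, that $\gamma|_{X_k\times X_k\times[0,1]}=\gamma_k$, is then immediate since if $z,z'\in X_k$ the tree geodesic stays in $\starNb(k)$ and no cut points intervene (or only trivial ones), so the concatenation degenerates to a single $\gamma_k$-geodesic.

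Next I would verify the coarse convexity inequality with $E=E'+2$, $C=6C'$. Given $x_1,x_2,y_1,y_2\in Z$ and $a,b,c\in[0,1]$, I would compare the broken geodesics defining $\gamma(x_1,y_1,-)$ and $\gamma(x_2,y_2,-)$ by comparing the underlying tree geodesics in $T$. The argument splits into cases according to how the two tree geodesics interact: when they share a long common "trunk," on that portion the points lie in common pieces $X_k$ and one applies the $(E',C')$-inequality for $\gamma_k$ directly; near the branch points, where one geodesic has left a piece the other is still in, one uses that the cut point $p_i$ realizing the branching is within bounded distance — controlled by $C'$ — of the relevant combed points, and the "$+2$" in $E$ absorbs the triangle-inequality detours through these cut points while the "$6C'$" absorbs the finitely many ($\le 6$, from the at-most-two-vertical-plus-horizontal shape of tree geodesics, loosely speaking) additive errors incurred at the transitions. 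This is the computational heart of the proof and I would organize it by first reducing to the case where $x_1=x_2$ (using the inequality's structure in the first variable) or by a direct but careful accounting of the at most three "regions" of disagreement between the two tree paths.

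The main obstacle I expect is precisely this case analysis for the convexity inequality: tracking how the reparametrization by lengths $d_Z(p_i,p_{i+1})$ interacts with the parameter $c$ when the two broken geodesics have different numbers of pieces and different breakpoints, and showing the errors telescope to give a single additive constant $6C'$ rather than something growing with $m$. A clean way to handle this is to observe that for the convexity estimate only the behavior near $c=0$ (i.e. near $x_1,x_2$) matters in a scale-invariant way, so only the first piece $X_{k_0}$ of each broken geodesic is genuinely relevant, together with a bounded-distance comparison of where the $c$-fraction lands; the deeper pieces contribute errors that are either governed by $\gamma_{k_i}$-convexity or by the bounded geometry of the branching, not by the depth. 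Since this theorem is attributed to \cite{gcc-pair-preparation}, I would present the argument as a sketch, stating the concatenation construction and the reduction, and citing \cite{gcc-pair-preparation} for the full verification of the constants.

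\begin{proof}[Sketch]
 See \cite{gcc-pair-preparation}. The bicombing $\gamma$ is obtained by concatenating the local bicombings $\gamma_k$ along the tree: for $z,z'\in Z$, the tree geodesic $[\xi(z),\xi(z')]$ determines finitely many cut points $p_1,\dots,p_m\in Z$ (the preimages of the traversed vertices in $L$), consecutive points lie in a common piece $X_{k_i}$, and one sets $\gamma(z,z',-)$ to be the length-reparametrized concatenation of $\gamma_{k_i}(p_i,p_{i+1},-)$. Since the $X_k$ are $\gamma_k$-convex in $Z$ and the cut points must be crossed, $d_Z(z,z')=\sum_i d_Z(p_i,p_{i+1})$, so $\gamma$ is a geodesic bicombing, and it restricts to $\gamma_k$ on $X_k\times X_k\times[0,1]$. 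For the convexity inequality one compares the broken geodesics of $(x_1,y_1)$ and $(x_2,y_2)$ by comparing their tree geodesics; on the common portion the $(E',C')$-inequalities for the relevant $\gamma_k$ apply directly, while near the bounded number of branch points the triangle inequality through the corresponding cut points, which lie within a $C'$-controlled distance of the combed points, produces the additive error, and accumulating these yields the constants $E=E'+2$ and $C=6C'$.
\end{proof}
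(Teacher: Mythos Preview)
The paper does not prove this theorem at all: it is stated as a result of \cite{gcc-pair-preparation} and used as a black box, with no proof or sketch given. Your proposal therefore cannot be compared against a proof in the paper; what can be said is that your sketch correctly identifies the natural construction (concatenation of the local bicombings $\gamma_k$ along the cut points coming from the $L$-vertices on the tree geodesic) and the expected source of the constants, and appropriately defers to \cite{gcc-pair-preparation} for the full verification, which is exactly what the paper does.
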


\begin{definition}
 Let $(Z,d_Z)$ be a $(\xi,K,L,\{\gamma_k\}_{k\in K})$-tree 
 of geodesic coarsely convex space.
 We call the bicombing $\gamma$ on $Z$ constructed in
 \cref{thm:bicombing-on-Z} \emph{the geodesic coarsely convex bicombing
 associated with the family $\{\gamma_k\}$}.
\end{definition}

\begin{remark}
\label{rem:dX_k-emb}
 Let $(Z,d_Z)$ be a $(\xi,K,L,\{\gamma_k\}_{k\in K})$-tree 
 of geodesic coarsely convex space, and let $\gamma$
 be the geodesic coarsely convex bicombing on $Z$
 associated with the family $\{\gamma_k\}$.
 For $k\in K$, set $X_k:=\xi^{-1}(\starNb (k))$.

 By the construction, it is clear that for each $k\in K$,
 the subspace $X_k$ is $\gamma$-convex in $Z$. 
 So by \cref{cor:sub-sp-closure}, the boundary of $X_k$ in 
 $\overline{Z}$ is homeomorphic to $\partial X_k$.
\end{remark}



\subsection{Free products of metric spaces}
In \cite{fukaya2023free}, we introduce free products of metric spaces, 
which can be regarded as trees of metric spaces. 
\begin{definition}
\label{def:net}
 Let $(X,d_X)$ be a metric space and let $X_0$ be a set with the discrete topology.
 Let $i_X: X_0 \to X$ be a proper map.
 We choose a \textit{base point} $e_X\in i_X(X_0)$.
 We call $(X_0, i_X,e_X)$ a \textit{net} of $X$.
 For  $x_0 \in X_0$, we denote by $\overline{x_0}$ the image $i_X(x_0)$.
 We say that $(X,d_X,X_0,i_X,e_X)$ is \emph{a metric space with a net}.
\end{definition}

\begin{remark}
 We often abbreviate specifying a net $(X_0,i_X,e_X)$ and say that 
 $(X,d_X)$ is a metric space with a net.
\end{remark}

\begin{example}
 \label{eg:net}
 Let $(X,d_X,e_X)$ be a metric space with a base point $e_X$.
 Let $G$ be a group acting on $X$ properly by isometries. 
 We define a map $\mathsf{orb}\colon G\to X$ by $\mathsf{orb}(g)\coloneqq g\cdot e_X$.
 Then $(G, o(e_X),e_X)$ is a net of $X$, called \textit{G-net}.
\end{example}

Let $(X,d_X)$ and $(Y,d_Y)$ be metric spaces with nets. In \cite[Section 1]{fukaya2023free},
we constructed the free product $X * Y$ of $X$ and $Y$.
In \cite{gcc-pair-preparation}, we constructed a bipartite tree $T$ and continuous map 
$\xi\colon X*Y\to T$ such that $X*Y$ is a tree of spaces associated with $\xi$.

\begin{example}
 \label{ex:freeprod-groups}
 Let $G$ and $H$ be groups acting on metric spaces $X$ and $Y$. We consider 
 a $G$-net of $X$, and an $H$-net of $Y$, respectively. Then the free product 
 $G * H$ acts on $X*Y$ properly. If the action of $G$ and $H$ are cocompact,
 then so is the action of $G*H$.
\end{example}

\section{Augmented space}
\label{sec:augmented-space}
\subsection{$n$-th augmented spaces}

\begin{definition}
\label{def:n-aug}
 Let $(Z,d_Z)$ be a tree of spaces
 associated with the map $\xi\colon Z\to T$ and the bipartite
 structure $V(T)=K \sqcup L$.

 For $k\in K$, set $X_k\colon=\xi^{-1}(\starNb (k))$. We choose an
 1-discrete, 2-dense subset  $X_k^{(0)}\subset X_k$.
 So the pair $(X_k, X_k^{(0)})$ is a space with a lattice. 
 We choose a bijection $k\colon \N\to K$.

 Let $n\in \N\sqcup \{0\}$.
 The \emph{$n$-th augmented space $\augn{Z}{n}$ of $Z$}
 is a tree of spaces
 associated with the map $\augn{\xi}{n}\colon \augn{Z}{n}\to T$ 
 satisfying the following:
 \begin{enumerate}[label=(\arabic*)]
  \item For $i\in \N$ with $i\leq n$, the space 
        $\augn{\xi}{n}^{-1}(\starNb (k(i)))$ is isometric to $X_{k(i)}$.
  \item For $i\in \N$ with $i> n$, 
        the space $\augn{\xi}{n}^{-1}(\starNb (k(i)))$ 
        is isometric to the metric horoball 
        $\Horo(X_{k(i)},X_{k(i)}^{(0)})$ of the space with a lattice $(X_{k(i)},X_{k(i)}^{(0)})$.
 \end{enumerate}
 We call $0$-th augmented space $\augn{Z}{0}$
 \emph{the augmented space}, and we denote by $\aug{Z}$.
\end{definition}

\begin{remark}
 In the above definition, the coarse equivalence class of 
 the $n$-th augmented space does not depend on the choice of 
 nets $X_k^{(0)}$. Therefore, when we consider $n$-th augmented spaces,
 we implicitly choose nets, and we say that $\augn{Z}{n}$ is the
 $n$-th augmented space of $Z$, without mentioning nets.
 We abbreviate $\Horo(X_k,X_k^{(0)})$ by $\Horo(X_k)$

 The augmented space $\aug{Z}$ does not depend on 
 the choice of the bijection $k\colon \N\to K$.
\end{remark}

We can explicitly construct a $n$-th augmented space $\augn{Z}{n}$
from a tree of spaces $Z$
by choosing nets and gluing combinatorial horoballs along nets
by the way described in \cref{def:metr-horob}.

\subsection{Bicombings on $n$-th augmented spaces}
Let $(Z,d_Z)$ be a $(\xi,K,L,\{\gamma_k\}_{k\in K})$-tree 
of geodesic coarsely convex spaces.
Let $\gamma$ be the geodesic coarsely convex bicombing
associated with the family $\{\gamma_k\}$.
We choose a pair of constants $(E',C')$ such that
$\gamma$ is $(E',C')$ geodesic coarsely convex.

For $k\in K$, set $X_k\colon=\xi^{-1}(\starNb (k))$. 
For $k\in K$, we define an  
$(E',C'+8\delta)$-geodesic coarsely convex bicombing $\augn{\gamma_{k}}{n}$
on $\augn{\xi}{n}^{-1}(\starNb (k))=\augn{X_{k}}{n}$ 
by the following:
\begin{enumerate}
 \item if $i\leq n$ then $\augn{\gamma_{k(i)}}{n} = \gamma_{k(i)}$,
 \item Suppose $i > n$.
 Note that $\augn{X_{k(i)}}{n}$ is equal to $\Horo(X_{k(i)})$. 
       For $x,y \in \augn{X_{k(i)}}{n}$, we define a map
 \begin{align*}
\augn{\gamma_{k(i)}}{n}(x,y,-)\colon [0,1]\to \augn{X_{k(i)}}{n}
 \end{align*}
       to be the normal geodesic segment from $x$ to $y$ 
       given in \cref{prop:geod}.
\end{enumerate}

We choose a bijection $k\colon \N\to K$.
Let $\augn{Z}{n}$ be the $n$-th augmented space of $Z$ .
Then $\augn{Z}{n}$ is a $(\augn{\xi}{n},K,L,\{\augn{\gamma_k}{n}\}_{k\in K})$-tree 
of geodesic coarsely convex space.
Let $\augn{\gamma}{n}$ be an 
geodesic coarsely convex bicombing on $\augn{Z}{n}$ 
associated with $\{\augn{\gamma_{k}}{n}\}$.

\begin{definition}
\label{def:bicombing_on_augn}
 We call the bicombing $\augn{Z}{n}$ constructed in the above
 an \emph{augmented bicombing associated with the family 
 $\{\gamma_{k}\}_{k\in K}$}.
\end{definition}

\begin{remark}
 For $i\leq n$, $X_{k(i)}$ is an $\augn{\gamma}{n}$-convex subspace, 
 so by \cref{cor:sub-sp-closure}, $\partial X_{k(i)}$ 
 is a closed subspace of $\partial\augn{Z}{n}$.

 For $i>n$, $\Horo(X_{k(i)})$ is an $\augn{\gamma}{n}$-convex subspace, the boundary 
 $\partial \Horo(X_{k(i)}) = \overline{\Horo(X_{{k(i)}})} \setminus \Horo(X_{k(i)})$ 
 consists of a single point. We call this the 
 \emph{center of the horoball} $\Horo(X_{k(i)})$, 
 and denote by $c_{k(i)}$.
\end{remark}

\section{(co)Homology if Ideal boundary of a tree of spaces}
\label{sec:cohomology-ideal-boundary-tree}
Let $(Z,d_Z)$ be a $(\xi,K,L,\{\gamma_k\}_{k\in K})$-tree 
of geodesic coarsely convex spaces.
We fix a bijection $k\colon \N\to K$.
Let $\aug{Z}$ be the augmented space of $Z$, and
let $\augn{\gamma}{n}$ be the augmented bicombing on $\augn{Z}{n}$
associated with the family $\{\gamma_{k}\}_{k\in K}$.

\begin{remark}
 It is clear that the inclusion 
 $\iota_{n}\colon \augn{Z}{n}\hookrightarrow \augn{Z}{n-1}$ is a visual map 
 in the sense of \cref{def:visual-map}.
 By \cref{prop:visual-df-conti}, it induces a continuous map 
 \begin{align*}
 \partial \iota_{n}\colon \partial \augn{Z}{n} \to  \partial\augn{Z}{n-1}.
 \end{align*}
 The family $\{\partial \augn{Z}{n}, \partial \iota_{n}\}$ forms a projective system,
 and the projective limit is $\partial Z$, that is,
 \begin{align*}
  \partial Z = \varprojlim_{n} \partial \augn{Z}{n}.
 \end{align*}
\end{remark}

The following is a key proposition for computing (co)homologies 
of $\partial Z$.
\begin{proposition}
\label{prop:retractionAn}
 For $n\in \N$, there exists a continuous retraction 
 \begin{align*}
  s_{k(n)}\colon \partial \augn{Z}{n}\to \partial X_{k(n)}.
 \end{align*}
\end{proposition}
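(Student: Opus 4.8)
The plan is to construct $s_{k(n)}$ by first projecting $\partial\augn{Z}{n}$ onto the boundary of the $\augn{\gamma}{n}$-convex subspace $X_{k(n)}$ and then using the retraction machinery of \cref{sec:cont-at-infin}. Concretely, fix a base point $e\in X_{k(n)}\subset \augn{Z}{n}$. For a point $\zeta\in\partial\augn{Z}{n}$, take the extended geodesic ray $\gamma_e^\zeta\colon\Rp\to \augn{Z}{n}$ provided by \cref{lem:g-extended_bicombing}. Because $X_{k(n)}$ is $\augn{\gamma}{n}$-convex and the tree $T$ is a tree, this ray either stays in $X_{k(n)}$ for all time — in which case $\zeta\in\partial X_{k(n)}$ and we set $s_{k(n)}(\zeta)=\zeta$ — or it leaves $X_{k(n)}$ permanently after some finite time through a unique ``gate'' vertex of $T$, and in that case the nearest-point-type projection of the ray onto $X_{k(n)}$ is eventually constant, equal to a point $g_\zeta\in X_{k(n)}$ depending coarsely on $\zeta$. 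This gives a (not necessarily continuous) map $\augn{Z}{n}\to X_{k(n)}$ which is the identity on $X_{k(n)}$ up to bounded error, together with a boundary map $\overline{\augn{Z}{n}}\to\overline{X_{k(n)}}$ that fixes $\partial X_{k(n)}$ pointwise.

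The next step is to feed this projection into \cref{lem:qiemb-conti-at-infinity}: the map $f\colon\overline{X_{k(n)}}\to\overline{\augn{Z}{n}}$ that we really want is a section (inclusion) composed with nothing, so more precisely I would apply \cref{lem:qiemb-conti-at-infinity} to the projection $p\colon \overline{A}\to\overline{\augn{Z}{n}}$ where $A$ is a coarsely dense copy of $X_{k(n)}$ inside $\augn{Z}{n}$, deducing that the induced boundary map is continuous at infinity, hence continuous on $\partial\augn{Z}{n}$, since $\partial\augn{Z}{n}$ is entirely ``at infinity'' relative to $A$. To promote the codomain from $\partial\augn{Z}{n}$ to $\partial X_{k(n)}$, I compose with the map $\Psi$ of \cref{prop:Psi-conti-at-infty} (or rather its analogue for $X_{k(n)}$): the angle/exponential retraction $\overline{\exp(\OdX)}\to\partial X$ shows that once a boundary sequence has been projected to lie near $X_{k(n)}$, its limit already lies in $\partial X_{k(n)}$, and the retraction onto $\partial X_{k(n)}$ is continuous at infinity. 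Composing continuous-at-infinity maps between spaces whose non-boundary parts are negligible yields a genuinely continuous map $s_{k(n)}\colon\partial\augn{Z}{n}\to\partial X_{k(n)}$, and by construction it restricts to the identity on the closed subspace $\partial X_{k(n)}\subset\partial\augn{Z}{n}$ (recorded in \cref{rem:dX_k-emb} / the remark after \cref{def:bicombing_on_augn}), so it is a retraction.

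The main obstacle will be showing that the projection of an extended geodesic ray onto $X_{k(n)}$ is well-behaved enough — i.e. that it is bounded-distance from the identity on $X_{k(n)}$ and that rays escaping $X_{k(n)}$ project to a bounded set — so that the hypotheses of \cref{lem:qiemb-conti-at-infinity} are literally satisfied. This requires the tree structure: the map $\augn{\xi}{n}\colon\augn{Z}{n}\to T$ forces any geodesic leaving $\starNb(k(n))$ to cross the separating leaf vertex, and convexity of $X_{k(n)}$ plus the coarse convexity inequality of \cref{lem:g-rpgm_cconvex} control how far the ray can drift; the delicate point is that when the relevant component $\augn{X_{k(i)}}{n}$ for $i>n$ is a horoball, rays through its center behave like normal geodesic segments (\cref{prop:geod}), so one must check the projection of such a ray onto the gate of $X_{k(n)}$ lands in a bounded neighbourhood of the attaching net point. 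Once those coarse estimates are in place, the continuity is a formal consequence of the results of \cref{sec:cont-at-infin}, and the fact that $s_{k(n)}$ is a retraction is immediate from $\partial X_{k(n)}$ sitting inside $\partial\augn{Z}{n}$ as the boundary of a $\augn{\gamma}{n}$-convex subspace.
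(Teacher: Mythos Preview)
Your overall architecture matches the paper's: define a ``gate projection'' $\pi_{k(n)}\colon \partial\augn{Z}{n}\to\overline{X_{k(n)}}$ by following the extended geodesic $\gamma_e^\zeta$ until the last time it lies in $X_{k(n)}$, and then compose with the map $\Psi_{k(n)}\colon\overline{X_{k(n)}}\to\partial X_{k(n)}$ of \cref{prop:Psi-conti-at-infty}. That is exactly how the paper builds $s_{k(n)}$.

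The gap is in your continuity argument for the projection step. You try to invoke \cref{lem:qiemb-conti-at-infinity}, but that lemma requires $\sup_{x\in A} d_X(x,f(x))<\infty$, and no version of the gate projection satisfies this: a point of $\augn{Z}{n}$ lying deep inside some other piece $X_{k(j)}$ is moved an arbitrarily large distance when projected to $X_{k(n)}$. Restricting to $A=X_{k(n)}$ makes the hypothesis trivially true, but then ``continuous at infinity'' only means continuous at points of $\partial X_{k(n)}=\overline{A}\setminus A$, which is strictly smaller than the domain $\partial\augn{Z}{n}$ you need. Your third paragraph seems to anticipate trouble here, but the remedy you suggest (checking that rays project to a bounded set near the gate) is orthogonal to the actual hypothesis of \cref{lem:qiemb-conti-at-infinity}.

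The paper does not use \cref{lem:qiemb-conti-at-infinity} for $\pi_k$ at all. Instead it proves continuity of $\pi_k$ directly from Gromov-product estimates: for $v\neq w$ in $X_k$ one shows $(x\mid y)_e\le \Omega(\max\{d_Z(e,v),d_Z(e,w)\}+\Omega)$ whenever $\pi_k(x)=v$, $\pi_k(y)=w$, so each fibre $\pi_k^{-1}(v)$ is open and $\pi_k$ is locally constant off $\partial X_k$; a companion estimate handles continuity at points of $\partial X_k$. Only after $\pi_k$ is known to be continuous does one compose with $\Psi_k$ and use continuity at infinity. So your plan is salvageable, but the middle step needs a direct argument with the Gromov product rather than an appeal to \cref{lem:qiemb-conti-at-infinity}.
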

We will prove \cref{prop:retractionAn} in \cref{sec:constr-retarct}

\subsection{Cohomology}
\label{sec:cohomology}
Let $M^*=(M^n)_{n\in \N}$ be the $K$-theory or the 
Alexander-Spanier cohomology, and let $\tilde{M}^*$ be the reduced one.
For a compact Hausdorff space $Y$, and $e\in Y$, we have
$\tilde{M}^*(Y) \cong M^*(Y,\{e\})$.


We consider the long exact sequence 
for the triple
$(\partial \augn{Z}{n}, \partial X_{k(n)},\{e_n\})$.
\begin{align*}
  \dots &\to M^p(\partial \augn{Z}{n},\partial X_{k(n)}) 
 \to M^p(\partial \augn{Z}{n},\{e_n\})
       \to M^p(\partial X_{k(n)},\{e_n\}) \\
 &\to M^{p+1}(\partial \augn{Z}{n},\partial X_{k(n)}) \to \cdots
\end{align*}

By \cref{prop:retractionAn}, the above exact sequence splits, so we have
\begin{align}
\label{eq:cohoml-splits}
  0 \to M^p(\partial \augn{Z}{n},\partial X_{k(n)}) 
 \to \tilde{M}^p(\partial \augn{Z}{n})
       \to \tilde{M}^p(\partial X_{k(n)}) \to 0
\end{align}
By the strong excision axiom, we have  
\begin{align*}
 M^p(\partial \augn{Z}{n},\partial X_{k(n)})&\cong 
 M^p((\partial \augn{Z}{n}\setminus \partial X_{k(n)})^{+},\{c_{k(n)}\})\\
 &\cong
 M^p(\partial \augn{Z}{n-1},\{c_{k(n)}\}) \\
 &= \tilde{M}^p(\partial \augn{Z}{n-1}).
\end{align*}
Here $(\partial \augn{Z}{n}\setminus \partial X_{k(n)})^{+}$
 denotes the one-point compactification 
\begin{align*}
 (\partial \augn{Z}{n}\setminus \partial X_{k(n)})\sqcup \{c_{k(n)}\}.
\end{align*}

It follows that 
$\tilde{M}^p(\partial \augn{Z}{n}) \cong \tilde{M}^p(\partial \augn{Z}{n-1})  \oplus \tilde{M}^p(\partial X_{k(n)})$.
By the continuity of $M^*$, we have
$\tilde{M}^p(\partial Z) \cong \varinjlim \tilde{M}^p(\partial \augn{Z}{n})$.
Here we summarize the computation.
\begin{theorem}
\label{thm:cohomology}
Let $Z$ be a 
$(\xi,K,L,\{\gamma_k\}_{k\in K})$-tree of geodesic coarsely convex spaces.
Suppose that $Z$ is proper.
Set $X_k:=\xi^{-1}(\starNb (k))$. Let $\aug{Z}$ be the augmented space of $Z$. Then,
 \begin{align*}
 \tilde{M}^p(\partial Z) 
 \cong \tilde{M}^p(\partial \aug{Z}) \oplus 
  \bigoplus_{k\in K} \tilde{M}^p(\partial X_k)
 \end{align*}
\end{theorem}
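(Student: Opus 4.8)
The plan is to iterate the splitting isomorphism established by \cref{prop:retractionAn} over the tower $\augn{Z}{n}$ and then pass to the colimit. First I would record the following consequence of \cref{prop:retractionAn}: the continuous retraction $s_{k(n)}\colon \partial\augn{Z}{n}\to \partial X_{k(n)}$ splits the restriction map in the long exact sequence of the triple $(\partial\augn{Z}{n},\partial X_{k(n)},\{e_n\})$, where $e_n$ is a conveniently chosen basepoint. Since $\partial X_{k(n)}$ is a closed subspace of $\partial\augn{Z}{n}$ (see the remark following \cref{def:bicombing_on_augn}) and $Z$ proper forces $\augn{Z}{n}$ proper, all the spaces involved are compact Hausdorff, so the long exact sequence and the splitting yield the short exact sequence \eqref{eq:cohoml-splits}, which is split by $s_{k(n)}^*$. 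Hence
\begin{align*}
 \tilde{M}^p(\partial\augn{Z}{n}) \cong M^p(\partial\augn{Z}{n},\partial X_{k(n)}) \oplus \tilde{M}^p(\partial X_{k(n)}).
\end{align*}

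The second step is to identify the relative term via excision. The key topological input is that collapsing $\partial X_{k(n)}$ inside $\partial\augn{Z}{n}$ produces (up to homeomorphism) the space $\partial\augn{Z}{n-1}$ with the collapsed point corresponding to the center $c_{k(n)}$ of the horoball $\Horo(X_{k(n)})$; equivalently, removing $\partial X_{k(n)}$ from $\partial\augn{Z}{n}$ and one-point-compactifying gives $\partial\augn{Z}{n-1}$. This is because $\augn{Z}{n}$ and $\augn{Z}{n-1}$ differ only in that the $k(n)$-piece is $X_{k(n)}$ in the former and $\Horo(X_{k(n)})$ in the latter, and $\partial\Horo(X_{k(n)})$ is the single point $c_{k(n)}$ by \cref{prop:Horoball-hyp} and \cref{cor:sub-sp-closure}. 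Applying the strong excision axiom for $M^*$ (valid for Alexander--Spanier cohomology and for $K$-theory on compact Hausdorff pairs) then gives $M^p(\partial\augn{Z}{n},\partial X_{k(n)}) \cong \tilde{M}^p(\partial\augn{Z}{n-1})$, exactly as displayed in the excerpt. Combining, $\tilde{M}^p(\partial\augn{Z}{n}) \cong \tilde{M}^p(\partial\augn{Z}{n-1}) \oplus \tilde{M}^p(\partial X_{k(n)})$.

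The third step is the induction and the colimit. Unravelling the recursion with base case $\augn{Z}{0}=\aug{Z}$ gives $\tilde{M}^p(\partial\augn{Z}{n}) \cong \tilde{M}^p(\partial\aug{Z}) \oplus \bigoplus_{i=1}^{n}\tilde{M}^p(\partial X_{k(i)})$, provided one checks that the splitting maps are compatible with the connecting maps $\partial\iota_n$ of the tower, so that the isomorphisms assemble into a direct system. Since $\partial Z = \varprojlim_n \partial\augn{Z}{n}$ (the remark before \cref{prop:retractionAn}) and $M^*$ is continuous --- meaning it sends inverse limits of compact Hausdorff spaces to direct limits of groups --- we get $\tilde{M}^p(\partial Z) \cong \varinjlim_n \tilde{M}^p(\partial\augn{Z}{n}) \cong \tilde{M}^p(\partial\aug{Z}) \oplus \bigoplus_{k\in K}\tilde{M}^p(\partial X_k)$, using that the bijection $k\colon\N\to K$ lets us rewrite the countable direct sum indexed over stages as a direct sum over $K$.

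The main obstacle I expect is the compatibility in the colimit step: one must verify that the restriction maps $\tilde{M}^p(\partial\augn{Z}{n})\to\tilde{M}^p(\partial\augn{Z}{n-1})$ induced by $\partial\iota_n$ act as the projection onto the first summand in the decomposition above, so that the transition maps in the direct system are split injections and the colimit really is the full direct sum with no cancellation. This amounts to checking that, at the level of the tower, the inclusion $\augn{Z}{n}\hookrightarrow\augn{Z}{n-1}$ restricted to $X_{k(n)}$ lands in the horoball $\Horo(X_{k(n)})$ and hence is sent to $c_{k(n)}$ on the boundary, which is naturality of the triple exact sequence once the excision identification is pinned down; the fact that $M^*$ is a ``continuous'' theory on compact Hausdorff spaces (true for Alexander--Spanier cohomology by tautness, and for $K$-theory of the $C^*$-algebras $C(\partial\augn{Z}{n})$ by continuity of $K$-theory) then finishes it. The remaining verifications --- that all boundary spaces are compact Hausdorff, that the relevant pairs are cofibrations or at least admit the excision axiom, and that the basepoints can be chosen coherently --- are routine given \cref{lem:Gromov-cptn} and the properness hypothesis.
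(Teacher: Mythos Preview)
Your proposal is correct and follows essentially the same route as the paper: split the long exact sequence of the triple $(\partial\augn{Z}{n},\partial X_{k(n)},\{e_n\})$ using the retraction of \cref{prop:retractionAn}, identify the relative term with $\tilde M^p(\partial\augn{Z}{n-1})$ via strong excision and the one-point compactification, iterate, and pass to the colimit by continuity of $M^*$. One small slip: since $M^*$ is contravariant, the transition maps in the direct system go $\tilde M^p(\partial\augn{Z}{n-1})\to\tilde M^p(\partial\augn{Z}{n})$ and should be checked to be the \emph{inclusion} of the first summand (not a projection $\tilde M^p(\partial\augn{Z}{n})\to\tilde M^p(\partial\augn{Z}{n-1})$); with that correction your compatibility discussion is exactly what is needed, and in fact you are more explicit about it than the paper itself.
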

Here we remark that the topological type of $\partial \aug{Z}$ is described in
\cref{thm:Cantor}.

\subsection{Homology}
\label{sec:homology} 
Let $M_*=(M_n)_{n\in \N}$ be the $K$-homology or the Steenrod homology,
and let $\tilde{M}_*$ be the reduced one.
For a compact Hausdorff space $X$, and $e\in X$, we have
$\tilde{M}_*(X) \cong M_*(X,\{e\})$.


\begin{theorem}[{\cite[Theorem 4.]{MR1388297}}]
\label{thm:Milnor-exact}
 Let $Y_1\leftarrow Y_2\leftarrow Y_3\leftarrow \cdots$ be an inverse system of compact metric spaces.
 Then there exists an exact sequence
\begin{align*}
 0\to \limone M_{p+1}(Y_i) \to M_p(\varprojlim Y_i) \to \varprojlim M_p(Y_i) \to  0.
\end{align*}
\end{theorem}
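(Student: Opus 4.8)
The plan is to deduce the statement from Milnor's axiomatic treatment of Steenrod-type homology theories. First I would isolate the only formal properties of $M_*$ that the argument uses: the Eilenberg--Steenrod axioms on the category of compact metric pairs, strong excision, and the \emph{cluster axiom} --- for a countable family of pointed compact metric spaces $(X_i,x_i)$ with diameters tending to $0$, wedged at a single point, one has $\tilde M_p\bigl(\bigvee_i X_i\bigr)\cong\prod_i\tilde M_p(X_i)$. For Steenrod homology this is part of the definition, and for $K$-homology extended to compact metric spaces it is known; I would cite the relevant reference rather than reprove it here.

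Next I would choose compatible basepoints --- a basepoint of $Y_1$ pulled back through the bonding maps $p_i\colon Y_{i+1}\to Y_i$, so that every $p_i$ is pointed --- and form the shrinking wedge $W:=\bigvee_{i\ge 1}(Y_i)_+$, a pointed compact metric space. Let $b\colon W\to W$ be the pointed self-map carrying the $i$-th summand $(Y_i)_+$ into the $(i-1)$-st summand by $(p_{i-1})_+$ for $i\ge 2$, and the first summand to the basepoint. Let $Q$ be the mapping torus of $b$ (equivalently the homotopy coequalizer of $\mathrm{id}_W$ and $b$); it is again a compact metric space. The Wang-type long exact sequence of the mapping torus,
\[
\cdots\to\tilde M_p(W)\xrightarrow{\ \mathrm{id}-b_*\ }\tilde M_p(W)\to\tilde M_p(Q)\to\tilde M_{p-1}(W)\xrightarrow{\ \mathrm{id}-b_*\ }\cdots,
\]
combined with the cluster axiom $\tilde M_p(W)\cong\prod_i M_p(Y_i)$, turns $\mathrm{id}-b_*$ into the map $(\alpha_i)_i\mapsto\bigl(\alpha_i-(p_i)_*\alpha_{i+1}\bigr)_i$, whose kernel is $\varprojlim M_p(Y_i)$ and whose cokernel is $\limone M_p(Y_i)$ by the standard description of the derived limit. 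Hence the long exact sequence collapses to short exact sequences
\[
0\to\limone M_p(Y_i)\to\tilde M_p(Q)\to\varprojlim M_{p-1}(Y_i)\to 0 .
\]

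It then remains to identify $\tilde M_p(Q)$ with $M_{p-1}\bigl(\varprojlim_i Y_i\bigr)$, that is, to show that $Q$ is homotopy equivalent to the unreduced suspension of $Y:=\varprojlim_i Y_i$. This homotopy-theoretic identification is the geometric heart of the proof and the step I expect to be the main obstacle: $Q$ is an infinite ``double telescope'' assembled from all the $Y_i$ at once, $Y$ is a wild compact metric space in general (a solenoid already shows the $\limone$-term need not vanish), and neither space is a CW complex, so cellular approximation is unavailable. Milnor carries this out by an explicit homotopy-theoretic argument that uses the shrinking-wedge structure of $W$ essentially. Alternatively one can pass to Spanier--Whitehead (Steenrod--Sitnikov) duality: the duals $DY_i$ form a direct system with $D\bigl(\varprojlim_i Y_i\bigr)\simeq\operatorname{hocolim}_i DY_i$, under which $M_p(Y)$ becomes a generalised cohomology group of a homotopy colimit and the desired sequence becomes the classical Milnor $\limone$-sequence for the cohomology of a homotopy colimit. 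Once the identification is in place, reindexing by $q=p-1$ yields $0\to\limone M_{q+1}(Y_i)\to M_q(Y)\to\varprojlim M_q(Y_i)\to 0$, which is the assertion; naturality in the inverse system is then routine, as all the constructions used are functorial.
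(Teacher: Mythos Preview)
The paper does not prove this theorem; it is quoted from Milnor's paper on Steenrod homology (the citation in the theorem heading) and used as a black box in \cref{sec:homology}. There is no proof in the paper to compare your attempt against.

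That said, your sketch is essentially the strategy of the cited reference: encode the inverse system in a shrinking wedge, invoke the cluster axiom to identify its reduced homology with the product $\prod_i M_p(Y_i)$, and extract the $\limone$-sequence from a long exact sequence attached to an auxiliary space built from the wedge and the bonding maps. The step you correctly flag as the main obstacle --- identifying $\tilde M_p(Q)$ with $M_{p-1}(\varprojlim_i Y_i)$ --- is indeed the delicate point, and as stated it is not quite right: the mapping torus of your shift $b$ is not in general homotopy equivalent to the suspension of $\varprojlim_i Y_i$ in the category of compact metric spaces (these spaces are not CW, so neither Whitehead's theorem nor cellular approximation is available). Milnor's actual argument builds a somewhat different auxiliary space and establishes the identification at the level of homology, using strong excision and the cluster axiom rather than a homotopy equivalence of spaces. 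Your Spanier--Whitehead duality alternative is a legitimate route for theories that admit such a duality on compact metric spaces, but it imports machinery at least as heavy as what it replaces. For the purposes of this paper none of this matters: the theorem is invoked, not proved, and your write-up already supplies more than the paper does.
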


By the same argument as the one in \cref{sec:cohomology}, we obtain
\begin{align*}
 \tilde{M}_p(\partial \augn{Z}{n}) \cong \tilde{M}_p(\partial \augn{Z}{n-1})  \oplus \tilde{M}_p(\partial X_{k(n)}).
\end{align*}

By \cref{thm:Milnor-exact}, we have
\begin{align*}
 0\to \limone M_{p+1}(\partial \augn{Z}{n}) 
 \to M_p(\partial Z) \to \varprojlim M_p(\partial \augn{Z}{n}) \to  0.
\end{align*}

By \cref{prop:retractionAn}, the maps 
$M_{p+1}(\partial \augn{Z}{n+1})\to M_{p+1}(\partial \augn{Z}{n})$ are surjective,
so the Mittag-Leffler condition holds.
Thus $\limone M_{p+1}(\partial \augn{Z}{n})$ vanishes. 
\begin{theorem}
\label{thm:homology}
 Let $Z$ be a $(\xi,K,L,\{\gamma_k\}_{k\in K})$-tree of 
 geodesic coarsely convex spaces. Set $X_k:=\xi^{-1}(\starNb (k))$.
 Suppose that $Z$ is proper.
 Let $\aug{Z}$ be the augmented space of $Z$. Then,
 \begin{align*}
 \tilde{M}_p(\partial Z) 
 \cong \tilde{M}_p(\partial \aug{Z}) \times
  \prod_{k\in K} \tilde{M}_p(\partial X_k).
 \end{align*}
\end{theorem}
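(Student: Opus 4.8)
The plan is to run the homology side of the argument exactly parallel to the cohomology computation in \cref{sec:cohomology}, replacing direct sums by direct products and using the Milnor sequence (\cref{thm:Milnor-exact}) in place of continuity of cohomology. First I would fix the bijection $k\colon\N\to K$ and consider, for each $n\in\N$, the triple $(\partial\augn{Z}{n},\partial X_{k(n)},\{e_n\})$ and its long exact sequence in $M_*$. By \cref{prop:retractionAn} there is a continuous retraction $s_{k(n)}\colon\partial\augn{Z}{n}\to\partial X_{k(n)}$, which splits the sequence and yields a short exact sequence
\begin{align*}
 0\to \tilde{M}_p(\partial X_{k(n)}) \to \tilde{M}_p(\partial\augn{Z}{n}) \to M_p(\partial\augn{Z}{n},\partial X_{k(n)}) \to 0.
\end{align*}
By the strong excision axiom (valid for Steenrod homology and $K$-homology on compact metric pairs, noting $\partial X_{k(n)}$ is closed in $\partial\augn{Z}{n}$ by \cref{rem:dX_k-emb} and the surrounding remark) we identify $M_p(\partial\augn{Z}{n},\partial X_{k(n)})\cong \tilde M_p(\partial\augn{Z}{n}\setminus\partial X_{k(n)})^+)\cong \tilde M_p(\partial\augn{Z}{n-1})$, using that collapsing the horoball direction turns $\augn{Z}{n}$ into $\augn{Z}{n-1}$ with the center $c_{k(n)}$ as basepoint. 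Hence
\begin{align*}
 \tilde{M}_p(\partial\augn{Z}{n}) \cong \tilde{M}_p(\partial\augn{Z}{n-1}) \oplus \tilde{M}_p(\partial X_{k(n)}),
\end{align*}
and by induction $\tilde M_p(\partial\augn{Z}{n})\cong \tilde M_p(\partial\aug{Z})\oplus\bigoplus_{i=1}^{n}\tilde M_p(\partial X_{k(i)})$.

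Next I would pass to the limit. Since $\partial Z=\varprojlim_n\partial\augn{Z}{n}$ is an inverse limit of compact metric spaces, \cref{thm:Milnor-exact} gives
\begin{align*}
 0\to \limone M_{p+1}(\partial\augn{Z}{n}) \to M_p(\partial Z) \to \varprojlim_n M_p(\partial\augn{Z}{n}) \to 0.
\end{align*}
The connecting maps $\tilde M_{p+1}(\partial\augn{Z}{n+1})\to\tilde M_{p+1}(\partial\augn{Z}{n})$ are, under the identification above, the canonical projections of the direct sum decomposition — in particular surjective — so the inverse system satisfies the Mittag-Leffler condition and $\limone M_{p+1}(\partial\augn{Z}{n})=0$. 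Therefore $\tilde M_p(\partial Z)\cong\varprojlim_n\tilde M_p(\partial\augn{Z}{n})$. Finally, the inverse limit of the tower $\cdots\to \tilde M_p(\partial\aug{Z})\oplus\bigoplus_{i=1}^{n}\tilde M_p(\partial X_{k(i)})\to\cdots$ with bonding maps the projections off the top summand is exactly $\tilde M_p(\partial\aug{Z})\times\prod_{k\in K}\tilde M_p(\partial X_k)$, which is the claimed formula.

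The step I expect to be the main obstacle is making sure the bonding maps in the tower are literally the coordinate projections, so that the $\limone$ term vanishes and the inverse limit computes the product. This requires checking compatibility between the splitting induced by $s_{k(n)}$ and the excision identification at successive levels $n$ and $n-1$ — i.e.\ that $\partial\iota_n$ restricted appropriately kills the $\tilde M_p(\partial X_{k(n)})$ summand and is the identity on $\tilde M_p(\partial\augn{Z}{n-1})$. This is a diagram-chase using naturality of the long exact sequence of a triple, naturality of excision, and the fact that $\partial\iota_n$ collapses the augmented $X_{k(n)}$ onto the horoball center $c_{k(n)}$; once that commuting diagram is in hand, everything else is formal. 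One should also verify that excision and the Milnor sequence apply to both $K$-homology and Steenrod homology in the compact metric setting, but these are standard (Steenrod homology is designed for exactly this, and $K$-homology of compact metric spaces agrees with it up to the usual indexing).
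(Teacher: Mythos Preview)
Your proposal is correct and follows essentially the same route as the paper: split the long exact sequence of the triple via \cref{prop:retractionAn}, use strong excision to identify the relative term with $\tilde{M}_p(\partial\augn{Z}{n-1})$, then apply the Milnor sequence and invoke Mittag--Leffler to kill $\limone$. You are in fact more explicit than the paper on the one subtle point --- identifying the bonding map $(\partial\iota_n)_*$ with the coordinate projection so that surjectivity (and the product description of the limit) follow --- which the paper handles with the single sentence ``By \cref{prop:retractionAn}, the maps $M_{p+1}(\partial\augn{Z}{n+1})\to M_{p+1}(\partial\augn{Z}{n})$ are surjective.''
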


\subsection{$K$-theory of the Roe algebra}
Let $V$ be a proper metric space. We denote by $C^*(V)$ the Roe algebra
of $V$. We also denote by $KX_*(V)$ the coarse $K$-homology of $V$.
For detail, see \cite{MR1388312}, \cite{MR1344138} and
\cite{MR1817560}.

Suppose that $V$ is a proper coarsely convex space.
By \cite[Theorem 1.3]{FO-CCH}, the following coarse assembly map
is an isomorphism.
\begin{align*}
 \mu_{*} \colon KX_*(V)\to K_*(C^*(V)).
\end{align*}

By \cite[Theorem 6.7]{FO-CCH}, the following transgression map 
is an isomorphism.
\begin{align*}
 {T_{\partial V}} \colon KX_*(V) \to \tilde{K}_{*-1}(\partial V).  
\end{align*}
Thus we have an isomorphism
\begin{align}
\label{eq:K-Roe-b-map}
 {T_{\partial V}}\circ \mu_{*}^{-1}\colon K_*(C^*(V)) 
 \to \tilde{K}_{*-1}(\partial V).  
\end{align}

\begin{theorem}
\label{thm:K-Roe-alg-bdry-tree}
 Let $Z$ be a $(\xi,K,L,\{\gamma_k\}_{k\in K})$-tree of 
 geodesic coarsely convex spaces. Set $X_k:=\xi^{-1}(\starNb (k))$.
 Suppose that $Z$ is proper.
 Let $\aug{Z}$ be the augmented space of $Z$. Then,
 \begin{align*}
  K_p(C^*(Z)) 
 &\cong \tilde{K}_{p-1}(\partial \aug{Z}) \times
  \prod_{k\in K} \tilde{K}_{p-1}(\partial X_k)
\end{align*} 
\end{theorem}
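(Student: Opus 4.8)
The plan is to assemble \cref{thm:K-Roe-alg-bdry-tree} from the pieces already laid out in the excerpt, since essentially all the work has been done; what remains is to feed the homological computation through the coarse Baum--Connes machinery. First I would invoke \cref{thm:bicombing-on-Z} to note that $Z$, being a tree of geodesic coarsely convex spaces, admits an $(E,C)$-geodesic coarsely convex bicombing, hence is a proper coarsely convex space by \cref{lem:gccb-is-qgccb}. This puts $Z$ within the scope of the results of \cite{FO-CCH}, so by \eqref{eq:K-Roe-b-map} there is an isomorphism
\begin{align*}
 T_{\partial Z}\circ \mu_*^{-1}\colon K_p(C^*(Z)) \xrightarrow{\ \cong\ } \tilde{K}_{p-1}(\partial Z).
\end{align*}

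Next I would apply \cref{thm:homology} with $M_* = K_*$ the $K$-homology, which gives
\begin{align*}
 \tilde{K}_{p-1}(\partial Z) \cong \tilde{K}_{p-1}(\partial \aug{Z}) \times \prod_{k\in K} \tilde{K}_{p-1}(\partial X_k).
\end{align*}
Here one should check that $K$-homology is among the homology theories covered by \cref{thm:homology}: it is a Steenrod-type homology theory on compact metric spaces satisfying the strong excision and continuity (Milnor exact sequence) axioms used in \cref{sec:homology}, and $\partial Z$, $\partial \aug{Z}$, $\partial X_k$ are all compact metrizable by \cref{lem:Gromov-cptn}\ref{fact:item:cptn-top} since $Z$, $\aug{Z}$, and each $X_k$ are proper. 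Combining the two displayed isomorphisms yields the claimed formula. The degree shift is simply carried along: $K_p(C^*(Z)) \cong \tilde K_{p-1}(\partial Z)$, and the right-hand side decomposes in degree $p-1$.

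One subtlety to address explicitly: \cref{thm:homology} requires $Z$ to be proper, which is assumed in the statement, but it is worth remarking that properness of $Z$ forces properness of each $X_k = \xi^{-1}(\starNb(k))$ (as closed subspaces) and of $\aug{Z}$ (which is obtained from $Z$ by gluing combinatorial horoballs, a locally finite operation on proper pieces), so that all the boundaries appearing are compact and the (co)homological machinery applies uniformly. I would also note in passing that \cref{prop:retractionAn}, already used inside \cref{thm:homology} to kill the $\limone$ term, is what makes the product decomposition clean rather than merely an extension.

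The main obstacle, such as it is, is not conceptual but bookkeeping: one must confirm that the version of $K$-homology used here (Kasparov $K$-homology of the $C^*$-algebra of continuous functions, equivalently analytic $K$-homology of the compact space) is exactly the "$K$-homology" appearing in \cref{thm:homology} and in \cite[Theorem 6.7]{FO-CCH}, i.e.\ that the transgression isomorphism $T_{\partial Z}$ and the decomposition of \cref{thm:homology} are formulated with respect to the same functor, including the same convention for reduced theory and basepoints. Once that compatibility is granted, the proof is a two-line composition of citations, and I would present it as such.
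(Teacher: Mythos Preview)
Your proposal is correct and follows essentially the same approach as the paper: invoke \eqref{eq:K-Roe-b-map} to identify $K_p(C^*(Z))$ with $\tilde{K}_{p-1}(\partial Z)$, then apply \cref{thm:homology} with $M_*$ equal to $K$-homology to obtain the product decomposition. The paper's proof is the two-line composition you anticipated; your additional remarks on properness, compactness of the boundaries, and compatibility of $K$-homology conventions are sound elaborations but not new ingredients.
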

\begin{proof}
 Combining \cref{thm:homology}
 for $K$-homology with an isomorphism 
 \begin{align*}
  K_*(C^*(Z))\cong \tilde{K}_{*-1}(\partial Z)
 \end{align*}
 obtained by applying \cref{eq:K-Roe-b-map} for 
 geodesic coarsely convex space $Z$, we obtain the desired result.
\end{proof}

\begin{corollary}
 \label{cor:K-Roe-alg-tree}
 Let $Z$ be the tree of geodesic coarsely convex spaces
 described in \cref{thm:K-Roe-alg-bdry-tree}.
 We have
 \begin{align*}
  K_p(C^*(Z)) 
 &\cong \tilde{K}_{p-1}(\partial \aug{Z}) \times
  \prod_{k\in K} K_{p}(C^*(X_k))
\end{align*} 
\end{corollary}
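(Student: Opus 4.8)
The plan is to derive \cref{cor:K-Roe-alg-tree} from \cref{thm:K-Roe-alg-bdry-tree} by replacing each factor $\tilde{K}_{p-1}(\partial X_k)$ with $K_p(C^*(X_k))$. This is legitimate provided each component $X_k$ is itself a proper geodesic coarsely convex space, so that the isomorphism \cref{eq:K-Roe-b-map} applies to it.

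First I would record that for each $k\in K$, the subspace $X_k := \xi^{-1}(\starNb(k))$ is a geodesic space (by \cref{def:tree-of-spaces}) equipped with the geodesic coarsely convex bicombing $\gamma_k$ (by \cref{def:tree-of-gcc}); since $Z$ is proper and $X_k$ is closed in $Z$ (being of the form $\xi^{-1}$ of a closed subset), $X_k$ is proper as well. Hence $X_k$ is a proper geodesic coarsely convex space. Applying \cref{eq:K-Roe-b-map} to $V = X_k$ gives an isomorphism
\begin{align*}
 T_{\partial X_k}\circ \mu_*^{-1}\colon K_p(C^*(X_k)) \to \tilde{K}_{p-1}(\partial X_k).
\end{align*}

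Then I would simply substitute these isomorphisms into the conclusion of \cref{thm:K-Roe-alg-bdry-tree}. Taking the product over $k\in K$ of the isomorphisms $K_p(C^*(X_k))\cong \tilde{K}_{p-1}(\partial X_k)$ yields an isomorphism $\prod_{k\in K} K_p(C^*(X_k)) \cong \prod_{k\in K}\tilde{K}_{p-1}(\partial X_k)$, and combining with \cref{thm:K-Roe-alg-bdry-tree} gives
\begin{align*}
 K_p(C^*(Z)) \cong \tilde{K}_{p-1}(\partial \aug{Z}) \times \prod_{k\in K} K_p(C^*(X_k)),
\end{align*}
as claimed. There is no real obstacle here; the only point requiring a line of justification is that each $X_k$ inherits properness and the coarsely convex structure from $Z$, so that the coarse Baum--Connes machinery of \cite{FO-CCH} applies to the components individually.
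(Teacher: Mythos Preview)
Your proof is correct and follows essentially the same approach as the paper: apply the isomorphism \cref{eq:K-Roe-b-map} to each component $X_k$ and substitute into \cref{thm:K-Roe-alg-bdry-tree}. Your additional remark that each $X_k$ is proper and coarsely convex is a welcome clarification that the paper leaves implicit.
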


\begin{proof}
 Combining \cref{thm:K-Roe-alg-bdry-tree} with isomorphisms
 \begin{align*}
  K_*(C^*(X))\cong \tilde{K}_{*-1}(\partial X) \quad \text{and}\quad
  K_*(C^*(Y))\cong \tilde{K}_{*-1}(\partial Y),
 \end{align*}
 we obtain the desired result.
\end{proof}

\subsection{Case of free products}

\begin{proof}[Proof of \cref{thm:cohomology_free_prod,thm:K-Roe-alg-bdry,cor:K-Roe-alg}]
 Let $X$ and $Y$ be proper geodesic coarsely convex spaces with nets.
 Suppose that the net of $X$ and that of $Y$ are coarsely dense
 in $X$ and $Y$, respectively.
 Let $X * Y$ be the free product of $X$ and $Y$.
 Then $X * Y$ is a tree of space. Since 
 the net of $X$ and $Y$ are coarsely dense in $X$ and $Y$, respectively,
 by \cref{thm:Cantor}, the ideal boundary $\partial \aug{X*Y}$ of 
 the augmented space has no isolated point. 
 So, $\partial \aug{X*Y}$ is homeomorphic to the Cantor space 
 $\mathcal{C}$. Then applying 
 \cref{thm:cohomology,thm:homology,thm:K-Roe-alg-bdry-tree,cor:K-Roe-alg-tree} for $X*Y$,
 we obtain the desired result.
\end{proof}

\section{Construction of retractions}
\label{sec:constr-retarct}
Let $(Z,d_Z)$ be a $(\xi,K,L,\{\gamma_k\}_{k\in K})$-tree 
of geodesic coarsely convex space. We suppose that $Z$ is proper.
Let $\gamma$ be the geodesic coarsely convex bicombing on $Z$
associated with the family $\{\gamma_k\}_{k\in K}$.
By \cref{rem:dX_k-emb}, we can regard
$\partial X_k$ as a closed subspace of $\partial Z$.
The purpose of this section is to construct a retraction
$s_k\colon \partial Z\to \partial X_k$.

Let $k\in K$. For $x\in \partial Z$, set
\begin{align*}
 t(k;x):= \sup\{t\in \R: \rpgm(e,x,t)\in X_k\}. 
\end{align*}
We define a map $\pi_k\colon \partial Z\to \overline{X_k}$ by
\begin{align*}
 \pi_k(x):=\rpgm(e,x,t(k;x)).
\end{align*}

\begin{remark}
 For $x\in \partial Z\setminus \partial X_k$, we have 
 $t(k;x) = d(e,\rpgm(e,x,t(k;x)))$.
 For $x\in \partial X_k$, we have $t(k;x) = \infty$ and 
 $\pi_k(x)=\rpgm(e,x,\infty) = x$.
\end{remark}

\begin{lemma}
 \label{lem:estimate_Gprod1}
 We fix $k\in K$. Let $e\in X_k$ be a base point of $X_k$, and let $v,w\in X_k$ such that
 $v\neq w$. 
 Then for $x\in \pi_k^{-1}(v)$ and $y\in \pi_k^{-1}(w)$, we have
 \begin{align*}
  (x\mid y)_e < \Omega(\max\{d_Z(e,v), d_Z(e,w)\} + \Omega).
 \end{align*}
 Here $\Omega$ is a constant given in \cref{lem:univ-const}.
\end{lemma}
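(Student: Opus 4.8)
Fix $k \in K$ and take $x \in \pi_k^{-1}(v)$, $y \in \pi_k^{-1}(w)$ with $v \neq w$. The strategy is to bound $(x \mid y)_e$ from above by comparing the rays $\gamma_e^x$ and $\gamma_e^y$ against the points $v = \rpgm(e,x,t(k;x))$ and $w = \rpgm(e,y,t(k;y))$ where these rays \emph{leave} $X_k$. The point is that once $\gamma_e^x$ passes $v$ it exits $X_k$ (more precisely, it goes through the vertex $l \in L$ adjacent to $k$ that $\xi \circ \gamma_e^x$ crosses), and similarly $\gamma_e^y$ exits through a different vertex of $L$ — because the tree structure forces that if they exited through the same vertex they would pass through the same separating point of $Z$, contradicting $v \neq w$ together with the convexity of $X_k$. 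So after parameter roughly $\max\{t(k;x), t(k;y)\}$ the two rays are separated by the tree and diverge; hence $(\gamma_e^x \mid \gamma_e^y)_e$ is not much larger than $\max\{t(k;x), t(k;y)\} = \max\{d_Z(e,v), d_Z(e,w)\}$.

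\textbf{Key steps.} First I would record that for $x \in \partial Z \setminus \partial X_k$ we have $t(k;x) = d_Z(e, \pi_k(x)) = d_Z(e,v)$ by the Remark preceding the lemma (and the case $x \in \partial X_k$ needs separate, easy treatment: if $x \in \pi_k^{-1}(v)$ with $v \neq w$ and $x \in \partial X_k$, then $\pi_k(x) = x \in \partial X_k$, so $x$ cannot equal a point $v \in X_k$; thus this case is vacuous or handled by $\pi_k(x) = v$ forcing $v = x$, impossible). Second, I would show that the geodesic ray $\gamma_e^x$, restricted to parameters $> t(k;x)$, has image outside $X_k$; this is where I use that $X_k = \xi^{-1}(\starNb(k))$ is $\gamma$-convex (\cref{rem:dX_k-emb}) and that $t(k;x)$ is defined as the supremum of parameters with $\rpgm(e,x,t) \in X_k$ — so past $t(k;x)$ the ray has crossed the vertex of $L$ separating $X_k$ from the rest, and by the tree structure it never returns. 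Third, the separation argument: if both rays exited through the same $l \in L$ adjacent to $k$, then both would pass through the single point $\xi^{-1}(l)$, and by \ref{lem:ray-same-param} of \cref{lem:univ-const} the two rays would stay within $\Omega$ of each other up to that parameter and hence $\rpgm(e,x,t(k;x))$, $\rpgm(e,y,t(k;y))$ would be $\Omega$-close — but a sharper use of convexity, or of the fact that $v,w$ are the last points in $X_k$, should force $v = w$. Actually the cleaner route: once past the respective exit times the rays lie in different components of $Z \setminus X_k^{\mathrm{int}}$ (different "branches" of the tree), so $d_Z(\gamma_e^x(t), \gamma_e^y(t)) \geq d_Z(v, w)$-type lower bounds fail to help directly; instead one argues that the Gromov product witness parameter $t$ with $d_Z(\gamma_e^x(t), \gamma_e^y(t)) \leq D_1$ must satisfy $t \leq \max\{t(k;x), t(k;y)\} + (\text{bounded error})$, because for larger $t$ the tree geodesic from $\gamma_e^x(t)$ to $\gamma_e^y(t)$ must backtrack through both $v$ and $w$, giving $d_Z(\gamma_e^x(t), \gamma_e^y(t)) \geq 2(t - \max\{t(k;x),t(k;y)\}) - O(1) > D_1$ once $t$ is large. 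Finally, combining $(\gamma_e^x \mid \gamma_e^y)_e \leq \max\{t(k;x), t(k;y)\} + O(\Omega) = \max\{d_Z(e,v), d_Z(e,w)\} + O(\Omega)$ with \ref{lem:D3} of \cref{lem:univ-const}, namely $(x \mid y) \leq \Omega (\gamma_e^x \mid \gamma_e^y)$, yields $(x \mid y)_e \leq \Omega(\max\{d_Z(e,v), d_Z(e,w)\} + \Omega)$ after absorbing constants into $\Omega$ (enlarging it if needed, which is permitted since $\Omega$ depends only on $\lambda, k, E, C, \theta(0)$ and the geometry of the tree construction is controlled by those).

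\textbf{Main obstacle.} The delicate point is the quantitative "backtracking" estimate: making precise that a $Z$-geodesic (or quasi-geodesic) joining two points that lie deep along the rays $\gamma_e^x$ and $\gamma_e^y$, \emph{after} these rays have left $X_k$ through two distinct vertices of $L$, must pass through both exit points and therefore has length roughly $2t - d_Z(e,v) - d_Z(e,w)$. This requires knowing that the tree $T$ genuinely separates: that $\xi^{-1}$ of distinct adjacent $L$-vertices are distinct cut points of $Z$ and that geodesics between the two branches must traverse both. I expect this to follow from the explicit construction of the bicombing in \cref{thm:bicombing-on-Z} and the structure of trees of spaces, but assembling the constants cleanly — so that the final bound is exactly $\Omega(\max\{d_Z(e,v),d_Z(e,w)\} + \Omega)$ rather than something with extra additive or multiplicative junk — will be the fiddly part, and may need one more redefinition/enlargement of $\Omega$ at the end of \cref{lem:univ-const}.
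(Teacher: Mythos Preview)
Your approach is essentially the paper's: bound $(\gamma_e^x \mid \gamma_e^y)_e$ by the exit-time parameter and then apply \ref{lem:D3} of \cref{lem:univ-const}. What you flag as the ``main obstacle'' the paper dispatches in one line: by the tree-of-spaces structure the geodesic from $\gamma_e^x(t)$ to $\gamma_e^y(t)$, for the single choice $t := \max\{t(k;x),t(k;y)\} + \Omega$, is \emph{exactly} the concatenation through $v$ and $w$, so $d_Z(\gamma_e^x(t),\gamma_e^y(t)) = (t - t(k;x)) + d_Z(v,w) + (t - t(k;y)) \geq \Omega$, and then \ref{lem:maximizer} gives $(\gamma_e^x\mid\gamma_e^y)_e < t$ directly --- no asymptotic backtracking estimate or enlargement of $\Omega$ is needed.
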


\begin{proof}
 We define $t\in \R$ by
 \begin{align*}
  t=\max\{t(k;x),t(k;y)\} + \Omega = \max\{d_Z(e,v),d_Z(e,w)\} + \Omega
 \end{align*}
 The geodesic from $\gamma_e^x(t)$ to $\gamma_e^y(t)$ is obtained by 
 concatenating $\gamma(\gamma_e^x(t),v,-)$, $\gamma(v,w,-)$ and $\gamma(w,\gamma_e^y(t),-)$.
 So we have
 \begin{align*}
  d_Z(\gamma_e^x(t),\gamma_e^y(t)) &= d_Z(\gamma_e^x(t),v) + d_Z(v,w) + 
  d_Z(w,\gamma_e^y(t))\\
  &\geq  t- t(k;x) + d_Z(v,w) + t-t(k;y)\\
  &\geq \Omega. 
 \end{align*}
 Thus by \ref{lem:maximizer} of \cref{lem:univ-const}, we have
 $(\gamma_e^x\mid \gamma_e^y)_e < t$. Then by \ref{lem:qultmD} of \cref{lem:univ-const}, we 
 have $(x\mid y)_e\leq \Omega (\gamma_e^x\mid \gamma_e^y)_e \leq \Omega t$.
\end{proof}

\begin{lemma}
 \label{lem:estimate_Gprod2}
 We fix $k\in K$. Let $e\in X_k$ be a base point of $X_k$, and let $v\in X_k$.
 Then for $x\in \partial X_k$ and $y\in \pi_k^{-1}(v)$, we have
 \begin{align*}
  (x\mid y)_e < \Omega(d_Z(e,v) + \Omega).
 \end{align*}
 Here $\Omega$ is a constant given in \cref{lem:univ-const}.
\end{lemma}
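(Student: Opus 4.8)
The plan is to mimic the argument of \cref{lem:estimate_Gprod1}, but with one boundary point replaced by an actual point of $X_k$, exploiting the fact that $\pi_k$ sends $x\in\partial X_k$ to $x$ itself via $\gamma_e^x(\infty)$.

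First I would fix $x\in\partial X_k$ and $y\in\pi_k^{-1}(v)$, and set $t := t(k;y) + \Omega = d_Z(e,v) + \Omega$. The key geometric observation is that the geodesic from $\gamma_e^x(t)$ to $\gamma_e^y(t)$ must pass close to $v$: indeed $\gamma_e^y(t)$ lies in the horoball direction beyond $v$, so any geodesic from $\gamma_e^x(t)$ to $\gamma_e^y(t)$ decomposes (up to the tree-of-spaces structure) through $v$, since $v = \pi_k(y)$ is the exit point of $\gamma_e^y$ from $X_k$. Meanwhile $\gamma_e^x$ stays inside $X_k$ for all time, so $\gamma_e^x(t)\in X_k$. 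Concatenating $\gamma(\gamma_e^x(t), v, -)$ with $\gamma(v, \gamma_e^y(t), -)$ gives a geodesic, whence
\begin{align*}
 d_Z(\gamma_e^x(t), \gamma_e^y(t)) &= d_Z(\gamma_e^x(t), v) + d_Z(v, \gamma_e^y(t))\\
 &\geq (t - t(k;x)) + (t - t(k;y)),
\end{align*}
but $t(k;x) = \infty$ would break this estimate, so I would instead argue directly: $d_Z(\gamma_e^x(t), \gamma_e^y(t)) \geq d_Z(v, \gamma_e^y(t)) = t - t(k;y) = \Omega$. This already forces $(\gamma_e^x\mid\gamma_e^y)_e < t$ by \ref{lem:maximizer} of \cref{lem:univ-const}: if we had $(\gamma_e^x\mid\gamma_e^y)_e \geq t$, then $d_Z(\gamma_e^x(t),\gamma_e^y(t))\leq\Omega$, a contradiction with the strict inequality one gets by choosing the additive constant slightly more carefully (or one replaces $\Omega$ by $\Omega+1$ throughout and keeps the strict inequality honest).

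Then I would convert the bound on the ray Gromov product to a bound on the boundary Gromov product. Since $y\in\partial Z$ is a boundary point and $v\in X_k$, one applies \ref{lem:D3} and \ref{lem:qultmD} of \cref{lem:univ-const} — more precisely, the mixed version $(\gamma_e^x\mid p)_e$ for $p\in X$ together with the comparison $(x\mid y)_e \leq \Omega(\gamma_e^x\mid\gamma_e^y)_e$ — to conclude $(x\mid y)_e \leq \Omega t = \Omega(d_Z(e,v)+\Omega)$, giving the strict inequality $(x\mid y)_e < \Omega(d_Z(e,v)+\Omega)$ after absorbing the slack.

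The main obstacle I anticipate is bookkeeping around the point $x\in\partial X_k$: unlike in \cref{lem:estimate_Gprod1} we cannot write $t(k;x)$ as a finite exit time, so the symmetric decomposition of the connecting geodesic through two exit points degenerates. The fix is that $\gamma_e^x$ never leaves $X_k$, so the connecting geodesic decomposes through the single point $v$ only, and the lower bound $d_Z(\gamma_e^x(t),\gamma_e^y(t))\geq d_Z(v,\gamma_e^y(t))$ suffices. One must also confirm that $\gamma_e^x(t)$ and $v$ lie in the same $\gamma$-convex piece $X_k$ so that the concatenation through $v$ is genuinely a geodesic in $Z$ — this is exactly the tree-of-spaces structure together with $\gamma$-convexity of $X_k$ recorded in \cref{rem:dX_k-emb}, and it is the one place where properness of $Z$ and the normal-geodesic description from \cref{prop:geod} in the horoball pieces get used implicitly.
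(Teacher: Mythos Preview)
Your argument is correct and is exactly the ``similar argument'' the paper alludes to: set $t = d_Z(e,v) + \Omega$, use that $\gamma_e^x(t)\in X_k$ while $\gamma_e^y(t)\notin X_k$ so that the tree-of-spaces structure forces the geodesic between them to pass through $v$, obtain $d_Z(\gamma_e^x(t),\gamma_e^y(t))\geq t-t(k;y)=\Omega$, and conclude via \ref{lem:maximizer} and \ref{lem:D3} of \cref{lem:univ-const}. Your final paragraph's appeal to properness of $Z$ and to \cref{prop:geod} is unnecessary---only the separation property of the tree of spaces and the $\gamma$-convexity of $X_k$ from \cref{rem:dX_k-emb} are used---and the strict-versus-nonstrict inequality issue you flag is present verbatim in the paper's proof of \cref{lem:estimate_Gprod1} as well.
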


\begin{proof}
We can prove the lemma by a similar argument as 
the proof of \cref{lem:estimate_Gprod1}.
\end{proof}

\begin{lemma}
 The map $\pi_k\colon \partial Z\to \overline{X_k}$ is continuous.
\end{lemma}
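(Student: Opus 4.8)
The plan is to show that $\pi_k$ is continuous at every point of $\partial Z$, treating two cases: points of $\partial X_k$ and points outside $\partial X_k$. The outside case is handled by the quantitative separation estimates just established. Indeed, suppose $x\in \partial Z\setminus \partial X_k$, so $\pi_k(x)=v\in X_k$ with $t(k;x)=d_Z(e,v)<\infty$. Let $(x_j)$ be a sequence in $\partial Z$ converging to $x$, i.e.\ $(x\mid x_j)_e\to\infty$. If infinitely many $x_j$ lay in $\pi_k^{-1}(w)$ for some fixed $w\neq v$, or in $\partial X_k$, then \cref{lem:estimate_Gprod1} (resp.\ \cref{lem:estimate_Gprod2}) would bound $(x\mid x_j)_e$ by the constant $\Omega(\max\{d_Z(e,v),d_Z(e,w)\}+\Omega)$ along that subsequence, contradicting $(x\mid x_j)_e\to\infty$ once we also invoke the ultrametric-type inequality \ref{lem:qrayultmvisu} of \cref{lem:univ-const} to relate $(x\mid x_j)$ to the relevant Gromov products. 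More carefully, the point is that the sets $\pi_k^{-1}(w)$ over $w\in X_k$, together with $\partial X_k$, are "uniformly separated in the Gromov product as seen from $e$ up to a bound growing only with $d_Z(e,w)$", so any sequence with $(x\mid x_j)_e\to\infty$ must eventually have $\pi_k(x_j)$ in a fixed ball of $X_k$ and in fact $\pi_k(x_j)\to v$; here one uses that $X_k$ is a proper geodesic space and that $\gamma_e^{x_j}(t)$ is a geodesic agreeing with $\gamma_e^x(t)$ up to distance $\Omega$ for $t\le (\gamma_e^x\mid\gamma_e^{x_j})$ by \ref{lem:maximizer}, so the truncation points $\pi_k(x_j)=\gamma_e^{x_j}(t(k;x_j))$ converge to $\pi_k(x)$ in $\overline{X_k}$.

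For the case $x\in\partial X_k$ I would argue that $\pi_k$ and the identity on $\partial X_k$ agree, and reduce continuity at $x$ to continuity at infinity in the sense of \cref{def:conti-at-infty}. Concretely, let $(x_j)\to x$ with $x\in\partial X_k$. If a subsequence lies in $\partial X_k$, then $\pi_k(x_j)=x_j\to x$ trivially since $\partial X_k$ carries the subspace topology (\cref{rem:dX_k-emb}, via \cref{cor:sub-sp-closure}). For a subsequence outside $\partial X_k$ we have $\pi_k(x_j)=v_j:=\gamma_e^{x_j}(t(k;x_j))\in X_k$; I claim $(x\mid v_j)_e\to\infty$, which forces $v_j\to x$ in $\overline{X_k}$, hence $\pi_k(x_j)\to x$. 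To see $(x\mid v_j)_e\to\infty$: since $\gamma_e^{x_j}$ is a geodesic that tracks $\gamma_e^x$ within $\Omega$ up to parameter $(\gamma_e^x\mid\gamma_e^{x_j})_e$ by \ref{lem:maximizer}, and $t(k;x_j)\le (\gamma_e^x\mid\gamma_e^{x_j})_e + O(1)$ — because $\gamma_e^x$ stays in $X_k$ (as $x\in\partial X_k$ and $X_k$ is $\gamma$-convex) so the geodesic $\gamma_e^{x_j}$ must also remain within $\Omega$ of $X_k$ throughout that range — the point $v_j$ lies within a bounded distance of $\gamma_e^x(t(k;x_j))$, and $t(k;x_j)\to\infty$ since $(\gamma_e^x\mid\gamma_e^{x_j})_e\to\infty$ (by \ref{lem:D3}, because $(x\mid x_j)_e\to\infty$). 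An application of \ref{gprod-contraction} of \cref{lem:univ-const} then gives $(x\mid v_j)_e\to\infty$.

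I expect the main obstacle to be the precise bookkeeping around the truncation parameter $t(k;x_j)$: one must show that the geodesic $\gamma_e^{x_j}$ leaves $X_k$ for the last time near where $\gamma_e^x$ would, and that the exit point converges to $\pi_k(x)$ in the topology of $\overline{X_k}$ rather than merely in $\overline{Z}$. This requires combining the geodesic-tracking estimate \ref{lem:maximizer}, the fact that $X_k$ is $\gamma$-convex and closed with $\partial X_k$ embedded via \cref{cor:sub-sp-closure}, and properness of $X_k$ to upgrade "the exit points lie in a fixed bounded set" to genuine convergence. The separation estimates \cref{lem:estimate_Gprod1,lem:estimate_Gprod2} do the heavy lifting in the outside case, so most of the remaining work is the $\partial X_k$ case, where the delicate step is controlling $t(k;x_j)$ from below and identifying the limit.
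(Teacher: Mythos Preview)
Your two-case split matches the paper's, and the ingredients you reach for (the separation estimates \cref{lem:estimate_Gprod1,lem:estimate_Gprod2}, the tracking bound \ref{lem:maximizer}, and coarse convexity) are the right ones. But you miss two simplifications that make the paper's argument much shorter.

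\textbf{Case $x\notin\partial X_k$.} The paper does not argue sequentially at all: it simply notes that $\pi_k^{-1}(v)$ is \emph{open}, so $\pi_k$ is locally constant there. This uses \cref{lem:estimate_Gprod1,lem:estimate_Gprod2} together with one extra geometric fact about trees of spaces: once $\gamma_e^x$ leaves $X_k$ through the gate $v\in\xi^{-1}(L)$, its distance to $X_k$ grows linearly, so any $\gamma_e^y$ that tracks $\gamma_e^x$ past time $d_Z(e,v)+D_1+1$ must also have exited $X_k$ by then, forcing $d_Z(e,\pi_k(y))\le d_Z(e,v)+O(1)$ and hence (via \cref{lem:estimate_Gprod1}) $\pi_k(y)=v$. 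Your sequential argument can be completed along these lines, but the tracking-plus-properness route you sketch is more labor than needed and never quite nails the conclusion.

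\textbf{Case $x\in\partial X_k$.} Here you identify ``controlling $t(k;x_j)$ from below'' as the delicate step and try to get it from geodesic tracking. But \cref{lem:estimate_Gprod2} gives this \emph{immediately}: it says $(x\mid x_j)_e<\Omega\bigl(d_Z(e,\pi_k(x_j))+\Omega\bigr)$, so $t(k;x_j)=d_Z(e,\pi_k(x_j))>\Omega^{-1}(x\mid x_j)_e-\Omega\to\infty$. The paper then finishes with a direct coarse-convexity contraction to obtain $(x\mid\pi_k(y))_e\ge n$ from $(x\mid y)_e>EC\Omega(n+\Omega)$. Your inequality $t(k;x_j)\le(\gamma_e^x\mid\gamma_e^{x_j})_e+O(1)$ points the wrong way for concluding $t(k;x_j)\to\infty$; what you actually need is the lower bound, and \cref{lem:estimate_Gprod2} hands it to you without any tree-structure reasoning.
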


\begin{proof}
 Let $x\in \partial Z$. We will show that $\pi_k$ is continuous at $x$.
 First, we suppose that $x \in \partial Z\setminus \partial X_k$. Set $v=\pi_k(x)$.
 Then by \cref{lem:estimate_Gprod1}, $\pi_k^{-1}(v)$ is an open set. This implies that
 $\pi_k$ is continuous at $x$.

 Now we suppose that $x\in \partial X_k$. We remark that $\pi_k(x)=x$.
 For $r\in \Rp$, set 
 $V_r[x]:=\{y\in \overline{X}:(x\mid y)> r\}$. 
 The family $\{V_n[x]:n\in \N\}$ forms a system of 
 the fundamental neighborhood of $x$.
 We will show that for each $n\in \N$, there exists $T\in \Rp$ 
 such that
\begin{align}
 \label{eq:pi_k}
  \pi_k(V_T[x])\subset V_{n-1}[\pi_k(x)] = V_{n-1}[x].
\end{align}

 We fix $n\in \N$. Set $T=EC\Omega(n+\Omega)$. 
 Let $y\in V_{T}[x]$.
 Thus $(x\mid y)_e > T$. Set $v=\pi_k(y)$. 
 By \cref{lem:estimate_Gprod2}, we have 
 \begin{align*}
  t(k;y) = d_Z(e,v)\geq \Omega^{-1}(x\mid y)_e -\Omega > ECn.
 \end{align*}
 By \ref{item:geod-bicombing-at-infty} of \cref{lem:univ-const},
 we have $d_Z(\gamma_e^x(T),\gamma_e^y(T))\leq \Omega$.
 Set $T'\coloneqq ECn$.
 Then we have
 \begin{align*}
  d_Z(\gamma_e^y(T'),\rp\gamma(e,v,T')) 
  &\leq Ed_Z(v,v) +C = C
 \end{align*}
 Here we used that $\gamma_e^y(t(k;y))=v=\rp\gamma(e,v,t(k;y)))$.
We have
 \begin{align*}
  d_Z(\gamma_e^x(n),\rp\gamma(e,v,n)) 
  &\leq \frac{1}{EC}E d_Z(\gamma_e^y(T'),\rp\gamma(e,v,T')) + C\\
  &\leq C+1. 
 \end{align*}
 It follows that $(x\mid v)\geq n$. 
 Thus $v=\pi_k(y)\in V_{n-1}[x]$. Therefore \cref{eq:pi_k} holds.
 This implies that $\pi_k$ is continuous at $x$.
\end{proof}

Let $\Psi_k\colon \overline{X_k}\to \partial X_k$ be the map
constructed in \cref{sec:angle-map}.

\begin{definition}
 We define a retraction $s_k\colon \partial Z\to \partial X_k$ by
 \begin{align*}
  s_k = \Psi_k \circ \pi_k.
 \end{align*}
\end{definition} 

\begin{proposition}
\label{prop:retractionZ}
 The map $s_k\colon \partial Z\to \partial X_k$ is continuous.
\end{proposition}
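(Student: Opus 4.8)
The plan is to write $s_k = \Psi_k \circ \pi_k$ and check continuity factor by factor. The map $\Psi_k \colon \overline{X_k} \to \partial X_k$ is continuous at infinity by \cref{prop:Psi-conti-at-infty}, so it suffices to know that $\pi_k$ is continuous and that it lands, at the "infinite" points of $\overline{X_k}$, in a way compatible with that partial continuity. More precisely, the key observation is that $\pi_k(\partial Z) \subset \overline{X_k}$ and, by the Remark following the definition of $\pi_k$, for $x \in \partial X_k$ we have $\pi_k(x) = x \in \partial X_k$, while for $x \in \partial Z \setminus \partial X_k$ we have $\pi_k(x) \in X_k$. So $\pi_k^{-1}(\partial X_k) = \partial X_k$, and $\Psi_k$ restricted to $\pi_k(\partial Z)$ fails continuity, if at all, only at points of $\partial X_k$, where $\Psi_k$ is the identity.

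First I would invoke the preceding lemma, which states that $\pi_k \colon \partial Z \to \overline{X_k}$ is continuous; this is already proved in the excerpt, so I may assume it. Next, for a point $x \in \partial Z \setminus \partial X_k$, continuity of $s_k$ at $x$ is immediate: by \cref{lem:estimate_Gprod1} the fibre $\pi_k^{-1}(\pi_k(x))$ is open in $\partial Z$ (the estimate $(x'\mid y')_e < \Omega(\max\{d_Z(e,v),d_Z(e,w)\}+\Omega)$ forces any two distinct values of $\pi_k$ to be separated by a neighbourhood, so each fibre over a point of $X_k$ is open), hence $\pi_k$ is locally constant near $x$, and therefore so is $s_k = \Psi_k\circ\pi_k$.

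The remaining case is $x \in \partial X_k$, and this is the only real point. Here $\pi_k(x) = x$ and $s_k(x) = \Psi_k(x) = x$. Let $(x_n)$ be a sequence in $\partial Z$ converging to $x$. By continuity of $\pi_k$, the sequence $\pi_k(x_n)$ converges to $\pi_k(x) = x$ in $\overline{X_k}$. Now $x \in \partial X_k = \overline{X_k}\setminus X_k$, so this is convergence to a point "at infinity" of $\overline{X_k}$, and $\Psi_k$ is continuous at infinity by \cref{prop:Psi-conti-at-infty}. Hence $s_k(x_n) = \Psi_k(\pi_k(x_n))$ converges to $\Psi_k(x) = x = s_k(x)$. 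Since $\partial Z$ is metrizable (it is the ideal boundary of a proper coarsely convex space, hence compact metrizable by \cref{lem:Gromov-cptn}), sequential continuity at $x$ gives continuity at $x$. This handles all points of $\partial Z$, so $s_k$ is continuous. Finally $s_k$ restricts to the identity on $\partial X_k$ since $\pi_k$ and $\Psi_k$ both do, so it is a retraction.

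The main obstacle is conceptual rather than computational: one must be careful that "continuous at infinity" for $\Psi_k$ is exactly the missing ingredient and that $\pi_k$ delivers $\partial Z$-limits into $\partial X_k$ precisely at the points of $\partial X_k$ — i.e. that the only places where $\Psi_k$'s continuity is in doubt are shielded by $\pi_k$ being continuous and landing there only as a limit of its own "at-infinity" behaviour. The estimates in \cref{lem:estimate_Gprod1,lem:estimate_Gprod2} do the heavy lifting for that matching, and they are already established, so the proof itself is short.
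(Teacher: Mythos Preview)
Your proof is correct and follows essentially the same approach as the paper: factor $s_k = \Psi_k \circ \pi_k$, invoke the continuity of $\pi_k$ from the preceding lemma, and apply \cref{prop:Psi-conti-at-infty} for $\Psi_k$. Your version is in fact more careful than the paper's, which treats all $x\in\partial Z$ uniformly and asserts $\pi_k(x)\in\partial X_k$ (true only when $x\in\partial X_k$); your explicit case split, using the open-fibre argument from \cref{lem:estimate_Gprod1} to handle $x\notin\partial X_k$, cleanly disposes of the points where $\Psi_k$ need not be continuous.
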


\begin{proof}
 Let $x\in \partial Z$. We will show that $\pi_k$ is continuous at $x$.


 Let $x_n$ be a sequence in $\partial Z$ converging to $x$. 
 Since $\pi_k$ is continuous, the sequence $\pi_k(x_n)$ converges
 to $\pi_k(x)\in \partial X_k$. Then by \cref{prop:Psi-conti-at-infty},
 the sequence $(\Psi_k(\pi_k(x_n)))$ converges to 
 $\Psi_k(\pi_k(x))$.
\end{proof}

\begin{proof}[Proof of \cref{prop:retractionAn}]
 Applying \cref{prop:retractionZ} to the tree of spaces
 $\augn{Z}{n}$ associated with the map
 $\augn{\xi}{n}\colon \augn{Z}{n}\to T$, 
 we obtain the desired result.
\end{proof}

\section{Ideal boundaries of the augmented spaces}
\label{sec:ideal-bound-augm}
Let $(Z,d_Z)$ be a $(\xi,K,L,\{\gamma_k\}_{k\in K})$-tree 
of geodesic coarsely convex space. We suppose that $Z$ is proper.


Let $\aug{Z}$ be the augmented space of $Z$. For simplicity,
we denote by $\gamma$ an augmented bicombing on $\aug{Z}$
associated with the family $\{\gamma_k\}_{k\in K}$.

For $x\in \overline{\aug{Z}}$, set 
$W_n[x] =\{y\in \partial \aug{Z}: (\gamma_e^x \mid \gamma_e^y)_e \geq n\}$. 
Recall that $\cbHoro(X_k^{(0)})^{(0)}$ denotes the vertex set of 
the combinatorial horoball $\cbHoro(X_k^{(0)})$.
We consider the family
\begin{align*}
 \mathcal{T}:=\bigsqcup_{k\in K} \{W_n[x]: x\in \cbHoro(X_k^{(0)})^{(0)}, n\in \N\}.
\end{align*}
Since $Z$ is proper, for any bounded subset 
$B\subset Z$, the cardinality of the set $B\cap X_k^{(0)}$ is finite.
It follows that $\mathcal{T}$ is countable.

\begin{lemma}
 \label{lem:cble-fam}
 The family $\mathcal{T}$ is a countable family of open basis of $\partial \aug{Z}$.
\end{lemma}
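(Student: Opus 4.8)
The plan is to verify the two requirements separately: first that $\mathcal{T}$ is countable, and second that it forms a basis for the topology of $\partial\aug{Z}$. The countability has essentially already been observed in the paragraph preceding the statement --- since $Z$ is proper, each bounded ball in $Z$ meets each lattice $X_k^{(0)}$ in finitely many points, and there are countably many $k\in K$, so $\bigsqcup_k \cbHoro(X_k^{(0)})^{(0)} = \bigsqcup_k X_k^{(0)}\times(\N\cup\{0\})$ is countable; taking countably many $W_n[x]$ for each such $x$ keeps the family countable. I would simply record this. The topological content is entirely in the second part.

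For the basis property, I would first note that by \cref{lem:univ-const}\ref{lem:D3} and the definition of the uniformity $\{V_n\}$ generating $\mathcal{T}_{(\cdot\mid\cdot)}$, the sets $W_n[x]$ for $x\in\overline{\aug Z}$ and $n\in\N$ form a neighborhood basis at each point of $\partial\aug Z$: indeed $W_n[x]\cap\partial\aug Z$ is, up to the universal constant $\DA$, comparable to $V_m[x]\cap\partial\aug Z$. So each $W_n[x]\in\mathcal T$ is open, and it remains to show that for an arbitrary point $x\in\partial\aug Z$ and arbitrary $n\in\N$ there is some $W_m[z]\in\mathcal T$ (i.e. with $z$ a vertex of some combinatorial horoball) such that $x\in W_m[z]\subset W_{n}[x]$. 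The idea is to approximate: the vertex set $\bigsqcup_k \cbHoro(X_k^{(0)})^{(0)}$ together with its image under the relevant identifications is coarsely dense in $\aug Z$ (the lattices $X_k^{(0)}$ are $2$-dense in $X_k$, and the $X_k$'s with the horoball vertices cover $\aug Z$ coarsely), so one can pick a vertex $z$ with $d_{\aug Z}(\gamma_e^x(T),z)$ bounded for $T$ large, forcing $(\gamma_e^x\mid\gamma_e^z)_e$ large; then by \cref{lem:univ-const}\ref{lem:qultmD} (the quasi-ultrametric inequality) any $y$ close to $z$ in the sense $(\gamma_e^z\mid\gamma_e^y)_e$ large is also close to $x$, which gives the desired inclusion after adjusting constants.

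More concretely, the key step is: given $x\in\partial\aug Z$ and $n\in\N$, choose a parameter $t$ large (on the order of $\DA n$), look at the point $\gamma_e^x(t)\in\aug Z$, and use coarse density of the horoball vertex set to find $z\in\cbHoro(X_k^{(0)})^{(0)}$ (for the appropriate $k$, or a vertex in the part of $\aug Z$ coming from an unmodified $X_k$, where we note $X_k^{(0)}\subset\cbHoro(X_k^{(0)})^{(0)}$ at depth $0$) with $d_{\aug Z}(\gamma_e^x(t),z)$ uniformly bounded. A bicombing convexity estimate --- \cref{lem:g-rpgm_cconvex} applied with $c$ a small constant --- then yields $d_{\aug Z}(\gamma_e^x(ct),\gamma_e^z(ct))\leq D_1$, hence $(\gamma_e^x\mid\gamma_e^z)_e\geq ct$. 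Choosing $t$ appropriately makes this at least, say, $\DA n + \DA$. Now set $m:= \DA n + 1$ (or whatever the arithmetic demands) and take $W_m[z]$. For $y\in W_m[z]$ we have $(\gamma_e^z\mid\gamma_e^y)_e\geq m$, so \cref{lem:univ-const}\ref{lem:qultmD} gives $(\gamma_e^x\mid\gamma_e^y)_e\geq \DA^{-1}\min\{(\gamma_e^x\mid\gamma_e^z)_e,(\gamma_e^z\mid\gamma_e^y)_e\}\geq n$, i.e. $y\in W_n[x]$; and $x\in W_m[z]$ by the same inequality with roles adjusted. Since $W_m[z]\in\mathcal T$, this completes the argument.

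The main obstacle I anticipate is bookkeeping the coarse density of the horoball vertex set inside $\aug{Z}$ uniformly in $k$, and making sure the constants in the nested application of the quasi-ultrametric inequality line up: one needs a single bound, independent of $k$ and of the point $x$, for how far $\gamma_e^x(t)$ can be from the nearest vertex in $\cbHoro(X_k^{(0)})^{(0)}$. This follows because the lattices are $2$-dense and the geodesic coarsely convex bicombing on $\aug Z$ was built (via \cref{thm:bicombing-on-Z}) with uniform constants $E,C$ from the uniform constants $(E',C')$ of the components, but it should be stated carefully. The rest is a routine, if slightly fiddly, chase through \cref{lem:univ-const}.
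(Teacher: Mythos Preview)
Your proposal is correct and follows essentially the same route as the paper: record countability as already observed, then for the basis property pick a large parameter $t$, approximate $\gamma_e^x(t)$ by a nearby horoball vertex $z$ (using $2$-density of $\cbHoro(X_k^{(0)})^{(0)}$ in $\Horo(X_k,X_k^{(0)})$), show the relevant Gromov product is large, and conclude via the quasi-ultrametric inequality from \cref{lem:univ-const}. The only cosmetic differences are that the paper packages the convexity step through \cref{lem:gprod-nearpts} rather than invoking \cref{lem:g-rpgm_cconvex} directly, and it targets the basic neighbourhood $V_n[x]$ rather than $W_n[x]$; both discrepancies are absorbed by \cref{lem:univ-const}\ref{lem:D3}, exactly as you note.
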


\begin{proof}
 We have already seen that $\mathcal{T}$ is countable.
 Let $x\in \partial \aug{Z}$, and $n\in \N$. We will show that 
 there exists $U\in \mathcal{T}$ such that 
 \begin{align*}
  x\in U\subset V_n[x]\cap \partial \aug{Z}.
 \end{align*} 
 We choose $t\in \Rp$ such that:
 \begin{align*}
  t&\geq 4E{\Omega^3}n +2,\\
  \rpgm(e,x,t) &= \gamma_e^x(t)\in \Horo(X_k,X_k^{(0)}) \text{ for some } k\in K.
 \end{align*}
 There exists $x'\in \cbHoro(X_k^{(0)})^{(0)}$ such that 
 $d_{\aug{Z}}(\gamma_e^x(t),x')\leq 2$ since $\cbHoro(X_k^{(0)})^{(0)}$ is 2-dense in 
 $\Horo(X_k,X_k^{(0)})$. By \cref{lem:gprod-nearpts}, we have
 \begin{align*}
  (\gamma_e^x(t)\mid x') &\geq 
  \frac{\min\{d_{\aug{Z}}(e,\gamma_e^x(t)),\, d_{\aug{Z}}(e,x')\}}{2Ed_{\aug{Z}}(\gamma_e^x(t),x')}  
  \geq \frac{t-2}{4E}. 
 \end{align*}
 So
 \begin{align*}
  (\gamma_e^x\mid x')&\geq \Omega^{-1}
  \min\{(\gamma_e^x\mid \gamma_e^x(t)),(\gamma_e^x(t)\mid x')\}
  \geq \frac{t-2}{4E\Omega}\geq \Omega^2 n.
 \end{align*}
  Then for all  $y\in W_{t}[x']$, 
  \begin{align*}
   (x\mid y)\geq \Omega^{-2}
   \min\{(x\mid \gamma_e^x),(\gamma_e^x\mid x'),(x'\mid y)\}\geq n.
  \end{align*}
 Here we used that $(x'\mid y)\geq (\gamma_e^{x'}\mid \gamma_e^y)\geq t\geq n$.
 Therefore $x\in W_t[x']\subset V_n[x]\cap \partial \aug{Z}$. 
 This implies that $\mathcal{T}$ is an open basis of $\partial \aug{Z}$.%
\end{proof}

\begin{lemma}
\label{lem:clopen}
 Each $W\in \mathcal{T}$ is a clopen set.
\end{lemma}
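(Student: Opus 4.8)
The plan is to show that each $W=W_n[x']$ with $x'\in \cbHoro(X_k^{(0)})^{(0)}$ is both open and closed in $\partial\aug{Z}$. Openness already follows from \cref{lem:cble-fam} (the family $\mathcal{T}$ consists of open sets, being a subfamily of the open basis), so the content is closedness, equivalently that the complement $\partial\aug{Z}\setminus W$ is open. I would argue this by producing, for each $y\in\partial\aug{Z}\setminus W$, a basic neighborhood of $y$ disjoint from $W$.

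First I would fix $y\notin W$, so $(\gamma_e^{x'}\mid\gamma_e^y)_e = m$ for some finite $m < n$. The key geometric input is \ref{lem:qultmD} of \cref{lem:univ-const}: for any $z\in\partial\aug{Z}$,
\begin{align*}
 (\gamma_e^{x'}\mid\gamma_e^z) \geq \Omega^{-1}\min\{(\gamma_e^{x'}\mid\gamma_e^y),(\gamma_e^y\mid\gamma_e^z)\}.
\end{align*}
The plan is to use the contrapositive: if $z$ lies close enough to $y$ in the Gromov sense — say $z\in W_N[y]$, i.e. $(\gamma_e^y\mid\gamma_e^z)\geq N$ for suitably large $N$ — and also $z\in W$, then $(\gamma_e^{x'}\mid\gamma_e^z)\geq n$ while $(\gamma_e^{x'}\mid\gamma_e^y)=m<n$, and I want to derive a contradiction. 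The cleanest way is to apply the near-ultrametric inequality in the other direction: by symmetry and \ref{lem:qultmD} again, $(\gamma_e^{x'}\mid\gamma_e^y)\geq \Omega^{-1}\min\{(\gamma_e^{x'}\mid\gamma_e^z),(\gamma_e^z\mid\gamma_e^y)\}\geq \Omega^{-1}\min\{n,N\}$. Choosing $N\geq n$ at the outset, this forces $m=(\gamma_e^{x'}\mid\gamma_e^y)\geq \Omega^{-1}n$. So the argument works cleanly only when $m<\Omega^{-1}n$; to handle the general $m<n$ I would instead first pass to the basic neighborhoods indexed by $W$-sets themselves. Concretely: since $\mathcal{T}$ is a basis (\cref{lem:cble-fam}), $\partial\aug{Z}\setminus W$ is a union of sets $W_\ell[x'']$; it suffices to show that for $y\notin W$ there is some basic $W_\ell[x'']\in\mathcal{T}$ with $y\in W_\ell[x'']$ and $W_\ell[x'']\cap W=\emptyset$. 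Picking $x''$ along $\gamma_e^y$ at depth large enough and $\ell$ large, the same near-ultrametric computation as in \cref{lem:cble-fam} shows $y\in W_\ell[x'']$, and the emptiness of the intersection reduces to: no point can be $\Omega$-deep-close to both $x'$ and $x''$ when $x',x''$ sit in distinct horoballs, or in the same horoball but at incompatible depths — this is exactly the type of estimate proved in \cref{lem:estimate_Gprod1} and \cref{lem:estimate_Gprod2} transplanted to $\cbHoro(X_k^{(0)})$, bounding $(\gamma_e^{x'}\mid\gamma_e^{x''})_e$ by a finite constant depending only on $d(e,x'),d(e,x'')$.

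The key steps, in order: (1) recall $W$ is open from \cref{lem:cble-fam}; (2) fix $y\notin W$ and choose a point $x''\in\cbHoro(X_k^{(0)})^{(0)}$ along $\gamma_e^y$ at depth exceeding $4E\Omega^3 n$ together with large parameter $\ell$, so that $y\in W_\ell[x'']$, exactly as in the proof of \cref{lem:cble-fam}; (3) show $W_\ell[x'']\cap W=\emptyset$: for $z$ in the intersection, $(\gamma_e^{x'}\mid\gamma_e^z)\geq n$ and $(\gamma_e^{x''}\mid\gamma_e^z)\geq \ell$, hence by \ref{lem:qultmD} of \cref{lem:univ-const} applied to the triple $x',z,x''$, $(\gamma_e^{x'}\mid\gamma_e^{x''})\geq \Omega^{-1}\min\{n,\ell\}$, which is unbounded as $\ell,n\to\infty$, contradicting the finite upper bound on $(\gamma_e^{x'}\mid\gamma_e^{x''})$ coming from a horoball-analogue of \cref{lem:estimate_Gprod1}/\cref{lem:estimate_Gprod2} (using \ref{lem:maximizer} of \cref{lem:univ-const} and the explicit geodesic shape in combinatorial horoballs from \cref{prop:geod}); (4) conclude $\partial\aug{Z}\setminus W$ is a union of basic open sets, hence open, so $W$ is clopen.

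The main obstacle I expect is step (3), specifically establishing the finite upper bound on $(\gamma_e^{x'}\mid\gamma_e^{x''})_e$ for distinct lattice points $x',x''$ of the combinatorial horoballs. When $x'$ and $x''$ lie in different horoballs this is essentially \cref{lem:estimate_Gprod1} applied inside $\aug{Z}$ (the geodesic from $\gamma_e^{x'}(t)$ to $\gamma_e^{x''}(t)$ must pass through the branch point joining the two horoballs and so has length growing linearly in $t$). The subtler case is $x'\neq x''$ in the \emph{same} horoball $\Horo(X_k,X_k^{(0)})$: here I would invoke \cref{prop:geod} to see that a geodesic between two deep points of the horoball stays near a normal geodesic consisting of vertical segments and a short horizontal segment near depth $\max$ of the two, so once the common depth of $\gamma_e^{x'}(t)$ and $\gamma_e^{x''}(t)$ exceeds the level at which the horizontal hop between $x'$ and $x''$ exists, the two rays have separated permanently; this pins down $(\gamma_e^{x'}\mid\gamma_e^{x''})_e$ by a constant depending on $d_X(x'_0,x''_0)$ and $d(e,x'),d(e,x'')$ but not on $n$ or $\ell$, which is all that is needed for the contradiction. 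The remaining steps are routine applications of the near-ultrametric estimates already packaged in \cref{lem:univ-const}.
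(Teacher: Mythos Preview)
Your step~(3) does not close. The parameter $n$ is \emph{fixed} by $W=W_n[x']$, so the lower bound you obtain from \ref{lem:qultmD} is only $(\gamma_e^{x'}\mid\gamma_e^{x''})\geq \Omega^{-1}\min\{n,\ell\}=\Omega^{-1}n$ once $\ell\geq n$; it is not ``unbounded as $\ell,n\to\infty$''. For a contradiction you would need $(\gamma_e^{x'}\mid\gamma_e^{x''})<\Omega^{-1}n$, and you give no reason for this. In fact the only control you have on $(\gamma_e^{x'}\mid\gamma_e^{x''})$ via the near-ultrametric (applied to $x',y,x''$, using $(\gamma_e^{x'}\mid\gamma_e^y)<n$ and $(\gamma_e^{x''}\mid\gamma_e^y)$ large) is an \emph{upper} bound of order $\Omega n$, which is perfectly compatible with the lower bound $\Omega^{-1}n$. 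This is exactly the obstruction you already identified in your first attempt (``works cleanly only when $m<\Omega^{-1}n$''); passing to basis neighborhoods $W_\ell[x'']$ does not remove it, because every application of \ref{lem:qultmD} costs a factor of $\Omega$ and you cannot squeeze the interval $[\Omega^{-1}n,n)$ to a point.

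The paper's proof avoids this multiplicative loss by exploiting the explicit tree-of-horoballs structure of $\aug{Z}$ rather than the abstract near-ultrametric. Given $y\notin W_n[x]$, one chooses $T$ large enough (depending on the depth of $x$ in its horoball and on $s(k;y)$ or $t(k;y)$) so that for every $y'\in W_{2T}[y]$ the geodesics $\gamma_e^y$ and $\gamma_e^{y'}$ \emph{literally coincide} on $[0,T]$: they travel through the same branch points of the tree and along the same vertical segment in the relevant horoball. This exact equality (rather than $D_1$-closeness up to an $\Omega$-factor) transfers the hypothesis $y\notin W_n[x]$ directly to $y'\notin W_n[x]$ with no loss. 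Your soft approach cannot produce such an exact coincidence statement, and that is the missing ingredient.
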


\begin{proof}
 Let $k\in K$. Let $x\in \cbHoro(X_k^{(0)})^{(0)}$ and $n\in \N$. 
 We will show that 
 $W_n[x]$ is closed. Let $y\in \partial \aug{Z}\setminus W_n[x]$.
 We will show that there exists $T>0$ such that
 \begin{align*}
  W_{2T}[y]\subset \partial \aug{Z}\setminus W_n[x].
 \end{align*}
 For $p\in \partial \aug{Z}$, set 
 \begin{align*}
   s(k;p):= \inf\{t\in \R: \rpgm(e,p,t)\in X_k^{(0)}\}, \quad
   t(k;p):= \sup\{t\in \R: \rpgm(e,p,t)\in X_k\}
 \end{align*} 
 Here we use the convention that $\inf \emptyset = \infty$
 and $\sup \emptyset = -\infty$.
 First, we suppose $y=c_k\in \overline{\cbHoro(X_k^{(0)})^{(0)}}$.
 Then $s(k;y)<\infty$ and $t(k;y)=\infty$. 
 We remark that the geodesic $\rpgm(\gamma_e^y(s(k;y)),y,-)$ consists of vertical
 line in $\Horo(X_k,X_k^{(0)})$. 
 Let $h$ be the depth of $x$ in $\cbHoro(X_k^{(0)})^{(0)}$. Set
 $T:=h + s(k;y)+ n+D_1$. 
 Let $y'\in W_{2T}[y]$. We have 
 \begin{align*}
  \rpgm(e,y,t) = \rpgm(e,y',t) \qquad (\forall t\in [0,T]).
 \end{align*}
 See \cref{fig:1}. 
 If $y'\in W_n[x]$, then 
 \begin{align*}
  d_Z(\gamma_e^x(n),\rpgm(e,y,n)) =  d_Z(\gamma_e^x(n),\rpgm(e,y',n)) \leq D_1
 \end{align*}
 Thus $(\gamma_x^x \mid \gamma_e^y)\geq n$. This contradicts $y\notin W_n[x]$.
 Therefore $y'\notin W_n[x]$, so we have $W_{2T}[y]\cap W_n[x] = \emptyset$.

 Next, we suppose $y\notin \overline{\cbHoro(X_k^{(0)})^{(0)}}$.
 If $s(k;y) = \infty$, then set $T=2n+D_1$.
 it is easy to see that $W_{2T}[y] \cap W_n[x] = \emptyset$.
 If $s(k;y) <\infty$, then set $T = t(k;y)+2n+D_1$. For $y'\in W_{2T}[y]$, we have
 \begin{align*}
  \rpgm(e,y,t) = \rpgm(e,y',t) \qquad (\forall t\in [0,T]).
 \end{align*}
 See \cref{fig:2}.
 Then by a similar argument as before, $y'\notin W_n[x]$.
 Therefore $W_{2T}[y]\cap W_n[x] = \emptyset$.
\begin{figure}[hb]
\begin{tikzpicture}
\draw(-3.3,0.5)node[right]{$X_k$};
\draw(-4,0)--++(7,0)--++(2,2)--++(-7,0)--cycle;
\draw[very thick, dotted](0,1)--++(0,6)node[above]{$y=c_k$};
\draw[ultra thick](0,1)--++(0,5);
\draw[very thick, dashed](0,1)--++(1,-1);
\draw[ultra thick,](1,0)--++(2,-2)node[right]{$e$};
\draw[very thick, shift={(-0.05,0)}, red](0,1)--++(0,4)--++(-1,0)--++(0,-4);
\draw[very thick, dashed, shift={(-0.05,0)}, red](-1,1)--++(0,-1);
\draw[very thick, shift={(-0.05,0)}, red](-1,0)--++(0,-1);
\draw[very thick, dotted, shift={(-0.05,0)}, red](-1,-1)--++(0,-1)node[below]{$y'$};
\draw[very thick, shift={(-0.05,0)}, red](1,0)--++(2,-2);
\fill(0,3)circle(0.1)node[right]{$\gamma_e^y(s(k;y)+h)$};
\fill(0,1)circle(0.1)node[right]{$\gamma_e^y(s(k;y))$};
\fill(3,-2)circle(0.1);
\end{tikzpicture}
\caption{geodesics $\gamma_e^y$ and $\gamma_e^{y'}$ where $y=c_k$}
\label{fig:1}
\end{figure}
\begin{figure}[t]
\begin{tikzpicture}
\draw(-3.3,0.5)node[right]{$X_k$};
\draw(-4,0)--++(7,0)--++(2,2)--++(-7,0)--cycle;
\draw[ultra thick,](0,1)--++(0,4)--++(-1,0)--++(0,-4);
\draw[ultra thick, dashed](-1,1)--++(0,-1);
\draw[very thick](-1,0)--++(0,-2);
\draw[very thick, dashed](0,1)--++(1,-1);
\draw[very thick,](1,0)--++(2,-2)node[right]{$e$};
\draw[very thick, dotted](-1,-2)--++(0,-1)node[below]{$y$};
\draw[very thick, shift={(-0.05,-0.04)}, red](0,1)--++(0,4)--++(-1,0)--++(0,-4);
\draw[very thick, dashed, shift={(-0.05,0)}, red](-1,1)--++(0,-1);
\draw[very thick, shift={(-0.05,0)}, red](-1,0)--++(0,-1.5)--++(-1,0);
\draw[very thick, dotted, shift={(-0.05,0)}, red](-2,-1.5)--++(-1,0)node[left]{$y'$};
\draw[very thick, shift={(-0.05,0)}, red](1,0)--++(2,-2);
\fill(0,1)circle(0.1)node[right]{$\gamma_e^y(s(k;y))$};
\fill(3,-2)circle(0.1);
\end{tikzpicture}
\caption{geodesics $\gamma_e^y$ and $\gamma_e^{y'}$ where $y\neq c_k$}
\label{fig:2}
\end{figure}

\end{proof}

%
%

\begin{lemma}
\label{lem:c_k-isolated}
 Let $k\in K$. The center $c_k$ of the horoball $\Horo(X_k)$ is an
 isolated point if and only if
 $X_k\cap \xi^{-1}(L^\circ)$ is bounded, where $L^\circ$ denotes a set of
 vertices $l\in L$ which adjacent to at least two different vertices in $K$.
\end{lemma}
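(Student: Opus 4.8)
To prove the lemma, the plan is to locate, for each $y\in\partial\aug{Z}$, the geodesic ray $\gamma_e^y:=\rpgm(e,y,-)$ relative to the $\gamma$-convex subspace $\Horo(X_k)$ of $\aug{Z}$. I take the base point $e$ inside $X_k\subset\Horo(X_k)$, which is harmless since the topology of $\partial\aug{Z}$ does not depend on $e$. Two facts will be used throughout. First, $\Horo(X_k)$ is a $\gamma$-convex subspace of $\aug{Z}$, so by \cref{cor:sub-sp-closure} and the remark following \cref{def:bicombing_on_augn} we have $\overline{\Horo(X_k)}\setminus\Horo(X_k)=\{c_k\}$; consequently $y=c_k$ if and only if $\gamma_e^y(t)\in\Horo(X_k)$ for arbitrarily large $t$, because $\gamma_e^y(t)\to y$ in $\overline{\aug{Z}}$. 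Second, from the description of geodesics in \cref{prop:geod}: \emph{(a)} the center ray descends linearly into the combinatorial horoball part of $\Horo(X_k)$, that is, $D(\gamma_e^{c_k}(t))\ge t-c_0$ for some constant $c_0$ and all large $t$; and \emph{(b)} the frontier of $\Horo(X_k)$ in $\aug{Z}$ is exactly the gluing locus $X_k\cap\xi^{-1}(L^\circ)$ (leaf directions of $T$ at $k$ lie in $L\setminus L^\circ$ and attach nothing to $X_k$), so any path in $\aug{Z}$ leaving $\Horo(X_k)$ must pass through $X_k\cap\xi^{-1}(L^\circ)$.

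\emph{Boundedness implies $c_k$ isolated.} Assume $X_k\cap\xi^{-1}(L^\circ)$ is bounded in $X_k$; then it lies in a ball of radius $R$ in $X_k$ about $e$, and since $d_{\aug{Z}}\le d_{X_k}$ on $X_k$ it also lies in $B_R(e)$ inside $\aug{Z}$. I will exhibit a clopen subset of $\partial\aug{Z}$ equal to $\{c_k\}$. Fix a vertex $x'\in\cbHoro(X_k^{(0)})^{(0)}$ of very large depth $h$ (with $x'$ within bounded distance of the center ray), and an integer $t$ with $R<t<h-c_1$ for an appropriate constant $c_1$; then $W_t[x']$ is clopen by \cref{lem:clopen}. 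By coarse convexity (\cref{lem:rpgm_cconvex}) the geodesic $\gamma_e^{x'}$ and the center ray $\gamma_e^{c_k}$ stay boundedly close up to parameter $\approx h$, so, combined with \emph{(a)}, we get $c_k\in W_t[x']$. Conversely, let $y\in W_t[x']$; then $\gamma_e^y(t)$ is within a bounded distance $\Omega$ of $\gamma_e^{x'}(t)$, which has depth roughly $t$ in the horoball part of $\Horo(X_k)$, hence $\gamma_e^y(t)\in\Horo(X_k)$. If $y\ne c_k$, then, since $\Horo(X_k)$ is closed and $y\notin\overline{\Horo(X_k)}$, there is a largest time $T^\ast$ with $\gamma_e^y(T^\ast)\in\Horo(X_k)$, and by \emph{(b)} the point $p:=\gamma_e^y(T^\ast)$ lies in $X_k\cap\xi^{-1}(L^\circ)$; but $\gamma_e^y$ is a geodesic ray from $e$, so $T^\ast=d_{\aug{Z}}(e,p)\le R<t$, contradicting $\gamma_e^y(t)\in\Horo(X_k)$. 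Hence $y=c_k$, so $W_t[x']=\{c_k\}$ is open and $c_k$ is isolated.

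\emph{Unboundedness implies $c_k$ not isolated.} Assume $X_k\cap\xi^{-1}(L^\circ)$ is unbounded in $X_k$. Pick pairwise distinct $p_n\in X_k\cap\xi^{-1}(L^\circ)$ with $d_{X_k}(e,p_n)\to\infty$, write $p_n=\xi^{-1}(l_n)$ with $l_n\in L^\circ$, and choose $k_n'\in K\setminus\{k\}$ adjacent to $l_n$; then $\Horo(X_{k_n'})$ lies in the component of $\aug{Z}$ separated from $\Horo(X_k)$ at $p_n$, so the centers $y_n:=c_{k_n'}\in\partial\aug{Z}$ are pairwise distinct and distinct from $c_k$. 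By \cref{prop:geod}, the ray $\gamma_e^{y_n}$ runs from $e$ down the combinatorial horoball — within bounded distance of the center ray — to depth $m_n\approx\log_2 d_{X_k}(e,p_n)\to\infty$, then makes a horizontal excursion of length at most $5$, descends to $p_n$, and exits $\Horo(X_k)$ towards $y_n$. Thus $\gamma_e^{y_n}$ stays boundedly close to $\gamma_e^{c_k}$ up to parameter $\approx m_n$, and a short chain of the quasi-ultrametric inequalities of \cref{lem:univ-const} together with \cref{lem:gprod-nearpts} gives $(c_k\mid y_n)_e\to\infty$, i.e.\ $y_n\to c_k$ in $\partial\aug{Z}$. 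Hence $c_k$ is not isolated, which completes the equivalence.

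The step I expect to be most delicate is fact \emph{(b)} together with the precise shape of the rays $\gamma_e^{c_k}$, $\gamma_e^{x'}$, and $\gamma_e^{y_n}$ inside $\Horo(X_k)$: one must match the limiting construction of the extended bicombing in \cref{lem:g-extended_bicombing} against the normal geodesics of \cref{prop:geod}, and confirm that in the augmented tree of spaces the only exit from $\Horo(X_k)$ is through $X_k\cap\xi^{-1}(L^\circ)$, so that leaf directions of $T$ at $k$ are irrelevant. Once these structural facts are in place, the Gromov-product estimates reduce to routine applications of \cref{lem:univ-const,lem:gprod-nearpts}.
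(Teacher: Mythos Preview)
Your argument is correct and, for the direction ``unbounded $\Rightarrow$ $c_k$ not isolated,'' it is essentially identical to the paper's: both pick an unbounded sequence $v_n\in X_k\cap\xi^{-1}(L^\circ)$, pass to adjacent components $k_n\in K\setminus\{k\}$, and observe that the centers $c_{k_n}$ accumulate on $c_k$. For the other direction the routes genuinely differ. The paper argues the contrapositive: given a sequence $x_n\to c_k$ in $\partial\aug{Z}\setminus\{c_k\}$, it sets $t(k;x_n):=\sup\{t:\rpgm(e,x_n,t)\in X_k\}$, observes that $t(k;x_n)\to\infty$ because $\gamma_e^{x_n}$ tracks the vertically descending ray $\gamma_e^{c_k}$ for arbitrarily long, and concludes that the exit points $\rpgm(e,x_n,t(k;x_n))\in X_k\cap\xi^{-1}(L^\circ)$ form an unbounded set. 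You instead build an explicit clopen singleton $W_t[x']=\{c_k\}$ by placing $x'$ deep in the combinatorial horoball and invoking the machinery of \cref{lem:clopen}; the exit-through-$L^\circ$ fact then rules out any other boundary point in $W_t[x']$. The paper's contrapositive is shorter and avoids the careful choice of $(x',h,t)$; your approach is more explicit and reuses the clopen basis already developed in \cref{lem:cble-fam,lem:clopen}, which is pleasant but not needed here. Both hinge on the same structural observation---your fact \emph{(b)}---that the frontier of $\Horo(X_k)$ in $\aug{Z}$ is exactly $X_k\cap\xi^{-1}(L^\circ)$.
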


\begin{proof}
 For $p\in \partial \aug{Z}$, set 
 \begin{align*}
  t(k;p):= \sup\{t\in \R: \rpgm(e,p,t)\in X_k\}.
 \end{align*}

 First, we suppose that $c_k$ is an accumulation point. Thus there exists a sequence 
 $(x_n)$ in $\partial \aug{Z} \setminus \{c_k\}$ converging to $c_k$. Then we see that
 $t(k;x_n)\to \infty$. We remark that $\rpgm(e,x_n,t(k;x_n))\in X_k\cap \xi^{-1}(L^\circ)$. 
 Therefore $X_k\cap \xi^{-1}(L^\circ)$ is unbounded.

 Next, we suppose that $X_k\cap \xi^{-1}(L^\circ)$ is unbounded. Then there exists an unbounded
 sequence $(v_n)$ in $X_k\cap \xi^{-1}(L^\circ)$. By the definition of $L^\circ$, for each $v_n$, 
 there exists $k_n\in K\setminus \{k\}$ which adjacent to $\xi(v_n)$. Then we see that
 $c_{k_n}$ converges to $c_k$.
 See \cref{fig:3}\begin{figure}[b]
\begin{tikzpicture}
\draw(-7.3,0.5)node[right]{$X_k$};
\draw(-8,0)--++(10,0)--++(2,2)--++(-10,0)--cycle;
\draw[very thick, dotted](0,1)--++(0,6)node[above]{$c_k$};
\draw[ultra thick](0,1)--++(0,5);
\draw[very thick, dashed](0,1)--++(1,-1);
\draw[ultra thick,](1,0)--++(2,-2)node[right]{$e$};
\draw[very thick, shift={(-0.05,0)}, red](1,0)--++(2,-2);
\draw[very thick, shift={(-0.05,-0.04)}, red](0,1)--++(0,2)--++(-1,0)--++(0,-2)node[left]{$v_1$};
\fill[shift={(-0.05,-0.04)},red](-1,1)circle(0.1);
\draw[very thick, dotted, shift={(-0.05,-0.04)}, red](-1,-2)--++(0,-1)node[below]{$c_{k_1}$};
\draw[very thick, dashed, shift={(-0.05,0)}, red](-1,1)--++(0,-1);
\draw[very thick, shift={(-0.05,0)}, red](-1,0)--++(0,-2);
\draw[very thick, shift={(-0.05,-0.04)}, red](0,1)--++(0,3)--++(-2,0)--++(0,-3)node[left]{$v_2$};
\fill[shift={(-0.05,-0.04)},red](-2,1)circle(0.1);
\draw[very thick, dotted, shift={(-0.05,-0.04)}, red](-2,-2)--++(0,-1)node[below]{$c_{k_2}$};
\draw[very thick, dashed, shift={(-0.05,0)}, red](-2,1)--++(0,-1);
\draw[very thick, shift={(-0.05,0)}, red](-2,0)--++(0,-2);
\draw[very thick, shift={(-0.05,-0.04)}, red](0,1)--++(0,4)--++(-3,0)--++(0,-4)node[left]{$v_3$};
\fill[shift={(-0.05,-0.04)},red](-3,1)circle(0.1);
\draw[very thick, dotted, shift={(-0.05,-0.04)}, red](-3,-2)--++(0,-1)node[below]{$c_{k_3}$};
\draw[very thick, dashed, shift={(-0.05,0)}, red](-3,1)--++(0,-1);
\draw[very thick, shift={(-0.05,0)}, red](-3,0)--++(0,-2);
\draw[very thick, dashed, red](-4,-2)--++(-1,0);

\fill(3,-2)circle(0.1);
\end{tikzpicture}
\caption{$c_{k_n}$ converges to $c_k$}
\label{fig:3}
\end{figure}

\end{proof}


\begin{theorem}
\label{thm:Cantor}
 Let $Z$ be a tree of geodesic coarsely convex spaces, 
 and let $\aug{Z}$ be the augmented space of $Z$.
 \begin{enumerate}[label=(\arabic*)]
   \item \label{item:isolate_points}
          Let $\partial \aug{Z}_{\mathsf{iso}}$ 
          denote the set of isolated points in $\partial \aug{Z}$.
        \begin{align*}
         \partial \aug{Z}_{\mathsf{iso}}
         =  \{c_k:k\in K,\ X_k\cap \xi^{-1}(L^\circ)\text{ is bounded }\}
        \end{align*}
  \item \label{item:no-isopoints-Cantor}
        If $\partial \aug{Z}$ has no isolated point, then $\partial \aug{Z}$ 
        is the Cantor space.
 \end{enumerate}
\end{theorem}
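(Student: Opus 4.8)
The plan is to prove \ref{item:isolate_points} first and then derive \ref{item:no-isopoints-Cantor} from the standard topological characterization of the Cantor space. For \ref{item:isolate_points}: the center $c_k$ of the horoball $\Horo(X_k)$ is, by \cref{lem:c_k-isolated}, an isolated point if and only if $X_k\cap \xi^{-1}(L^\circ)$ is bounded. What remains is to check that \emph{every} isolated point of $\partial \aug{Z}$ is of the form $c_k$ for some $k\in K$. The idea is that any $x\in \partial \aug{Z}$ which is not a center of a horoball must be an accumulation point. I would argue as follows: fix $x\in \partial \aug{Z}$ with $x\neq c_k$ for all $k$, and choose $t_0\in \Rp$ large. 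The geodesic ray $\gamma_e^x(-)$ must eventually leave every bounded set, and since $x$ is not a horoball center, $\gamma_e^x(t)$ lies in some $X_k$ for arbitrarily large $t$ (it cannot be absorbed into a single horoball, as that would force $x=c_k$). Pick such a large $t$ with $\gamma_e^x(t)\in X_k$, and let $L'$ be a vertex of $T$ in the star of $k$ separating $\gamma_e^x(t)$ from $e$; since $Z$ is proper and unbounded, there are infinitely many edges emanating into other components at points near $\gamma_e^x(t)$, so we can construct a sequence of distinct boundary points $x_n$ (either of the form $c_{k_n}$ for horoball centers $k_n$ branching off near $\gamma_e^x(t_n)$ with $t_n\to\infty$, or points of $\partial X_{k_n}$) such that $(\gamma_e^x\mid \gamma_e^{x_n})_e\to\infty$, using \cref{lem:gprod-nearpts} and \cref{lem:univ-const} \ref{lem:qultmD} to estimate Gromov products exactly as in the proof of \cref{lem:cble-fam}. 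Hence $x_n\to x$, so $x$ is not isolated. Combined with \cref{lem:c_k-isolated} this yields the displayed equality.

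For \ref{item:no-isopoints-Cantor}: recall that a topological space is homeomorphic to the Cantor space if and only if it is nonempty, compact, metrizable, totally disconnected, and has no isolated point (Brouwer's characterization). The space $\partial \aug{Z}$ is metrizable and compact by \cref{lem:Gromov-cptn} (applied to the proper space $\aug{Z}$ equipped with the augmented geodesic coarsely convex bicombing, which is proper since each combinatorial horoball based on a locally finite lattice is proper). By \cref{lem:cble-fam} together with \cref{lem:clopen}, $\partial \aug{Z}$ has a countable basis consisting of clopen sets, which immediately gives total disconnectedness: any two distinct points are separated by a clopen set from $\mathcal{T}$, so connected components are singletons. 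By hypothesis there are no isolated points. It remains only to observe that $\partial \aug{Z}$ is nonempty; this follows because $Z$ is unbounded (indeed each horoball $\Horo(X_k)$ contributes its center $c_k$, and under the no-isolated-point hypothesis there are in fact infinitely many points). Thus all five conditions of Brouwer's theorem hold and $\partial \aug{Z}$ is the Cantor space.

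The main obstacle I anticipate is the first part: carefully showing that a non-center boundary point is never isolated. The subtlety is that the geodesic ray $\gamma_e^x$ could, a priori, travel along a single component $X_k$ to a point of $\partial X_k$ without ever branching, and one must use that $\partial X_k$ itself might have isolated points even though this does not make the point isolated in $\partial \aug{Z}$ --- here is where the augmentation is essential, because in $\aug{Z}$ every component $X_k$ (with $k>0$, say, after the bijection) has been replaced by a horoball, so the only components surviving as genuine coarsely convex pieces are gone, and the ray $\gamma_e^x$ with $x$ a non-center boundary point must pass through infinitely many branch vertices in $L^\circ$ at unbounded times. I would make this precise by analyzing the structure of geodesic rays in the augmented tree of spaces: a geodesic ray either enters a horoball and converges to its center (giving $x=c_k$), or it crosses infinitely many edges of $T$, in which case near each crossing one finds a distinct horoball center close to the ray, producing the required convergent sequence. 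The bookkeeping of the Gromov product estimates along such a ray, mirroring \cref{lem:cble-fam}, is routine but is the technical heart of the argument.
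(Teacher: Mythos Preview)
Your proposal is correct and follows essentially the same route as the paper. For \ref{item:isolate_points} the paper also invokes \cref{lem:c_k-isolated} for the centers and then, for any $x\in\partial\aug{Z}$ that is not a horoball center, observes that the ray $\gamma_e^x$ must visit infinitely many distinct components $X_{k_n}$, whence the sequence of centers $(c_{k_n})$ converges to $x$; for \ref{item:no-isopoints-Cantor} the paper likewise combines \cref{lem:cble-fam} and \cref{lem:clopen} with Brouwer's characterization. Your write-up is more detailed (supplying the Gromov-product estimates and the compactness/metrizability checks that the paper leaves implicit), and your self-correction in the final paragraph---that in $\aug{Z}$ \emph{every} component has been replaced by a horoball, so the worry about isolated points of some $\partial X_k$ never arises---is exactly the right observation; the earlier digression about branching at a single time $t$ is unnecessary once you note this.
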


\begin{proof}
 By \cref{lem:c_k-isolated}, 
 \begin{align*}
   \partial \aug{Z}_{\mathsf{iso}}\cap \{c_k:k\in K\} =
  \{c_k:k\in K,\ X_k\cap \xi^{-1}(L)\text{ is bounded }\}.
 \end{align*} 
 Let $x\in  \partial \aug{Z}_{\mathsf{iso}}\setminus \{c_k:k\in K\}$.
 Then there exists a sequence $\{t_n\}\subset  \Rp$ and $\{k_n\}\subset K$ such that 
 $\rpgm(e,x,t_n)\in X_{k_n}$ for all $n\in \N$, and $k_n\neq k_m$ for all $n\neq m$.
 Then a sequence $\{c_k\}$ converges to $x$. Thus \ref{item:isolate_points} holds.

 By~\cref{lem:cble-fam,lem:clopen}, $\partial \aug{Z}$ has  
 countable bases consisting of clopen sets. Thus, if $\partial \aug{Z}$ has no isolated points,
 $\partial \aug{Z}$ is homeomorphic to the Cantor space by~\cite{Brouwer-Cantorsp}.
\end{proof}


\begin{remark}
 It is known by {\cite[Lemma 4.6(2)]{bigdely2023relative}} that
 the augmented space $\aug{Z}$ is a Gromov hyperbolic space.
 The ideal boundary of $\aug{Z}$ constructed by the geodesic bicombing
 coincides with the Gromov boundary of $\aug{Z}$ as a 
 Gromov hyperbolic space.
\end{remark}

\section{Topological dimension of ideal boundaries}
\label{sec:topol-dimens-ideal}
In this section, we study the topological dimensions of ideal boundaries
of trees of spaces. Then we compare the results with 
a well-known formula for the cohomological
dimension of free products of groups.
\subsection{Topological dimension}
\label{sec:main-result-tdim}
\begin{theorem}
 \label{thm:dim_delZ}
 Let $(Z,d_Z)$ be a $(\xi,K,L,\{\gamma_k\}_{k\in K})$-tree of geodesic coarsely convex space. 
 Suppose that $Z$ is proper.
 For $k\in K$, set $X_k:=\xi^{-1}(\starNb (k))$.
 Then,
 \begin{align*}
  \dim(\partial Z) = \sup\{\dim (\partial X_k):k\in K\}.
 \end{align*}
\end{theorem}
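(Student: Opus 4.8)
The plan is to sandwich $\dim(\partial Z)$ between $\sup_{k\in K}\dim\partial X_{k}$ from below and the same quantity from above, working along the tower of augmented spaces and invoking standard dimension theory of compact metrizable spaces. For the lower bound, recall from \cref{rem:dX_k-emb} (which rests on \cref{cor:sub-sp-closure}) that for each $k\in K$ the $\gamma$-convex subspace $X_{k}$ has ideal boundary $\partial X_{k}$ homeomorphic to a subspace of $\partial Z$. Since $Z$ is proper, $\partial Z$ is compact metrizable by \cref{lem:Gromov-cptn}, and covering dimension is monotone for subspaces of separable metrizable spaces, so $\dim\partial X_{k}\le\dim\partial Z$ for every $k$, hence $\sup_{k}\dim\partial X_{k}\le\dim\partial Z$.

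For the upper bound I would argue inductively along the projective system $\partial Z=\varprojlim_{n}\partial\augn{Z}{n}$ with bonding maps $\partial\iota_{n}$, as recalled at the start of \cref{sec:cohomology-ideal-boundary-tree}. The base term $\partial\augn{Z}{0}=\partial\aug{Z}$ is a compact metrizable space with a countable basis of clopen sets by \cref{lem:cble-fam,lem:clopen}, so $\dim\partial\aug{Z}\le 0$ (by \cref{thm:Cantor} it is the Cantor space, or a degeneration of it). For the inductive step I claim
\[
 \dim\partial\augn{Z}{n}\le\max\{\dim\partial\augn{Z}{n-1},\ \dim\partial X_{k(n)}\}.
\]
Here $\partial X_{k(n)}$ is a closed subspace of $\partial\augn{Z}{n}$, and by the identification of $\partial\augn{Z}{n-1}$ with the one-point compactification of $\partial\augn{Z}{n}\setminus\partial X_{k(n)}$ used in \cref{sec:cohomology}, the open set $\partial\augn{Z}{n}\setminus\partial X_{k(n)}$ embeds in $\partial\augn{Z}{n-1}$ as the complement of the point $c_{k(n)}$, hence has dimension at most $\dim\partial\augn{Z}{n-1}$. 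An open subset of a metrizable space is a countable union of sets closed in the whole space, so $\partial\augn{Z}{n}$ admits a countable closed cover by sets of dimension at most $\max\{\dim\partial\augn{Z}{n-1},\dim\partial X_{k(n)}\}$, and the claim follows from the countable sum theorem for covering dimension. Iterating from the base term gives $\dim\partial\augn{Z}{n}\le\max\{0,\sup_{k}\dim\partial X_{k}\}$ for all $n$; and since $\partial Z$ is the inverse limit of the compact metrizable spaces $\partial\augn{Z}{n}$, the inverse-limit estimate for covering dimension yields $\dim\partial Z\le\max\{0,\sup_{k}\dim\partial X_{k}\}$. Combined with the lower bound this gives $\dim\partial Z=\sup_{k}\dim\partial X_{k}$ whenever at least one component is unbounded, which is the relevant case; the degenerate situation in which every $X_{k}$ is bounded mirrors the role of the trivial group in the free-product formula for cohomological dimension discussed in \cref{sec:relat-betw-cohom}.

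A more direct route to the upper bound is to observe that the projection $\varprojlim_{n}\partial\augn{Z}{n}\to\partial\aug{Z}$ is a closed continuous map of compact metrizable spaces whose only non-degenerate fibres are the copies of the $\partial X_{k}$ lying over the horoball centres $c_{k}$, and then to apply Hurewicz's theorem on dimension-raising maps to the zero-dimensional base $\partial\aug{Z}$. Either way, I expect the main obstacle to be bookkeeping rather than a single hard inequality: one must pin down the bonding maps $\partial\iota_{n}$ precisely enough to know that $\partial\augn{Z}{n}$ is obtained from $\partial\augn{Z}{n-1}$ by replacing the single point $c_{k(n)}$ with $\partial X_{k(n)}$, and the one genuinely non-routine point is that this operation raises the dimension only to the maximum of the two pieces rather than to their sum, which is exactly where the countable sum theorem enters through the open complement $\partial\augn{Z}{n}\setminus\partial X_{k(n)}$ being a countable union of closed sets; the remaining ingredients (monotonicity, the countable sum theorem, the inverse-limit theorem) then just have to be applied with the correct separable-metrizable / compact-Hausdorff hypotheses in place.
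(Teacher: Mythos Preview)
Your overall scaffolding coincides with the paper's: the lower bound is monotonicity via \cref{rem:dX_k-emb}, and the upper bound is the inverse-limit estimate for covering dimension applied to $\partial Z=\varprojlim_n\partial\augn{Z}{n}$, combined with a level-by-level bound on $\dim\partial\augn{Z}{n}$. The difference lies in how that level bound is obtained. The paper proves it in one stroke as \cref{lem:upper_estimate}: given a finite open cover of $\partial\augn{Z}{n}$, the pieces meeting some $\partial X_{k(i)}$ ($i\le n$) are refined using $\dim\partial X_{k(i)}\le N$, these refined families are shown to be clopen by an argument parallel to \cref{lem:clopen}, and the leftover closed set $Z'$ is shown to carry a clopen basis and hence be zero-dimensional. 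Your route is instead inductive, comparing $\partial\augn{Z}{n}$ with $\partial\augn{Z}{n-1}$ and invoking the countable sum theorem. This is cleaner and avoids the hands-on cover manipulation, but it leans on the identification of $\partial\augn{Z}{n}\setminus\partial X_{k(n)}$ with $\partial\augn{Z}{n-1}\setminus\{c_{k(n)}\}$ as \emph{topological spaces}; \cref{sec:cohomology} only uses this at the level of (co)homology through strong excision, so you owe one extra sentence verifying that $\partial\iota_n$ restricts to a homeomorphism off $\partial X_{k(n)}$. Both arguments share the same harmless degenerate case when every $X_k$ is bounded.

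Your Hurewicz alternative is appealing but needs more care than you indicate: the projection $\partial Z\to\partial\aug{Z}$ need not be surjective (if some $X_k$ is bounded then $c_k$ is not in the image), and identifying the fibre over each $c_k$ with exactly $\partial X_k$ requires an argument, not just the assertion. Once those points are checked the Hurewicz bound does give the same inequality, but the inductive sum-theorem route is the one that goes through with least friction.
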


\cref{thm:dim_free-pd} is obtained by applying \cref{thm:dim_delZ}
to free products of metric spaces.

Let $(Z,d_Z)$ be a $(\xi,K,L,\{\gamma_k\}_{k\in K})$-tree 
of geodesic coarsely convex space.
We fix a bijection $k\colon \N\to K$.
Let $\aug{Z}$ be the augmented space of $Z$.

For simplicity, we denote by 
$\gamma$ an augmented bicombing on $\augn{Z}{n}$
associated with the family $\{\gamma_k\}_{k\in K}$.

\begin{lemma}
 \label{lem:upper_estimate}
 Let $\augn{Z}{n}$ be the $n$-th augmented space as above.
 \begin{align*} 
  \dim \partial \augn{Z}{n}\leq \max\{\dim \partial X_{k(i)}:
  1\leq i\leq n\}.
 \end{align*}
\end{lemma}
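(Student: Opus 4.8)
The plan is to prove the inequality by induction on $n$, using the recursive structure of the $n$-th augmented space. The base case $n=0$ is the statement $\dim \partial \aug{Z} = 0$, which follows from \cref{thm:Cantor}: the ideal boundary of the augmented space is either the Cantor space or has isolated points, and in either case it is totally disconnected (its topology has a countable basis of clopen sets by \cref{lem:cble-fam,lem:clopen}), hence zero-dimensional. So $\dim \partial \augn{Z}{0} \leq 0 = \max \emptyset$ under the convention that the maximum over an empty index set is $0$ (or, if one prefers, we start the induction at $n=1$, where the right-hand side is $\dim \partial X_{k(1)} \geq 0$ and the argument below applies verbatim).

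For the inductive step, suppose the bound holds for $\augn{Z}{n-1}$. I would split $\partial \augn{Z}{n}$ using the retraction $s_{k(n)}\colon \partial \augn{Z}{n} \to \partial X_{k(n)}$ from \cref{prop:retractionAn}, exactly as in \cref{sec:cohomology}. Concretely, $\partial X_{k(n)}$ is a closed subspace of $\partial \augn{Z}{n}$ (by \cref{rem:dX_k-emb}, since $X_{k(n)}$ is $\augn{\gamma}{n}$-convex), and the complement $\partial \augn{Z}{n} \setminus \partial X_{k(n)}$ is an open subspace whose one-point compactification is homeomorphic to $\partial \augn{Z}{n-1}$ via the strong excision identification used in \cref{sec:cohomology}. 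Topological (covering) dimension does not increase under passing to subspaces, and for separable metric spaces it satisfies the countable sum theorem: if a space is the union of a closed set $F$ and a countable family of closed sets each of dimension $\leq m$, then the whole space has dimension $\leq m$. I would therefore write $\partial \augn{Z}{n} = \partial X_{k(n)} \cup (\partial \augn{Z}{n} \setminus \partial X_{k(n)})$, note that $\dim \partial X_{k(n)} \leq \max\{\dim \partial X_{k(i)} : 1\leq i \leq n\}$, and that the open piece, being a subspace of its one-point compactification $\partial \augn{Z}{n-1}$, has dimension at most $\dim \partial \augn{Z}{n-1} \leq \max\{\dim \partial X_{k(i)} : 1\leq i \leq n-1\}$ by the inductive hypothesis. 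Combining, $\dim \partial \augn{Z}{n} \leq \max\{\dim \partial X_{k(i)} : 1 \leq i \leq n\}$.

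The one subtle point—and the step I expect to need the most care—is the dimension-theoretic patching: covering dimension of the union of a closed subspace with a single open subspace. The clean way to handle this is to invoke the countable closed sum theorem together with the fact that the open complement $U = \partial \augn{Z}{n}\setminus \partial X_{k(n)}$, being a locally compact separable metric space, can be exhausted by a countable increasing union of compact sets, each of which is closed in $\partial \augn{Z}{n}$ and sits inside $U \subset \partial \augn{Z}{n-1}$; each such compact piece has dimension $\leq \dim \partial \augn{Z}{n-1}$. Then $\partial \augn{Z}{n}$ is the countable union of these compacta together with the closed set $\partial X_{k(n)}$, and the sum theorem gives the bound. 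All the point-set inputs (monotonicity of $\dim$ on subspaces, the countable sum theorem, $\sigma$-compactness of locally compact separable metric spaces) are standard; the geometric content—that the complement compactifies to $\partial \augn{Z}{n-1}$—is already established in \cref{sec:cohomology}, so the proof is essentially an assembly of these facts.
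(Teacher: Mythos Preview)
Your inductive approach is correct and genuinely different from the paper's argument. The paper proceeds directly: given a finite open cover of $\partial \augn{Z}{n}$, it refines separately on neighbourhoods of each $\partial X_{k(i)}$ (using $\dim \partial X_{k(i)} \leq N$), shows that the closed complement $Z'$ of these neighbourhoods is zero-dimensional by producing a clopen basis there (essentially rerunning the arguments behind \cref{lem:cble-fam,lem:clopen} for $Z'$), and then assembles a global refinement of order $N+1$. Your route instead inducts on $n$, identifies the open complement of $\partial X_{k(n)}$ with a subspace of $\partial \augn{Z}{n-1}$, exhausts it by compacta, and invokes the countable closed sum theorem. This is shorter and more conceptual: the zero-dimensionality work is done once, at the base case, and thereafter only standard dimension-theory black boxes are used. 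The paper's version is more hands-on and self-contained; yours makes transparent why the bound propagates along the tower $\partial \augn{Z}{0} \leftarrow \partial \augn{Z}{1} \leftarrow \cdots$.

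One small point of care: you cite ``the strong excision identification used in \cref{sec:cohomology}'' for the homeomorphism $(\partial \augn{Z}{n}\setminus \partial X_{k(n)})^{+} \cong \partial \augn{Z}{n-1}$, but strong excision is a cohomology axiom, not a topological statement. The paper also uses this identification without explicit proof. The honest justification is that $\partial \iota_n\colon \partial \augn{Z}{n} \to \partial \augn{Z}{n-1}$ is a continuous surjection from a compact space to a Hausdorff space whose only non-trivial fibre is $\partial \iota_n^{-1}(c_{k(n)}) = \partial X_{k(n)}$; hence it exhibits $\partial \augn{Z}{n-1}$ as the quotient $\partial \augn{Z}{n}/\partial X_{k(n)}$, which is precisely the one-point compactification of the complement. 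This is routine but worth saying, since your argument leans on the homeomorphism itself rather than merely on its cohomological shadow.
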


\begin{proof}
 We suppose that $N:= \max\{\dim \partial X_{k(i)}:1\leq i\leq n\}<\infty$.
 Let $\mathcal{U}=\{U_\alpha\}_{\alpha \in \Lambda}$ 
 be a finite open cover of $\partial \augn{Z}{n}$. We can suppose
 without loss of generality that $\mathcal{U}$ consists of 
 fundamental neighbourhoods, that is, for $\alpha\in \Lambda$,
 there exists $(x_\alpha,n_\alpha)\in \augn{Z}{n}\cup \N$ such that
 \begin{align*}
  U_\alpha=V_{n_\alpha}[x_\alpha] \cap \partial \augn{Z}{n}
  =\{y\in \partial \augn{Z}{n}: (x\mid y)>n_\alpha\}.
 \end{align*}
 Furthermore, since $\augn{Z}{n}$ is normal, and each $\partial X_{k(i)}$
 is closed,
 we can assume that for each $\alpha\in \Lambda$,
 there exists at most one $i\in \{1,\dots,n\}$ such that
 $U_\alpha\cap \partial X_{k(i)} \neq \emptyset$.
 Set 
 \begin{align*}
  \Lambda_i&:=\{\alpha:U_\alpha\cap \partial X_{k(i)} \neq \emptyset\}\\
  W_i&:=\bigcup_{\alpha\in \Lambda_i}U_\alpha.
 \end{align*}
 We can assume that for $i\neq i'$ and for $\alpha\in \Lambda_i$ 
 and $\alpha'\in \Lambda_{i'}$, we have 
 \begin{align}
  \label{eq:U-disjoint}
  \overline{U_\alpha} \cap   \overline{U_{\alpha'}} = \emptyset.
 \end{align}
 By taking a common refinement of the cover 
 $\{U_\alpha:\alpha\in \Lambda_i\}$ and 
 $\{V_n[x]:(x,n)\in X_{k(n)}\times \N\}$, we can assume that
 for $\alpha\in \Lambda_i$, we have 
 $(x_\alpha,n_\alpha)\in X_{k(n)}\times \N$.


 Since $\dim \partial X_{k(i)}\leq N$, 
 the cover $\{U_\alpha:\alpha\in \Lambda_i\}$ has 
 a refinement $\mathcal{V}_i$ of order $N+1$.

 Now set 
 \begin{align*}
  &F_i:= \bigcup_{\alpha\in \Lambda_i}U_\alpha 
  = \bigcup_{V\in \mathcal{V}_i}V,
  &Z':= \augn{Z}{n}\setminus \left(\bigcup_{1\leq i\leq n}F_i\right)
 \end{align*}
 Since $F_i$'s are open, $Z'$ is closed, 
 so $Z'$ is a compact metrizable space.
 
 \begin{claim}
  For each $i=1,\dots,n$, $F_i$ is closed. Thus $Z'$ is open.
 \end{claim}
 We will show the claim by a similar argument as in the proof of
 \cref{lem:clopen}.
 
 Let $x\in \partial \augn{Z}{n}\setminus F_i$. We will show
 that there exists $T_x>0$ such that $V_{T_x}[x]\cap F_i = \emptyset$.
 Since $\Lambda_i$ is a finite set, it is enough to show that
 for each $\alpha\in \Lambda_i$, there exists $T_{x,\alpha}>0$
 such that $V_{2T_{x,\alpha}}[x]\cap U_\alpha = \emptyset$.
 We remark that $U_\alpha=V_{n_\alpha}[x_\alpha]$ with 
 $(x_\alpha,n_\alpha)\in X_k\times \N$.

 For $k=k(i)\in K$ with $i\geq n$, let $c_k=c_{k(i)}$ be the center
 of the horoball $\Horo(X_k)$. First, we suppose that $x=c_k$ 
 for some $k\in K$. Set 
 \begin{align*}
  s(k;x):=\inf\{t\in \R:\rpgm(e,x,t)\in X_k^{(0)}\}.
 \end{align*}
 The geodesic $\rpgm(\gamma_e^x(s(k;x)),x,-)$ consists of the 
 vertical line. Set $T_{x,\alpha}:=s(k;x)+4D_1+n_{\alpha}$. 
 Then for $y\in V_{2T_{x,\alpha}}[x]$, we have
 \begin{align*}
  \rpgm(e,x,t) = \rpgm(e,y,t) \quad (\forall t\in [0,T_{x,\alpha}])
 \end{align*}
 If $y\in U_\alpha = V_{n_\alpha}[x_\alpha]$, then
 \begin{align*}
  d_Z(\rpgm(e,x_\alpha,n_\alpha),\rpgm(e,x,n_\alpha)) 
  =  d_Z(\rpgm(e,x_\alpha,n_\alpha),\rpgm(e,y,n_\alpha)) \leq D_1
 \end{align*}
 Thus $(x\mid x_\alpha)_e\geq n_\alpha$. This contradicts 
 $x\notin U_\alpha$.
 Therefore $y\notin U_\alpha$, 
 so we have 
 \begin{align*}
  V_{2T_{x,\alpha}}[x]\cap U_\alpha = \emptyset.
 \end{align*}

 Next, we suppose $x\neq c_k$ for all $k\in K$. Then there exists
 $k\in K$ such that 
 \begin{align*}
  s(k;x):=\inf\{t\in \R:\rpgm(e,x,t)\in X_k\}
 \end{align*}
 is finite, and $d_Z(\rpgm(e,x,s(k;x)),x_\alpha)> 2n_\alpha$.
 Set $T_{x,\alpha}=2(s(k;x)+n_{\alpha}+D_1)$. Then for all 
 $y\in V_{T_{x,\alpha}}[x]$, we have
 \begin{align*}
  \rpgm(e,x,t) = \rpgm(e,y,t) \quad (\forall t\in [0,n_{x,\alpha}])
 \end{align*}
 By the same argument as before, $y\notin U_\alpha$,
 so we have $V_{2T_{x,\alpha}}[x]\cap U_\alpha = \emptyset$.
 This completes a proof of the claim.

 Now by a similar argument as in \cref{sec:ideal-bound-augm},
 we see that $Z'$ has countable bases consisting of clopen sets.
 Thus $Z'$ is zero dimensional. So the open cover
 \begin{align*}
  \{U_\alpha \cap Z': \alpha \notin \Lambda_i, \, 1\leq i\leq n\}
 \end{align*}
 has a refinement $\mathcal{V}_\infty$ of order $1$.
 It follow that $\mathcal{U}$ has a refinement
 $\mathcal{V}\cup \bigcup_{i}\mathcal{V}_i $
 of order $N+1$.
 Thus $\dim \partial \augn{Z}{n} \leq N$.
\end{proof}

\begin{proof}[Proof of \cref{thm:dim_delZ}]
  Since each $\partial X_k$ is a closed subset, we have
 \begin{align*}
  \dim(\partial Z) \geq  \sup\{\dim (\partial X_k):k\in K\}.
 \end{align*}
 
 By a formula for the topological dimensions of projective 
 limits in \cite{MR0643755}, we have
 \begin{align*}
  \dim(\partial Z)\leq \sup_n \dim \partial \augn{Z}{n}.
 \end{align*}
 By~\cref{lem:upper_estimate}, we have
 \begin{align*}
  \sup_n \dim \partial \augn{Z}{n} \leq 
  \sup_n \dim \partial X_{k(n)}.
 \end{align*}
 This completes the proof.
\end{proof}

\subsection{Comparison with the formula for cohomological dimensions of groups}
\label{sec:relat-betw-cohom}
We remark that \cref{thm:dim_free-pd} is an analogue of 
a well-known formula for the cohomological dimensions of free products.
For a group $G$, we denote by $\cd G$ the cohomological dimension 
of $G$.
By \cite[VIII (2.4) Proposition, (a) and (c)]{Brown-cohom-gr}
we have the following.
\begin{theorem}[\cite{Brown-cohom-gr}]
\label{thm:cd-freeprod-grp}
 Let $G$ and $H$ be groups. Let $G*H$ be the free product of $G$ and $H$.
 Then we have 
 \begin{align*}
  \cd (G*H) = \max\{\cd G, \cd H\}.
 \end{align*}
\end{theorem}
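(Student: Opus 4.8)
The plan is to deduce the formula from the behaviour of group cohomology under free products, following Brown's treatment \cite{Brown-cohom-gr} and using the Bass--Serre tree of $G*H$. Throughout, $\cd\Gamma$ is the largest integer $n$ for which $H^n(\Gamma;M)=\Ext^n_{\Z\Gamma}(\Z,M)$ is nonzero for some $\Z\Gamma$-module $M$.

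For the inequality $\cd(G*H)\geq\max\{\cd G,\cd H\}$ I would use that $G$ and $H$ are subgroups of $G*H$ and that $\cd$ is monotone under passage to subgroups: a projective $\Z[G*H]$-resolution of $\Z$ restricts to a projective resolution over the subring of any subgroup, since $\Z[G*H]$ is free as a module over that subring. Equivalently, the retractions $G*H\to G$ and $G*H\to H$ realise $H^\ast(G;-)$ and $H^\ast(H;-)$ as direct summands of $H^\ast(G*H;-)$, so a nonvanishing $H^n$ of $G$ or of $H$ forces a nonvanishing $H^n$ of $G*H$.

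For the reverse inequality I would exploit the action of $\Gamma:=G*H$ on its Bass--Serre tree $T$, whose quotient is a single edge with trivial edge stabiliser and with vertex stabilisers the conjugates of $G$ and of $H$. Since $T$ is a contractible $1$-dimensional complex, its augmented cellular chain complex becomes a short exact sequence of $\Z\Gamma$-modules
\begin{align*}
 0 \longrightarrow \Z\Gamma \longrightarrow \Z[\Gamma/G]\oplus\Z[\Gamma/H] \longrightarrow \Z \longrightarrow 0.
\end{align*}
Applying $\Ext^\ast_{\Z\Gamma}(-,M)$, rewriting $\Ext^n_{\Z\Gamma}(\Z[\Gamma/G],M)\cong H^n(G;M)$ and $\Ext^n_{\Z\Gamma}(\Z[\Gamma/H],M)\cong H^n(H;M)$ by Shapiro's Lemma, and using that $\Ext^n_{\Z\Gamma}(\Z\Gamma,M)=0$ for $n\geq 1$, the long exact sequence should collapse, for every coefficient module $M$, to isomorphisms
\begin{align*}
 H^n(G*H;M)\cong H^n(G;M)\oplus H^n(H;M)\qquad(n\geq 2).
\end{align*}
Hence $H^n(G*H;M)=0$ as soon as $n\geq 2$ and $n>\max\{\cd G,\cd H\}$, which gives $\cd(G*H)\leq\max\{\cd G,\cd H\}$ whenever the right-hand side is at least $2$.

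The remaining low-dimensional cases I would handle by hand: if $\max\{\cd G,\cd H\}=0$ then $G$ and $H$ are trivial, so $G*H$ is trivial and the identity is clear; if $\max\{\cd G,\cd H\}=1$ then, by the Stallings--Swan theorem that a group has cohomological dimension at most one exactly when it is free, both $G$ and $H$ are free, hence so is $G*H$, and $\cd(G*H)\leq 1$; combined with the lower bound this yields equality. The step I expect to demand the most care is pinning down the short exact sequence coming from $T$ and keeping track of degrees in the resulting $\Ext$ long exact sequence; the only genuinely deep input is the Stallings--Swan characterisation invoked in the $1$-dimensional case.
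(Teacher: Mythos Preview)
The paper does not prove this theorem at all: it is quoted as a known result, with the single line ``By \cite[VIII (2.4) Proposition, (a) and (c)]{Brown-cohom-gr} we have the following.'' So there is no in-paper argument to compare against; your proof simply supplies what the paper outsources to Brown. Your route---monotonicity of $\cd$ under subgroups for the lower bound, and the Mayer--Vietoris sequence coming from the cellular chains of the Bass--Serre tree for the upper bound---is exactly the standard argument behind Brown's VIII.2.4(a),(c), so in spirit you and the paper are citing the same proof.

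Your argument is correct, but the appeal to Stallings--Swan in the case $\max\{\cd G,\cd H\}=1$ is unnecessary and much deeper than what is needed. Your own isomorphism
\[
 H^n(G*H;M)\;\cong\;H^n(G;M)\oplus H^n(H;M)\qquad(n\geq 2)
\]
already shows that $H^n(G*H;M)=0$ for all $n\geq 2$ whenever $\max\{\cd G,\cd H\}\leq 1$, hence $\cd(G*H)\leq 1$; combined with the subgroup lower bound this gives equality when the maximum equals $1$. So the ``only genuinely deep input'' you flag is in fact avoidable, and the whole formula follows from the elementary tree argument plus the trivial $\max=0$ case.
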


The following theorem relates the cohomological dimensions of 
group $G$ acting coarsely convex space $X$ and the topological
dimension of the ideal boundary $\partial X$.

\begin{theorem}[{\cite[Corollary 8.10]{FO-CCH}}]
\label{cor:cohom-dim-FO}
 Let $G$ be a group acting geometrically on a proper coarsely convex space $X$.
 If $G$ admits a finite model for the classifying space $BG$, then
 \begin{align*}
  \cd G = \dim \partial X +1.
 \end{align*}
\end{theorem}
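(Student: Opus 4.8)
The plan is to realise $\partial X$ as a $\mathcal{Z}$-boundary for $G$ and then invoke the Bestvina--Geoghegan dictionary between the topological dimension of such a boundary and the cohomological dimension of the group. First I would set up a good model: a geometric action of $G$ on $X$ makes $X$ coarsely equivalent to $G$ with a word metric (Milnor--\v{S}varc), and the existence of a finite model $K$ for $BG$ forces $G$ to be torsion free of type $F$; its universal cover $P:=\widetilde{K}$ is then a finite-dimensional, locally finite, contractible, free, cocompact $G$-complex, in particular an $\mathrm{AR}$, and by Milnor--\v{S}varc again $P$ is coarsely equivalent to $G$, hence to $X$. Using that $\overline{X}=X\cup\partial X$ is compact (\cref{lem:Gromov-cptn}) and that $X$ is coarsely homotopy equivalent to the open cone $\mathcal{O}\partial X$ over its ideal boundary \cite{FO-CCH}, one transports the compactification to a compactification $\overline{P}=P\cup\partial X$ with the same corona. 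The technical heart of this step is to check that $\overline{P}$ is a $\mathcal{Z}$-compactification, i.e.\ that $\partial X$ is a $\mathcal{Z}$-set in $\overline{P}$: for every $\varepsilon>0$ one needs a homotopy of $\overline{P}$ to a map into $P$ moving points by at most $\varepsilon$, and these are produced by pushing radially toward a fixed basepoint along the extended coarsely convex bicombing $\rpgm$. Since the $G$-action preserves all of this, $\partial X$ becomes a $\mathcal{Z}$-boundary for $G$.

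Granting that, the theorem follows from Bestvina's theorem (\emph{Local homology properties of boundaries of groups}): a group $G$ of type $F$ with a $\mathcal{Z}$-boundary $Z$ satisfies $\check{H}^{k}(Z;AG)\cong H^{k+1}(G;AG)$, and with relative versions the cohomological dimensions obey $\dim_{\mathbb{Z}}Z=\cd G-1$, where $\dim_{\mathbb{Z}}$ is cohomological dimension over $\mathbb{Z}$ (supremum over closed subpairs and all coefficient groups); moreover $Z$ is finite dimensional with $\dim Z=\dim_{\mathbb{Z}}Z$. Combining these gives $\dim\partial X=\cd G-1$. Alternatively, and closer to the coarse-geometric language of \cite{FO-CCH}, one can avoid the polyhedral $\mathcal{Z}$-set verification and argue entirely coarsely: coarse invariance of coarse cohomology together with $X\simeq_{\mathrm c}P$ and uniform contractibility of $P$ identifies $HX^{*}(X)$ with $H^{*}(G;\mathbb{Z}G)$, while the Higson--Roe computation for open cones \cite{MR1388312} gives $HX^{n}(\mathcal{O}\partial X)\cong\widetilde{H}^{n-1}(\partial X)$; running the same identifications with arbitrary coefficient systems and relative pairs yields $\cd G=1+\dim_{\mathbb{Z}}\partial X$, and Alexandroff's theorem ($\dim=\dim_{\mathbb{Z}}$ for finite-dimensional compacta) finishes the argument.

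The step I expect to be the main obstacle is exactly the $\mathcal{Z}$-set verification, equivalently making the open-cone and coarse-cohomology identifications sufficiently functorial in the coefficients and in closed subpairs of the corona: one must check that the bicombing-induced radial homotopies extend continuously up to and including $\partial X$ and become uniformly small near $\partial X$, which is precisely the kind of "continuity at infinity" phenomenon developed in \cref{sec:cont-at-infin}, and one must promote the corresponding (co)homological isomorphisms from absolute $\mathbb{Z}$-coefficients to all coefficient systems and closed pairs so that they genuinely compute $\dim_{\mathbb{Z}}\partial X$ rather than merely $\sup\{n:\widetilde{H}^{n}(\partial X;\mathbb{Z})\neq 0\}$. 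Once this is done, finite-dimensionality of $\partial X$ is automatic (it is bounded a posteriori by $\cd G-1<\infty$, using the finite model for $BG$ a second time), so the passage from cohomological to covering dimension is a black box and introduces no further difficulty.
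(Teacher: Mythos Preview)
This theorem is not proved in the present paper: it is quoted as Corollary~8.10 of \cite{FO-CCH} and used as a black box in \cref{sec:relat-betw-cohom}. There is therefore no proof here to compare your proposal against; your sketch is a reasonable reconstruction of the argument in \cite{FO-CCH}, which proceeds via the coarse homotopy equivalence $X\simeq_{\mathrm{c}}\mathcal{O}\partial X$ to identify $H^{*}_{c}(G;\mathbb{Z}G)$ with shifted Alexander--Spanier cohomology of $\partial X$, and the $\mathcal{Z}$-structure formulation you give is an equivalent packaging.

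There is, however, one genuine gap in your final paragraph. You write that finite-dimensionality of $\partial X$ is ``automatic'' because $\dim_{\mathbb{Z}}\partial X\le\cd G-1<\infty$, and then invoke Alexandroff's theorem. This is circular in the wrong direction: Alexandroff's theorem says $\dim=\dim_{\mathbb{Z}}$ for compacta \emph{already known to be finite-dimensional}, but Dranishnikov's solution to the Alexandroff problem produces infinite-dimensional compacta with $\dim_{\mathbb{Z}}=3$, so $\dim_{\mathbb{Z}}<\infty$ alone does not force $\dim<\infty$. You must bound $\dim\partial X$ independently---either because $\partial X$ is a $\mathcal{Z}$-set in the compactification of the finite-dimensional contractible complex $\widetilde{K}$ (a closed subset of a finite-dimensional space), or via the finite asymptotic dimension of $X$ established in \cite{FO-CCH}---and only then apply Alexandroff to pass from $\dim_{\mathbb{Z}}$ to $\dim$.
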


Let $G$ and $H$ be groups acting on geodesic coarsely convex spaces
$X$ and $Y$ respectively. 
Suppose $G$ and $H$ admit finite models for the classifying spaces 
$BG$ and $BH$ respectively. In this case, 
$\dim \partial (X*Y)$ can be computed by \cref{thm:cd-freeprod-grp}
as follows.

By \cref{cor:cohom-dim-FO},
\begin{align*}
 \cd(G) = \dim \partial X +1, \quad \cd(H) = \dim \partial Y +1.
\end{align*}
Since the free product $G*H$ is hyperbolic relative to $\{G,H\}$, 
by \cite[Theorem A.1.]{FO-CCH}, $G*H$ 
admits a finite model for the classifying spaces $B(G*H)$. 
By \cref{ex:freeprod-groups}, $G*H$ acts on $X*Y$ geometrically,
so using \cref{cor:cohom-dim-FO} again,
\begin{align*}
 \cd (G*H) = \dim (\partial(X*Y)) + 1.
\end{align*}
Therefore by \cref{thm:cd-freeprod-grp}, we have
\begin{align*}
 \dim(\partial (X*Y)) = \max\{\dim \partial X, \dim \partial Y\}.
\end{align*}


\bibliographystyle{amsplain} 
\bibliography{math}

\providecommand{\bysame}{\leavevmode\hbox to3em{\hrulefill}\thinspace}
\providecommand{\MR}{\relax\ifhmode\unskip\space\fi MR }
\providecommand{\MRhref}[2]{%
  \href{http://www.ams.org/mathscinet-getitem?mr=#1}{#2}
}
\providecommand{\href}[2]{#2}
\begin{thebibliography}{10}

\bibitem{bigdely2023relative}
Hadi Bigdely and Eduardo Martínez-Pedroza, \emph{Relative dehn functions, hyperbolically embedded subgroups and combination theorems}, Glasgow Mathematical Journal (2023), 1–23.

\bibitem{MR1656031}
Bruce Blackadar, \emph{{$K$}-theory for operator algebras}, second ed., Mathematical Sciences Research Institute Publications, vol.~5, Cambridge University Press, Cambridge, 1998. \MR{1656031 (99g:46104)}

\bibitem{MR1744486}
Martin~R. Bridson and Andr{\'e} Haefliger, \emph{Metric spaces of non-positive curvature}, Grundlehren der Mathematischen Wissenschaften [Fundamental Principles of Mathematical Sciences], vol. 319, Springer-Verlag, Berlin, 1999. \MR{1744486 (2000k:53038)}

\bibitem{Brouwer-Cantorsp}
L.~E.~J. Brouwer, \emph{On the structure of perfect sets of points}, Proc. Koninklijke Akademie van Wetenschappen \textbf{12} (1910), no.~4, 785–794.

\bibitem{Brown-cohom-gr}
Kenneth~S. Brown, \emph{Cohomology of groups}, Graduate Texts in Mathematics, vol.~87, Springer-Verlag, New York, 1994, Corrected reprint of the 1982 original. \MR{1324339 (96a:20072)}

\bibitem{chalopin2020helly-arXiv}
Jérémie Chalopin, Victor Chepoi, Anthony Genevois, Hiroshi Hirai, and Damian Osajda, \emph{Helly groups}, 2020, arXiv:2002.06895.

\bibitem{convex-bicomb}
Dominic Descombes and Urs Lang, \emph{Convex geodesic bicombings and hyperbolicity}, Geom. Dedicata \textbf{177} (2015), 367--384. \MR{3370039}

\bibitem{EzawaFukayaKobeJ}
Yuuhei Ezawa and Tomohiro Fukaya, \emph{Visual maps between coarsely convex spaces}, Kobe J. Math. \textbf{40} (2021), no.~1-2, 7--45.

\bibitem{fukaya2023free}
Tomohiro Fukaya and Takumi Matsuka, \emph{Free products of coarsely convex spaces and the coarse {B}aum-{C}onnes conjecture}, arXiv:2303.13701, to appear in Kyoto J. Math (2023).

\bibitem{gcc-pair-preparation}
Tomohiro Fukaya, Takumi Matsuka, and Eduardo Martínez-Pedroza, \emph{Geodesic coarsely convex group pair}, in preparation.

\bibitem{FO-CCH}
Tomohiro Fukaya and Shin-ichi Oguni, \emph{A coarse {C}artan--{H}adamard theorem with application to the coarse {B}aum--{C}onnes conjecture}, J. Topol. Anal. \textbf{12} (2020), no.~3, 857--895. \MR{4139217}

\bibitem{FOYcont-prod}
Tomohiro Fukaya, Shin-ichi Oguni, and Takamitsu Yamauchi, \emph{Coarse compactifications and controlled products}, J. Topol. Anal. \textbf{14} (2022), no.~4, 875--900. \MR{4523562}

\bibitem{MR1086648}
{\'E}.~Ghys and P.~de~la Harpe (eds.), \emph{Sur les groupes hyperboliques d'apr\`es {M}ikhael {G}romov}, Progress in Mathematics, vol.~83, Birkh\"auser Boston, Inc., Boston, MA, 1990, Papers from the Swiss Seminar on Hyperbolic Groups held in Bern, 1988. \MR{1086648}

\bibitem{MR919829}
M.~Gromov, \emph{Hyperbolic groups}, Essays in group theory, Math. Sci. Res. Inst. Publ., vol.~8, Springer, New York, 1987, pp.~75--263. \MR{919829 (89e:20070)}

\bibitem{MR2448064}
Daniel Groves and Jason~Fox Manning, \emph{Dehn filling in relatively hyperbolic groups}, Israel J. Math. \textbf{168} (2008), 317--429. \MR{2448064 (2009h:57030)}

\bibitem{MR1388312}
Nigel Higson and John Roe, \emph{On the coarse {B}aum-{C}onnes conjecture}, Novikov conjectures, index theorems and rigidity, {V}ol.\ 2 ({O}berwolfach, 1993), London Math. Soc. Lecture Note Ser., vol. 227, Cambridge Univ. Press, Cambridge, 1995, pp.~227--254. \MR{1388312 (97f:58127)}

\bibitem{MR1817560}
\bysame, \emph{Analytic {$K$}-homology}, Oxford Mathematical Monographs, Oxford University Press, Oxford, 2000, Oxford Science Publications. \MR{1817560 (2002c:58036)}

\bibitem{MR0643755}
Y\^{u}kiti Katuta, \emph{On the covering dimension of inverse limits}, Proc. Amer. Math. Soc. \textbf{84} (1982), no.~4, 588--592. \MR{643755}

\bibitem{MR1388297}
John Milnor, \emph{On the {S}teenrod homology theory}, Novikov conjectures, index theorems and rigidity, {V}ol.\ 1 ({O}berwolfach, 1993), London Math. Soc. Lecture Note Ser., vol. 226, Cambridge Univ. Press, Cambridge, 1995, pp.~79--96. \MR{1388297 (98d:55005)}

\bibitem{bdrySytolic}
Damian Osajda and Piotr Przytycki, \emph{Boundaries of systolic groups}, Geom. Topol. \textbf{13} (2009), no.~5, 2807--2880. \MR{2546621}

\bibitem{MR1344138}
Guoliang Yu, \emph{Coarse {B}aum-{C}onnes conjecture}, $K$-Theory \textbf{9} (1995), no.~3, 199--221. \MR{1344138 (96k:58214)}

\end{thebibliography}



\end{document}